\author{Giorgio Cipolloni$^{\dagger\ddagger}$ \and L\'aszl\'o Erd\H{o}s$^{\dagger}$ \and Dominik Schr\"oder$^{\dagger\ast}$}
\address{IST Austria, Am Campus 1, A-3400 Klosterneuburg, Austria}
\email{dschroed@ist.ac.at}
\email{giorgio.cipolloni@ist.ac.at}
\email{lerdos@ist.ac.at} 
\thanks{$^\dagger$Partially supported by ERC Advanced Grant No.~338804}\thanks{$^\ddagger$This project has received funding from the European Union's Horizon 2020 research and innovation programme under the Marie Sk\l odowska-Curie Grant Agreement No. 665385.}\thanks{$^\ast$Current affiliation: ETH Institute for Theoretical Studies}
\subjclass[2010]{60B20, 15B52} 
\keywords{Supersymmetric formalism, Superbosonization, Circular law}
\title[Least singular value of the shifted Ginibre ensemble]{Optimal lower bound on the least singular value of the shifted Ginibre ensemble}
\date{\today}
\begin{document}
\thispagestyle{empty}
\maketitle
  
\begin{abstract}
    We consider the least singular value of a large random matrix with real or complex
    i.i.d.\ Gaussian entries shifted by a constant $z\in \C$. We prove
    an optimal  lower tail  estimate on this singular value in the  critical regime where
    $z$ is around the spectral edge thus improving the classical bound of Sankar, Spielman  and Teng~\cite{MR2255338} for the particular shift-perturbation in the edge regime. Lacking Br\'ezin-Hikami formulas in the real case, we rely on the  superbosonization formula~\cite{MR2430637}.
\end{abstract}

\section{Introduction}
    The effective numerical solvability of a large system of linear equations  \(Ax=b\) is determined by the condition number
    of the matrix \(A\). In many practical applications the norm of \(A\) is bounded and thus the condition number critically
    depends on the smallest singular value \(\sigma_1(A)\) of \(A\).  When the matrix elements of \(A\) come from  noisy measured data, 
    then the lower tail probability of \(\sigma_1(A)\) tends to exhibit a universal scaling behavior, depending on the 
    variance of the noise. In the simplest case \(A\) can be decomposed as 
    \begin{equation}\label{AX}
    A= A_0 + X,
    \end{equation}
     where \(A_0\) is a deterministic  square matrix and \(X\) is drawn from the 
    \emph{Ginibre ensemble}, 
    i.e.~\(X\) has i.i.d.~centred Gaussian matrix elements with variance \(\E \abs{x_{ij}}^2 = N^{-1}\),
    where \(N\) is the dimension. 

    The randomness in \(X\) smoothens out possible singular
    behavior of \(A^{-1}\). In particular Sankar, Spielman and Teng~\cite{MR2255338} showed that the smallest singular value \(\sigma_1(A)\),
     lives on a scale not smaller than \(N^{-1}\), equivalently, the smallest eigenvalue \(\lambda_1(AA^\ast)\) of \(AA^\ast\)  
     lives on a scale \(\le N^{-2}\), i.e.
    \begin{equation}\label{SST}
    \P (\lambda_1(AA^\ast)= [\sigma_1(A)]^2  \le x N^{-2}) \lesssim \sqrt{x}, \quad \text{for any}\quad x>0,
    \end{equation}
    up to logarithmic corrections, uniformly in \(A_0\). If \(X\) is a 
    complex Ginibre matrix, then the \(\sqrt{x}\) bound  improves to \(x\). 
      
The special case \(A_0=0\) shows that the bound~\eqref{SST} is essentially optimal. Indeed, the tail probability of \(\lambda_1(XX^*)\) of real and complex Ginibre ensembles has been explicitly computed by Edelman~\cite{MR964668} as 
    \begin{equation}\label{Gin}
    \lim_{N\to\infty}\P ( \lambda_1(XX^*) \le x N^{-2}) =  \begin{cases}
        1-e^{-x/2-\sqrt{x}}=\sqrt{x}+ \landauO{x} ,& \text{in the real case}\\
        1-e^{-x} = x +\landauO{x^2},& \text{in the complex case.}
    \end{cases} 
    \end{equation}
The complex Ginibre ensemble has a stronger smoothing effect in~\eqref{Gin} is due to the additional degrees of freedom. 
This observation is analogous to the different strength of the level repulsion in  real symmetric and complex Hermitian random matrices.
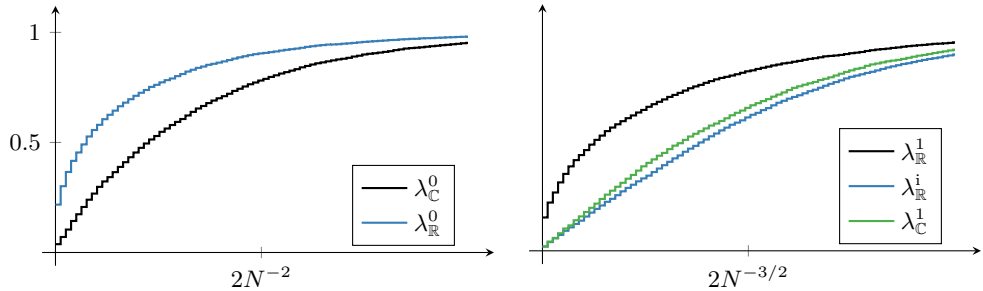
\begin{figure}
    \centering
    \begin{tikzpicture} 
        \pgfplotstableread[col sep = comma]{50.csv}\mydata
        \begin{axis}[
            xmin = -0,
            xmax = 4,
            ymin=0,
            ymax=1,
            width=7cm,height=4.5cm,
            tick align=outside,
            axis x line=middle,
            axis y line=middle,
            axis line style={shorten >=-10pt, shorten <=-5pt},
            legend pos = south east,
            xtick={2},
            xticklabels={$2N^{-2}$},
            ytick={0,0.5,1},
            cycle list/Set1-3,
        ]

        \path[name path=axis] (axis cs:0,0) -- (axis cs:4,0);
         
        \addplot+ [name path=A,const plot,thick,draw=black]  table[x index = {0}, y index = {1}]{\mydata};
        \addlegendentry{$\lambda_{\C}^{0}$};
        \addplot+ [name path=B,const plot,thick] table[x index = {0}, y index = {2}]{\mydata};
        \addlegendentry{$\lambda_{\R}^{0}$};
        \end{axis}
        \end{tikzpicture}\qquad\quad
        \begin{tikzpicture}
            \pgfplotstableread[col sep = comma]{50e.csv}\mydata
            \begin{axis}[
                xmin = -0,
                xmax = 4,
                ymin=0,
                ymax=1,
                width=7cm,height=4.5cm,
                tick align=outside,
                axis x line=middle,
                axis y line=middle,
                axis line style={shorten >=-10pt, shorten <=-5pt},
                legend pos = south east,
                xtick={2},
                xticklabels={$2N^{-3/2}$},
                ytick={0},
                cycle list/Set1-3,
            ]
    
            \path[name path=axis] (axis cs:0,0) -- (axis cs:4,0);
             
            \addplot+ [name path=A,const plot,thick,draw=black]  table[x index = {0}, y index = {1}]{\mydata};
            \addlegendentry{$\lambda_{\R}^{1}$};
            \addplot+ [name path=B,const plot,thick] table[x index = {0}, y index = {2}]{\mydata};
            \addlegendentry{$\lambda_{\R}^{\ii}$};
            \addplot+ [name path=C,const plot,thick] table[x index = {0}, y index = {3}]{\mydata};
            \addlegendentry{$\lambda_{\C}^{1}$};
                     
             \end{axis}
             \end{tikzpicture}
        \caption{Plots of the cumulative histograms of the smallest eigenvalue \(\lambda_{\R,\C}^{z}\) of the matrix \((X-z)(X-z)^\ast\), where \(\R,\C\) indicates whether \(X\) is distributed according to the real or complex Ginibre ensemble. 
                The data was generated by sampling \(5000\) matrices of size \(200\times 200\). The first plot confirms the difference between the \(x\)- and \(\sqrt{x}\)-scaling close to \(0\), see~\eqref{Gin}.  
                The second plot shows that this difference is also observable for shifted Ginibre matrices at the edge \(|z|=1\), 
                but only for real spectral parameters \(z=\pm 1\). When the complex parameter \(z\) is away from the real axis, then the real case behaves similarly
                to the complex case.}\label{fig}
\end{figure}     

The support of the spectrum of such \emph{information plus noise matrices} \(AA^*\) becomes deterministic 
as $N\to\infty$ and it can be computed from the solution of  a certain self-consistent equation~\cite{MR2322123}.
 Almost surely no eigenvalues lie outside the support of the limiting measure~\cite{MR2930382}. 
Thus  $\lambda_1(AA^*)$ has a simple $N$-independent positive lower bound if 0 is away from  this support.
However, when 0 is well inside the limiting spectrum, the smoothing mechanism becomes important 
yielding that $\lambda_1(AA^*)$ is of order $N^{-2}$
with a lower tail given in~\eqref{SST}.  The regime where $0$ is near the edge of  this support is yet unexplored.

The goal of this paper is to study this transitional regime for \(A=X-z\), i.e.~for the important special case where \(A_0=-z\1\) is a constant multiple of the idenity matrix, as the spectral parameter \(z\in\C\) is varied. The limiting density of states of \(Y^z\defeq (X-z)(X-z)^\ast\) is supported in the interval \([0,\ed_+]\) for \(\abs{z}\le 1\) and the interval \([\ed_-,\ed_+]\) with \(\ed_->0\) for \(\abs{z}>1\), where \(\ed_\pm\) are explicit functions of \(\abs{z}\)
given in~\eqref{ed pm}. As noted above, the problem is relatively simple if \(\abs{z}\ge 1+\epsilon\)  with 
some \(N\)-independent \(\epsilon\) as in this case~\cite{MR2930382} 
implies that almost surely \( \lambda_1(Y^z)\ge C(\epsilon)>0\) is bounded away from zero. In the opposite regime, when $\abs{z}\le 1-\epsilon$, then typically \(\lambda_1(Y^z)\sim N^{-2}\), and in fact~\eqref{SST} provides the correct corresponding upper bound (modulo logs).

Our main result on the tail probability of \(\lambda_1(Y^z)\) is that for \(\abs{z}\le 1+ C N^{-1/2}\)
\begin{subequations}
    \begin{equation}\label{our lambda}
        \P \Big(\lambda_1 (Y^z) \le x \cdot c(N,z)\Big)\lesssim \begin{cases}
        x+ \sqrt{x} e^{- \frac{1}{2} N (\Im z)^2},& \text{in the real case}\\
        x,& \text{in the complex case,}
        \end{cases} 
        \end{equation}            
\end{subequations}
where 
\begin{equation}\label{cN def}
c(N,z)\defeq \min \Bigl\{\frac{1}{N^{3/2}}, \frac{1}{N^2 \abs[0]{1-\abs{z}^2}}\Bigr\}.
\end{equation}
Our bound is sharp up to logarithmic corrections, see Corollary~\ref{cor:tailas} for the precise statement. Notice the transition between the \(x\) and \(\sqrt{x}\) behaviour in the real case of~\eqref{our lambda}: near the real axis, \(\abs{\Im z}\ll N^{-1/2}\), the result is analogous to the real case~\eqref{Gin} at \(z=0\), otherwise the complex behaviour~\eqref{Gin} dominates at the edges even for real \(X\), see Fig.~\ref{fig}. These results reveal how the robust bound~\eqref{SST} improves near the spectral edge in the transition regime  \(- CN^{-1/2} \le 1-\abs{z} \ll 1\) in both symmetry classes. The transition to the Tracy-Widom scaling in the regime well outside of the spectrum \(\abs{z}-1 \gg N^{-1/2}\) is deferred to our future work.

One motivation for studying \(X- z\) is the classical ODE model \(\diff u/\diff t = (X-z)u\)  on the stability of large biological networks by May~\cite{4559589}. 
For example, the matrix elements $x_{ij}$ may express  random connectivity rates between neurons and $z$ is the overall decay rate
of neuron activation~\cite{10039285}. As $\Re z$ crosses 1, there is a fine phase transition in the large time behavior of $u$ that depends on whether $X$ is real or complex Ginibre matrix, see~\cite{PhysRevLett.81.3367} and~\cite{MR3816180} for the recent mathematical results, as well as for further references.
Another important motivation is that an effective lower tail bound on the least singular value of $X-z$ is essential for 
the proof of the circular law via Girko's formula, see~\cite{MR2908617} for a detailed survey. In fact, this is the most delicate ingredient in any proof concerning eigenvalue distribution of large non-Hermitian matrices. In particular, relying on the main result of 
the current paper, we proved~\cite{1908.00969} that the local eigenvalue statistics for  random matrices with centered i.i.d.~entries near the spectral edge asymptotically coincide  with those for the corresponding Ginibre ensemble as \(N\to\infty\). This is the non-Hermitian analogue of the celebrated Tracy-Widom edge  universality for Wigner matrices~\cite{MR1727234,MR3253704}. Similarly, the singular value bound from the present paper is also an important ingredient for the recent CLTs for complex and real i.i.d.\ matrices~\cite{1912.04100,2002.02438}.

We now give a brief history of related results. In the \( z=0\) case tail estimates for \(\lambda_1(XX^*)\) beyond the Gaussian distribution 
 have been subject of intensive research~\cite{MR2434885,MR2480613} eventually obtaining~\eqref{Gin} with an additive \(\landauO{e^{-cN}}\) error term for any \(X\) with i.i.d.\ entries with subgaussian tails in~\cite{MR2407948}.
 The precise distribution of \(\lambda_1(XX^*)\) was shown in~\cite{MR2647142}  to coincide with the Gaussian case~\eqref{Gin} under a bounded high moment condition
 and with an  \(\landauO{N^{-c}}\) error  term, see also~\cite{MR3916329,1908.04060} for more general ensembles. In the case of general \(A_0\) lower bounds on \(\lambda_1(AA^*)\) in the non-Gaussian setting have been obtained in~\cite{MR2402448,MR2684367}, albeit not uniformly in \(A_0\),
  see also~\cite{MR3857860,1707.09656} beyond the i.i.d.\ case. We are not aware of any previous results improving~\eqref{SST} in the transitional regime~\eqref{our lambda}. 

Since we consider Ginibre (i.e.\ purely Gaussian) ensembles, one might think that everything is explicitly computable from the well understood spectrum of \(X\). The eigenvalue density of \(X\) converges to the uniform distribution on the unit disk and the spectral radius of X converges to 1 (these results have also been
established for the general non-Gaussian case, cf.~Girko's circular law~\cite{MR773436,MR1428519,MR2409368,MR866352,MR863545,MR3813992}). Also the joint probability density function of all Ginibre eigenvalues, as well as their local correlation functions are explicitly known; see~\cite{MR173726} and~\cite{MR0220494} for the relatively simple complex case, and~\cite{MR1121461,MR1437734,MR2530159,17930739} for the more involved real case, where the appearance of \(\sim N^{1/2}\) real eigenvalues causes a singularity in the
local density. However, eigenvalues of \(X\) give no direct information  on the singular values of \(X-z\)
and the extensive literature on the Ginibre spectrum is not applicable. Notice that any intuition based upon the eigenvalues of \(X\) is misleading: the nearest eigenvalue to \(z\) is at a distance
of order \(N^{-1/2}\) for any \(\abs{z}\le 1\). However, \(\norm{(X-z)}^{-1}\sim \max\{N^{3/4},N\abs[0]{1-\abs{z}^2}^{1/2}\}\) for \(\abs{z}\le  1+C N^{-1/2}\),  
as a consequence of our result~\eqref{our lambda}. This is an indication that typically \(X\) is highly non-normal (another indication
is that the largest singular value of \(X\) is \(2\), while its spectral radius is only \(1\)).  
    
Regarding our strategy, in this paper we 
use supersymmetric methods to express the resolvent of \(Y^z\). In particular, we use a multiple Grassmann integral formula for
\begin{equation}\label{res}
    \varrho_N^z(E)\defeq \frac{1}{N\pi } \Im \E \Tr \frac{1}{Y^z- E + \ii 0},
\end{equation}
the averaged density of states (or one-point function) of \(Y^z\) at energy \(E\in\R\). For  \(\abs{E} \lesssim c(N,z)\) a sizeable contribution to~\eqref{res} comes from the lowest eigenvalue \(\lambda_1(Y^z)\), hence a good upper estimate on~\eqref{res} translates into a lower tail bound on \(\lambda_1(Y^z)\).
    
With the help of the superbosonization formula by Littelmann, Sommers and Zirnbauer~\cite{MR2430637}, we can drastically reduce the number of integration variables: instead of \(N\) bosonic and \(N\) fermionic variables we will have an explicit expression for~\eqref{res} involving merely two contour integration  variables in complex case and  three in the real case. The remaining integrals are
still highly oscillatory, but contour deformation allows us to estimate them optimally.
In fact, saddle point analysis  identifies the leading term as long as \(\abs{E}\gg c(N, z)\). However, in the
critical  regime, \(\abs{E}\lesssim c(N, z)\), the saddle point analysis breaks down.
The leading term is extracted as a specific rescaling of a universal function given by a double integral.
We work out the precise answer for~\eqref{res} in the complex case and we provide optimal bounds in the real case, deferring the precise asymptotics to further work.
    
Lower tail estimates require delicate knowledge about individual eigenvalues, i.e.~about the density of states
below the scale of eigenvalue spacing,
and it is crucial to exploit  the Gaussianity of \(X\) via explicit formulas. There are essentially three methods: (i) orthogonal polynomials, (ii) Br\'ezin-Hikami contour integration formula~\cite{MR1662382} and (iii) supersymmetric formalism. We are not aware of any orthogonal polynomial approach to analyse  \(Y^z = (X-z)(X-z)^*\) in the real case
(see~\cite{MR2250019} in the complex case and~\cite{MR2969495}
for rank-1 perturbation of real \(X\)).
In the complex case, the ensemble \(Y^z\) has also
been extensively investigated by the Br\'ezin-Hikami formula in~\cite{MR2162782}, where even  the determinantal correlation kernel was computed as a double integral involving the Bessel kernel, see also~\cite{MR1419177,MR1413913} 
for a derivation via the supersymmetric version of the Itzykson-Zuber formula. Although the paper~\cite{MR2162782} did not analyse the resulting  
one point function, well known asymptotics for the
Bessel function may be used to rederive our bounds and asymptotics on~\eqref{res}, as well as~\eqref{our lambda}, from~\cite[Theorem 7.1]{MR2162782},
see Appendix~\ref{app} for more details. 
For the real case, however, there is no analogue of the Br\'ezin-Hikami formula. 
     
Therefore, in this paper we explore the last option, the
supersymmetric approach, that is available for both symmetry classes, albeit the real case is
considerably more involved. Our main tool is the powerful superbosonization formula~\cite{MR2430637}
followed by a delicate multivariable contour integral analysis.
We remark that, alternatively, one may also use  the Hubbard-Stratonovich transformation, e.g.~\cite[Proposition~1]{MR4001834} where correlation functions, i.e.\ expectations of \emph{products}  of characteristic polynomials of \(X\) were computed in this way. Note, however,  that the density of states~\eqref{res} requires to analyse \emph{ratios} of determinants, a technically much
more demanding task. While explicit formulas can be obtained with both methods (see~\cite{MR2975518} and especially~\cite{MR2512124} for an explicit comparison), the subsequent analysis seems to be more feasible with the formula obtained from the superbosonisation approach, as our work demonstrates. 
 
Supersymmetry is a compelling method originated in physics~\cite{MR1628498,MR2932627,MR820690} to produce surprising identities related to random matrices whose potential has not yet been fully exploited
in mathematics. It has been especially successful in deriving rigorous result on  Gaussian random band matrices~\cite{MR3627427,MR3665217,1810.13150,MR1942858,1905.08136,MR3196980,MR3623245,MR3541181,1910.02999}, sometimes  even beyond the Gaussian case~\cite{MR2842968,MR3055265,MR2932643}, as well as on overlaps of non-Hermitian Ginibre eigenvectors~\cite{MR3851824}. We also mention
 the  recent results in~\cite{MR3851824} and~\cite{MR3780342} as examples of a remarkable interplay between supersymmetric and orthogonal polynomial techniques in the theory of Ginibre and related matrices.

The main object of our work, the  Hermitian block random matrix
\begin{equation}\label{Hdef}
H=H^z\defeq \begin{pmatrix} 
    0 & X -z\\
    X^\ast-\bar{z} & 0
\end{pmatrix} 
\end{equation}
arose in the physics literature as a chiral random matrix model for massless Dirac operator,
introduced by Stephanov in~\cite{10061300}.
Typically,  instead of $z$ and $\bar z$, both shift parameters are chosen equal $z$ (interpreted as $\ii$-times the chemical potential)
so that the corresponding $H$  is not self-adjoint; this model has been extensively investigated by
both supersymmetric and orthogonal polynomial techniques, see e.g.~\cite{10053982,MR2128558,PhysRevD.57.6486,MR1488590,OSBORN1999317}.
 However, in the special case when $z$ is real, our $H^z$ as given in~\eqref{Hdef}
 coincides with Stephanov's model where $z$ 
 can be interpreted as temperature (or Matsubara frequency), see~\cite[Section~6.1]{doi:10.1146/annurev.nucl.50.1.343}.

\subsubsection*{Acknowledgement} The authors are grateful to Nicholas Cook and Patrick Lopatto for pointing out missing references, and to Ievgenii Afanasiev for useful remarks. We would also like to thank the anonymous referees for drawing our attention to additional references in the physics literature.

\section{Model and main results}
We consider the model \(Y=Y^z=(X-z)(X^\ast-\overline{z})\) with a fixed complex parameter \( z\in \C\) and  with
a random matrix  \(X\in\C^{N\times N}\) having independent real or complex Gaussian entries \(x_{ab}\sim\mathcal{N}(0,N^{-1})\), where in the complex case we additionally assume \(\E x_{ab}^2=0\). Note that \(Y\) is related to the block matrix~\eqref{Hdef}
through its resolvent via 
\begin{equation} \frac{\Tr (H-\sqrt{w})^{-1}}{2\sqrt{w}}=\Tr (Y-w)^{-1}, \quad \Re w>0, \; \Im w > 0,
\label{eq H Y cor}\end{equation}
where the branch of \(\sqrt{w}\) is chosen such that \(\Im \sqrt{w} >0\).
It is well known that in the large \(N\) limit the normalized trace of the resolvent of many  random matrix ensembles
 becomes deterministic and it satisfies an algebraic equation, the matrix Dyson equation (MDE)~\cite{MR3916109}.
  In the current case of i.i.d.~entries the MDE reduces to a simple cubic  scalar equation
\begin{equation} \frac{1}{m_{H^z}}+(w+m_{H^z})-\frac{\abs{z}^2}{w+m_{H^z}}=0,\quad\Im m_{H^z}(w)>0,\quad\Im w>0 
\label{eq MDE}\end{equation}
that has a unique solution, denoted by \(m_{H^z}\). The local law from~\cite{1907.13631} asserts that 
\begin{equation}\frac{1}{2N}\Tr (H^z-w)^{-1}=  m_{H^z}(w)+\mathcal{O}_\prec\bigl((N\Im w)^{-1}\bigr), \label{local law}
\end{equation} where \(\mathcal{O}_\prec\) denotes a suitable concept of high-probability error term. Together with~\eqref{eq H Y cor} it follows that the normalized trace of the
   resolvent \( (Y^z-w)^{-1}\) of \(Y^z\) is well approximated \[\frac{1}{N}\Tr (Y^z-w)^{-1}\approx m^z(w)\] 
   by the unique solution \(m=m^z=m_{Y^z}\) to the equation 
\begin{equation}  \frac{1}{m^z} + w(1+m^z) - \frac{\abs{z}^2}{1+m^z}=0, \quad \Im m^z(w)>0, \quad \Re w >0, \; \Im w>0, \label{MDE Y}\end{equation}
which is given by \( m^z(w)= m_{H^z}(\sqrt{w})/\sqrt{w}\). Since \(m\) approximates the trace of the resolvent, the density of states is obtained as the imaginary part of the continuous extension of \(m\)
to the real line, i.e.~\(\varrho_\#(E) = \pi^{-1}\lim_{\epsilon\to 0+}\Im m_\#(E+\ii \epsilon) \) for both choices \( \# = H^z, Y^z \).
 For \(\delta\defeq 1-\abs{z}^2\approx 0\) the Stieltjes transform \(m_{H^z}\) and its density of states exhibit
  a cusp formation at \(w=0\) as $\delta$ crosses the value 0. This cusp formation in 
\(H^z\) implies an analogous transition for \( m^z\); the corresponding density of states are depicted in Figure~\ref{fig densities}. 

\begin{figure}
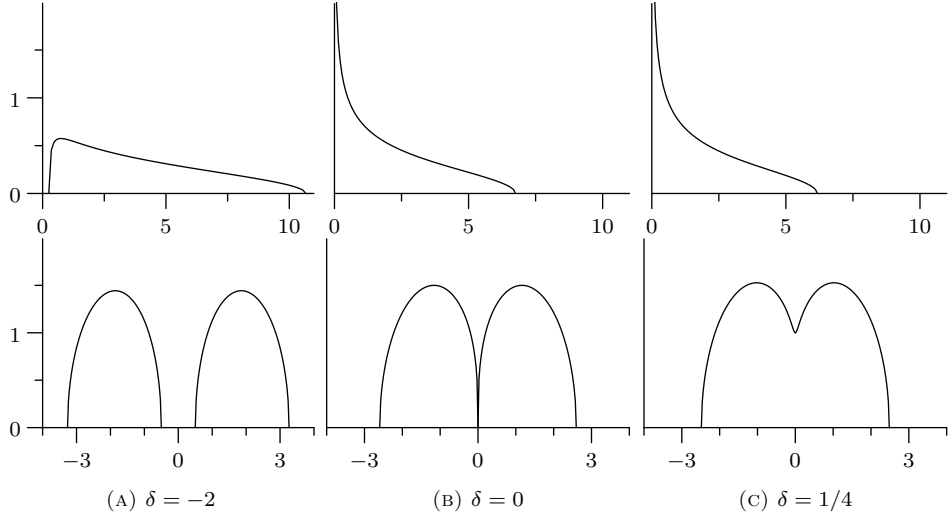
 
    \centering
    \begin{subfigure}[b]{.33\linewidth}
        \centering
        \begin{asy}
            size(4cm,3cm,IgnoreAspect);
            var delta = -2;
            real cr(real x) { return abs(x)^(1/3)*sgn(x); }
            real p(real e) { return delta/3/e - 1/9 ; }
            real q(real e) { return delta/3/e + 1/27 -1/2/e; }
            pair m(real e) { return -2/3 + exp(2*pi*I/3)*cr(q(e)+sqrt(q(e)^2+p(e)^3))+exp(-2*pi*I/3)*cr(q(e)-sqrt(q(e)^2+p(e)^3)); }
            real imm(real e) {return ypart(m(e));}
            draw(graph(imm,.25,32/3));
            xaxis(Bottom,xmin=0,xmax=11,RightTicks(Step=5,step=2.5));
            yaxis(Left,0,1.99,LeftTicks(Step=1,step=.5));
            xlimits(0,11);
            ylimits(-.6,2,Crop);
        \end{asy}  
    \end{subfigure}%
    \begin{subfigure}[b]{.33\linewidth}
        \centering
        \begin{asy}
            size(4cm,3cm,IgnoreAspect);
            var delta = 0;
            real cr(real x) { return abs(x)^(1/3)*sgn(x); }
            real p(real e) { return delta/3/e - 1/9 ; }
            real q(real e) { return delta/3/e + 1/27 -1/2/e; }
            pair m(real e) { return -2/3 + exp(2*pi*I/3)*cr(q(e)+sqrt(q(e)^2+p(e)^3))+exp(-2*pi*I/3)*cr(q(e)-sqrt(q(e)^2+p(e)^3)); }
            real imm(real e) {return ypart(m(e));}
            draw(graph(imm,.01,27/4));
            xaxis(Bottom,xmin=0,xmax=11,RightTicks(Step=5,step=2.5));
            yaxis(Left,0,1.99);
            xlimits(0,11);
            ylimits(-.6,2,Crop);
        \end{asy}  
    \end{subfigure}%
    \begin{subfigure}[b]{.33\linewidth}
        \centering
        \begin{asy}
            size(4cm,3cm,IgnoreAspect);
            var delta = .25;
            real cr(real x) { return abs(x)^(1/3)*sgn(x); }
            real p(real e) { return delta/3/e - 1/9 ; }
            real q(real e) { return delta/3/e + 1/27 -1/2/e; }
            pair m(real e) { return -2/3 + exp(2*pi*I/3)*cr(q(e)+sqrt(q(e)^2+p(e)^3))+exp(-2*pi*I/3)*cr(q(e)-sqrt(q(e)^2+p(e)^3)); }
            real imm(real e) {return ypart(m(e));}
            draw(graph(imm,.01,6.17004));
            xaxis(Bottom,xmin=0,xmax=11,RightTicks(Step=5,step=2.5));
            yaxis(Left,0,1.99);
            xlimits(0,11);
            ylimits(-.6,2,Crop);
        \end{asy}  
    \end{subfigure}\\
    \begin{subfigure}[b]{.33\linewidth}
        \centering
        \begin{asy}
            size(4cm,3cm,IgnoreAspect);
            var delta = -2;
            real cr(real x) { return abs(x)^(1/3)*sgn(x); }
            real p(real e) { return delta/3/e - 1/9 ; }
            real q(real e) { return delta/3/e + 1/27 -1/2/e; }
            pair m(real e) { return -2/3 + exp(2*pi*I/3)*cr(q(e)+sqrt(q(e)^2+p(e)^3))+exp(-2*pi*I/3)*cr(q(e)-sqrt(q(e)^2+p(e)^3)); }
            real imm(real e) {return ypart(m(e));}
            real imm2(real e) {return 2*imm(e^2)*abs(e); }
            draw(graph(imm2,sqrt(.25),sqrt(32/3)));
            draw(graph(imm2,-sqrt(32/3),-sqrt(.25)));
            xaxis(Bottom,xmin=-4,xmax=4,RightTicks(Step=3,step=1));
            yaxis(Left,0,1.99,LeftTicks(Step=1,step=.5));
            xlimits(-4,4);
            ylimits(-.6,2,Crop);
        \end{asy}  
    \caption{\(\delta=-2\)}
    \end{subfigure}%
    \begin{subfigure}[b]{.33\linewidth}
        \centering
        \begin{asy}
            size(4cm,3cm,IgnoreAspect);
            var delta = 0;
            real cr(real x) { return abs(x)^(1/3)*sgn(x); }
            real p(real e) { return delta/3/e - 1/9 ; }
            real q(real e) { return delta/3/e + 1/27 -1/2/e; }
            pair m(real e) { return -2/3 + exp(2*pi*I/3)*cr(q(e)+sqrt(q(e)^2+p(e)^3))+exp(-2*pi*I/3)*cr(q(e)-sqrt(q(e)^2+p(e)^3)); }
            real imm(real e) {return ypart(m(e));}
            real imm2(real e) {return 2*imm(e^2)*abs(e); }
            draw(graph(imm2,0.0000001,sqrt(27/4)));
            draw(graph(imm2,-sqrt(27/4),-0.0000001));
            xaxis(Bottom,xmin=-4,xmax=4,RightTicks(Step=3,step=1));
            yaxis(Left,0,1.99);
            xlimits(-4,4);
            ylimits(-.6,2,Crop);
        \end{asy}  
    \caption{\(\delta=0\)}
    \end{subfigure}%
    \begin{subfigure}[b]{.33\linewidth}
        \centering
        \begin{asy}
            size(4cm,3cm,IgnoreAspect);
            var delta = .25;
            real cr(real x) { return abs(x)^(1/3)*sgn(x); }
            real p(real e) { return delta/3/e - 1/9 ; }
            real q(real e) { return delta/3/e + 1/27 -1/2/e; }
            pair m(real e) { return -2/3 + exp(2*pi*I/3)*cr(q(e)+sqrt(q(e)^2+p(e)^3))+exp(-2*pi*I/3)*cr(q(e)-sqrt(q(e)^2+p(e)^3)); }
            real imm(real e) {return ypart(m(e));}
            real imm2(real e) {return 2*imm(e^2)*abs(e); }
            draw(graph(imm2,0.0000001,sqrt(6.17004)));
            draw(graph(imm2,-sqrt(6.17004),-0.0000001));
            xaxis(Bottom,xmin=-4,xmax=4,RightTicks(Step=3,step=1));
            yaxis(Left,0,1.99);
            xlimits(-4,4);
            ylimits(-.6,2,Crop);
        \end{asy}  
    \caption{\(\delta=1/4\)}
    \end{subfigure}%
    \caption{Density of states of \(Y^z\) and \(H^z\) around the cusp formation. The top and bottom figures show a plot of the boundary value of 
    \(\Im m^z= \Im m_{Y^z}\) and \( \Im m_{H^z}\), respectively on the real line.}\label{fig densities}
\end{figure}

\subsection*{Complex case}
Our main result of the present paper in the complex case is an asymptotic double-integral formula for \(\E \Tr (Y-w)^{-1}\) at \(w=E+\ii 0\), \( E\ge 0\).
 In the transitional regime it is convenient to introduce the rescaled variables 
\begin{equation}\label{rescale}
\lambda\defeq E/c(N), \qquad \widetilde\delta\defeq N^{1/2}\delta, \qquad \mbox{where} \quad \delta\defeq 1-\abs{z}^2,
\end{equation}
recalling that $ c(N)=c(N,z)$ was  defined in~\eqref{cN def}. 
For \(r\ge0\) let \(\Psi=\Psi(r)\) be the unique solution to the cubic equation \(1+r \Psi+\Psi^3=0\) with \(\Re\Psi,\Im\Psi>0\). It is easy to see that \(\Psi(r)\) satisfies \(\Psi(0)=e^{\ii\pi/3}\) and \(\Psi(r)\sim\ii\sqrt{r}\) for \(r\gg1\). We also introduce the notations \(a\wedge b\defeq \min\{ a, b\}\)
and  \(a\vee b\defeq \max\{ a, b\}\) for real numbers $a,b$.
\begin{theorem}[Asymptotic \(1\)-point function in the complex case]\label{thm cplx}
Uniformly in \(\widetilde\delta\ge - C\) and \( 0\le  \lambda\le C\) for some fixed large constant \(C>0\) we have   
\begin{subequations}
\begin{equation}
\label{eq:onepointest}
\begin{split}
\E\Tr(Y-\lambda\cdot c(N,\widetilde\delta)-\ii0)^{-1}&= \frac{1}{2\pi\ii} \frac{N^{3/2}}{\widetilde{z}_\ast}\int\diff x\oint\diff y\;  e^{h(y)-h(x)} \widetilde{H}(x,y) \\
&\qquad+ \landauO{ N(1\vee \widetilde{\delta}) \bigl(1+\abs{\log \lambda}\bigr)},
\end{split}
\end{equation}
where 
\begin{equation}
\label{eq:newfunone}
\begin{split}
\widetilde{H}(x,y)&\defeq{}\frac{1}{x^3}+\frac{1}{x^2 y}+\frac{1}{xy^2}+\frac{\widetilde{\delta}\widetilde{z}_*}{xy}+\frac{\widetilde{\delta}\widetilde{z}_*}{x^2},\quad h(x)\defeq -(1\wedge \widetilde\delta^{-1}) \lambda  \widetilde{z}_* x+ \frac{\widetilde{\delta}}{x \widetilde{z}_\ast}+\frac{1}{2x^2\widetilde{z}_*^2},\\ 
\widetilde{z}_\ast &\defeq \lambda^{-1/3} \bigl(1\vee\widetilde\delta^{1/3}\bigr) \abs{\Psi\bigl(\widetilde\delta \lambda^{-1/3} (1\vee\widetilde{\delta}^{1/3})\bigr)}, \qquad
c(N,\widetilde \delta) = N^{-3/2}\cdot (1\wedge \widetilde\delta^{-1}) ,
\end{split}
\end{equation}
and where the $x$-integration is over any contour from $0$ to $e^{3\ii \pi/4}\infty$, going out from $0$ in the direction of the positive real axis, and the $y$-integration is over any contour around $0$ in a counter-clockwise direction. Moreover, in the regime $\lambda\ll 1$ we have the bound
\begin{equation}
\label{eq:asymp}
\abs{\frac{1\wedge \widetilde{\delta}^{-1}}{\widetilde{z}_\ast}\int\diff x\oint\diff y\;  e^{h(y)-h(x)} \widetilde{H}(x,y)} \lesssim
\begin{cases}
\abs{\log \lambda} ,&   \lambda \ge \widetilde{\delta}^3, \\
\abs[0]{\log \lambda \widetilde{\delta}} ,&  \lambda < \widetilde{\delta}^3.
\end{cases}
\end{equation}
\end{subequations}
\end{theorem}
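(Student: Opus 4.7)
My plan is to follow the supersymmetric strategy outlined in the introduction. The starting point is the standard supersymmetric identity for the trace of the resolvent of $Y=Y^z$ (or equivalently, via~\eqref{eq H Y cor}, of $H^z$) as a derivative of a ratio of determinants,
\[
\E\Tr(Y-w)^{-1} = -\partial_{w'}\big|_{w'=w}\E\frac{\det(Y-w')}{\det(Y-w)}.
\]
The numerator determinant is represented as a Gaussian integral over Grassmann variables and the denominator as a Gaussian integral over complex bosons. Since $X$ has i.i.d.\ centred complex Gaussian entries with $\E|x_{ab}|^2=N^{-1}$ and $\E x_{ab}^2=0$, the expectation over $X$ is evaluated in closed form, leaving a quartic self-coupling of the super-vectors that factors through a pair of supermatrix bilinears of rank one in the boson and fermion sectors respectively.

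The second step is to apply the superbosonization formula of Littelmann, Sommers and Zirnbauer~\cite{MR2430637} to dualise these bilinears. In the complex case the associated supergroup has rank $(1\vert 1)$, so the formula reduces the $2N$-dimensional superintegral to two scalar contour integrals: a bosonic variable $x$ on a ray from $0$ to $\infty$ and a Cauchy-type fermionic variable $y$ on a small loop around $0$, matching the contour prescription of~\eqref{eq:onepointest}. After performing the $w'$-derivative this yields an exact representation
\[
\E\Tr(Y-w)^{-1} = \frac{C_N}{2\pi\ii}\int\diff x\oint\diff y\; e^{NF(x,y,w)}P_N(x,y,w),
\]
with an explicit phase $F$ and polynomial amplitude $P_N$. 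The saddle points of $F$ in $x$ coincide with the solutions of the cubic MDE~\eqref{MDE Y} for $m^z$, and away from the transitional regime standard steepest descent along the contours recovers the local law~\eqref{local law}.

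The heart of the argument is the asymptotic evaluation in the critical regime $\widetilde\delta\ge -C$, $0\le\lambda\le C$. Here the two relevant saddles of $F$ merge at the spectral edge, coalescing to a single third-order saddle at the cusp $\widetilde\delta=0$, so standard steepest descent degenerates. I would rescale $x\mapsto \widetilde z_\ast x/\sqrt N$, $y\mapsto \widetilde z_\ast y/\sqrt N$---the scale dictated by the cubic cusp behaviour of $m^z$ encoded in $\Psi$---and Taylor expand $NF$ about the critical point for $w=\lambda\cdot c(N,\widetilde\delta)$. The leading exponent becomes precisely $h(y)-h(x)$ from~\eqref{eq:newfunone}: the linear term $-(1\wedge\widetilde\delta^{-1})\lambda\widetilde z_\ast x$ encodes the spectral parameter, the term $\widetilde\delta/(x\widetilde z_\ast)$ comes from the $\abs{z}^2=1-\delta$ deformation in~\eqref{MDE Y}, and the quadratic $1/(2x^2\widetilde z_\ast^2)$ is the signature of the merging saddles. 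The polynomial amplitude $P_N$ combines with the Jacobian of the rescaling to produce the overall factor $N^{3/2}/\widetilde z_\ast$ and the rational function $\widetilde H(x,y)$ at leading order. Tracking the Taylor remainders together with uniform Gaussian tail bounds on the rescaled contours yields the additive error $\landauO{N(1\vee\widetilde\delta)(1+\abs{\log\lambda})}$; I expect this uniform control across the merger of the two saddles---in particular the crossover between the $\lambda\gtrsim\widetilde\delta^3$ and $\lambda\ll\widetilde\delta^3$ regimes---to be the main technical difficulty.

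Finally, the bound~\eqref{eq:asymp} on the universal double integral is independent of the supersymmetric derivation and follows from optimal contour deformation. Choosing $\abs{x}\sim 1\vee\widetilde\delta^{1/2}$ on its ray and $\abs{y}$ of comparable size on its loop, the phase $\Re(h(y)-h(x))$ stays uniformly bounded. The singular $1/x^3$ term in $\widetilde H$ then produces the logarithmic divergence $\abs{\log\lambda}$ at the $x\approx 0$ end of the contour, where the outer cut-off is supplied either by the oscillatory tail $-(1\wedge\widetilde\delta^{-1})\lambda\widetilde z_\ast x$ (when $\lambda\ge\widetilde\delta^3$) or by the term $\widetilde\delta/(x\widetilde z_\ast)$ in $h$ (when $\lambda<\widetilde\delta^3$), the latter replacing $\abs{\log\lambda}$ by $\abs{\log(\lambda\widetilde\delta)}$; the remaining terms of $\widetilde H$ give only bounded contributions once the overall $\widetilde z_\ast^{-1}$ is pulled out.
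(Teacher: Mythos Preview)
Your overall architecture matches the paper: the exact two-variable superbosonization identity, the rescaling by $\abs{z_\ast}=N^{1/2}\widetilde z_\ast$, and the identification of $h,\widetilde H$ as the large-$N$ limit of the phase and amplitude. Two points, however, deserve correction.

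First, the expansion is not a Taylor expansion about the critical point $x_\ast$ but a Laurent expansion of $f$ and $G$ at $\abs{x},\abs{y}\to\infty$; this is what produces the simple form $g(x)=-Ex+\delta/x+1/(2x^2)$ that rescales exactly to $h$. The paper first localises to $\abs{x},\abs{y}\ge N^{\rho}$ (showing the complementary region is exponentially small) and then expands at infinity.

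Second, and more importantly, your phrase ``uniform Gaussian tail bounds on the rescaled contours'' mislocates the difficulty. After rescaling, the phase $h$ is $O(1)$, so there is no large-deviation decay to exploit. The actual mechanism is a term-by-term control of $\int_\Lambda\int_\Gamma e^{N(f(y)-f(x))}x^{-a}y^{-b}\,\diff x\,\diff y$ for each monomial in the expansion. The crucial point---explicitly flagged in the paper---is that for $b=0,1$ (which includes the leading terms $1/x^3$ and $\delta/x^2$ of $yH$) it is \emph{not} affordable to put absolute values inside the $y$-integral: the naive bound on $\int_{\Gamma}e^{Nf(y)}\diff y$ is $\abs{z_\ast}\sim N^{1/2}\widetilde z_\ast$, which for $\lambda\ll 1$ carries an unwanted $\lambda^{-1/3}$ factor and would spoil the claimed error $\landauO{N(1\vee\widetilde\delta)(1+\abs{\log\lambda})}$. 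One has to exploit cancellation between the vertical segment $\Gamma_1$ and the circular arc $\Gamma_2$ of the specific $y$-contour to get the improved bounds $\int_\Gamma e^{Nf(y)}\diff y=\landauO{N^{1/2}\vee N\delta}$ and $\int_\Gamma e^{Nf(y)}y^{-1}\diff y=\landauO{1}$. Your plan does not anticipate this, and without it the error analysis would fail.

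For~\eqref{eq:asymp} your direct steepest-descent argument on the rescaled integral is a reasonable alternative to the paper's route (which reads~\eqref{eq:asymp} off from the same monomial bounds applied to the leading $H$-terms). One inaccuracy: the logarithm does not come from $x\approx 0$---there $e^{-h(x)}$ contains $e^{-1/(2x^2\widetilde z_\ast^2)}$, which kills the $1/x^3$ singularity---but from the range $\widetilde z_\ast^{-1}\lesssim\abs{x}\lesssim 1$, where the phase is essentially flat and $\int\diff x/\abs{x}$ produces $\log\widetilde z_\ast\sim\abs{\log\lambda}$.
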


We stated \eqref{eq:onepointest}--\eqref{eq:asymp} only for $\lambda>0$, but in a very similar way we can prove that
\[
\E\Tr(Y+\lambda\cdot c(N,\widetilde\delta))^{-1}\lesssim N^{3/2}(1\vee |\widetilde{\delta}|)\cdot \begin{cases}
\abs{\log \lambda} ,&   \lambda \ge \widetilde{\delta}^3, \\
\abs[0]{\log \lambda \widetilde{\delta}} ,&  \lambda < \widetilde{\delta}^3,
\end{cases}
\]
uniformly in $0\le \lambda\le C$ and \(\widetilde\delta\ge - C\), for some fixed large constant $C>0$.

In the regime above the eigenvalue scaling, i.e.~for \(\lambda\gg 1\), the analogue of Theorem~\ref{thm cplx}  reduces essentially to the local law asymptotics~\eqref{local law}, albeit with a better error term due to the presence of the expectation. 
\begin{proposition}\label{prop local law} Let $Y^z=(X-z)(X-z)^*$ where $X$ is a complex Ginibre ensemble. Then, uniformly in \(\delta\defeq 1-\abs{z}^2\) and \(E\ge 0\), we have the asymptotic expansion in \(E_\pm
\defeq E-\ed_\pm\),
\begin{equation}
    \label{eq:fineqsaddle}\begin{split}
    &\E \Tr (Y^z-E -\ii 0)^{-1} = N m^z(E+\ii 0)\\
    &\qquad\times\Biggl(1+\landauO{\frac{1}{N\abs{E_+}^{3/2}}+\frac{1}{N E^{2/3}}\wedge\Bigl( \frac{\1_{\delta\ge 0}}{N E^{1/2}\delta^{1/2} }+ \frac{\1_{\delta<0}}{N\abs{E_-}^{3/2} \abs{\delta}^{5/2}} \Bigr)}\Biggr).\end{split}
    \end{equation}
where the edges \(\ed_\pm\) of \(\Im m^z\) are explicit functions of \(\delta\) given in~\eqref{ed pm}.
\end{proposition}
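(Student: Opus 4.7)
The plan is to apply a classical saddle point analysis to the same superbosonization-based double contour integral representation of \(\E \Tr(Y^z - E - \ii 0)^{-1}\) that underlies Theorem~\ref{thm cplx}, but in the regime \emph{above} the eigenvalue scale \(c(N,\widetilde\delta)\). For \(\lambda \gg 1\) the phase function in the integrand, written schematically as \(\int e^{N S(x,y)} F(x,y) \diff x \diff y\), has well-separated saddle points, and the critical point equation for \(S\) is equivalent (after a natural change of variables) to the scalar MDE~\eqref{MDE Y}. Hence the dominant saddle is located precisely at \(m^z(E+\ii 0)\), which produces the leading term \(N m^z(E+\ii 0)\) in~\eqref{eq:fineqsaddle}.

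Concretely, I would first write the exponent \(NS(x,y)\) explicitly from the superbosonization formula derived for Theorem~\ref{thm cplx} and verify that its stationary point equations reduce to~\eqref{MDE Y}. I would then deform the \(x\)-contour (open, from \(0\) to \(e^{3\ii\pi/4}\infty\)) and the \(y\)-contour (a small loop around \(0\)) so that both pass through the relevant saddle along the steepest descent direction. After this deformation, a standard Gaussian-type Laplace expansion gives \(N m^z(E+\ii 0)\) as the leading contribution, and the multiplicative error is of order \(1/N\) times the inverse Hessian of \(S\) at the saddle, with exponentially small contributions from the tails.

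The main technical obstacle is that the Hessian at the saddle degenerates in three distinct ways, each of which is reflected in a separate term of the error estimate~\eqref{eq:fineqsaddle}. Near the upper edge the density of \(m^z\) has a square-root vanishing, the Hessian scales like \(|E-\ed_+|^{1/2}\), and this produces the \(1/(N|E_+|^{3/2})\) term; when \(\delta \approx 0\) and \(E \to 0^+\) the cusp formation in \(m^z\) (visible as the middle panel of Figure~\ref{fig densities}) produces a quartic flattening of \(S\) and hence the \(1/(NE^{2/3})\) term; and the distinction between the one-interval regime \(\delta \ge 0\) and the two-interval regime \(\delta < 0\), the latter involving a separate lower edge \(\ed_- > 0\), accounts for the two different \(\delta\)-dependent bounds under the minimum.

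Tracking these three degeneracies uniformly in \((E,\delta)\), matching them term by term with the cubic asymptotics of \(m^z\) obtained from~\eqref{MDE Y}, and packaging the outcome as a single multiplicative error of the stated form is what makes the analysis delicate; conceptually it is nonetheless a standard quantitative steepest-descent estimate applied to the same integral representation proved for Theorem~\ref{thm cplx}, and no new probabilistic input is needed beyond the superbosonization identity already established for the complex Ginibre ensemble.
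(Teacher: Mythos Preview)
Your approach is essentially the paper's: apply the quadratic saddle point approximation to the double integral~\eqref{H-z cplx superbos}, observe that the critical point equation \(f'(x_\ast)=0\) is exactly the MDE~\eqref{MDE Y} so that \(x_\ast=m^z(E+\ii 0)\), and track the degeneration of \(f''(x_\ast)\) in the three regimes \(E\approx\ed_+\), \(E\approx 0\) with \(\delta\ge 0\), and \(E\approx\ed_-\) with \(\delta<0\) (see~\eqref{fpp e+},~\eqref{fpp 0},~\eqref{fpp e-}) to obtain the stated error terms.

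One point you gloss over is \emph{why} the leading term comes out as exactly \(N m^z\) with unit prefactor. Inside the spectrum there are two complex conjugate saddles \(x_\ast,\overline{x_\ast}\), so the double integral a priori has four saddle contributions; the paper uses the two algebraic identities
\[
G(x_\ast,x_\ast)=f''(x_\ast),\qquad G(x_\ast,\overline{x_\ast})=0,
\]
which follow by direct computation from~\eqref{H-z cplx superbos}. The second kills the cross terms, and the first ensures that in the diagonal contributions the factors of \(f''(x_\ast)\) from the Gaussian integrals cancel against \(G\), leaving precisely \(N x_\ast=N m^z\). Without checking these identities your argument would only give \(N m^z\) up to an undetermined multiplicative constant. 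Also note that the correct way to quantify the error is not ``\(1/N\) times the inverse Hessian'' but rather the ratio \((\ell_f/\ell_g)^2\) of the decay scale \(\ell_f\sim (N|f''(x_\ast)|)^{-1/2}\) to the variation scale \(\ell_g\sim |yG|/|\nabla(yG)|\) of the prefactor; the paper computes \(\ell_g\) explicitly in each regime to arrive at the stated bounds.
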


\subsection*{Real case}
In the real case our main result is the following optimal bound on \(\E \Tr (Y+E)^{-1}\) for \(E>0\).  Recall the notation $\delta\defeq 1-\abs{z}^2$.
\begin{theorem}[Optimal bound on the resolvent trace in the real case]\label{thm real}
Let \(\rho>0\) be any small constant. Then uniformly in \(E \ge 0\) and \(\delta\ge -CN^{-1/2}\) for some fixed large constant \(C>0\) we have that 
\begin{equation}
    \abs{\E \Tr (Y+E)^{-1} } \lesssim \frac{e^{-\frac{1}{2}N(\Im z)^2}[N^{3/4}\vee N\sqrt{\abs{\delta}}]}{\sqrt{E}}+(N^{3/2}\vee N^2\abs{\delta})\bigl[1+\abs[0]{\log(NE^{2/3})}\bigr]
    \end{equation}
\end{theorem}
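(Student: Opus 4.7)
The plan is to express $\E\Tr(Y^z+E)^{-1}$ as a low-dimensional contour integral via supersymmetry and then estimate that integral by contour deformation. Starting from
$$\E\Tr(Y^z+E)^{-1}=-\partial_\epsilon\big|_{\epsilon=0}\E\frac{\det(Y^z+E-\epsilon)}{\det(Y^z+E)},$$
I would write the ratio of determinants as a supersymmetric Gaussian integral involving $N$ complex bosonic and $N$ Grassmann variables, after linearising $Y^z=(X-z)(X-z)^*$ through auxiliary bosons (equivalently, working at the level of the linearised block matrix $H^z$ of~\eqref{Hdef}). Then perform the Gaussian expectation over the real entries of $X$. It is precisely at this step that the real structure of $X$ produces additional bilinears of the form $\psi^\top\psi$ (as opposed to $\psi^*\psi$ in the complex case) that will ultimately generate the $\Im z$-dependent Gaussian factor.

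Next, apply the Littelmann--Sommers--Zirnbauer superbosonization formula to integrate out the $\mathrm{O}(N)$-invariant bilinears in the supervariables. In the real case this yields a three-dimensional contour integral
$$\E\Tr(Y^z+E)^{-1}=\int_\Gamma e^{N S(\mathbf{v};z,E)}\,P(\mathbf{v};z,E)\,d\mathbf{v},$$
where the ``free energy'' $S$ has critical points given by solutions of the MDE~\eqref{MDE Y}, and $P$ is an explicit rational prefactor. For $E$ above the eigenvalue spacing scale $c(N,z)$, the saddle-point method applied to the two ``bosonic'' variables reproduces the local law~\eqref{local law} with an improved error term due to the expectation. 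In the critical regime $E\lesssim c(N,z)$, the relevant saddles coalesce and one must perform a local analysis of the contour integral near the merged saddle, similar in spirit to (but more delicate than) the two-variable analysis supporting Theorem~\ref{thm cplx}.

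The main obstacle is the third integration variable, which carries a Gaussian saddle with quadratic exponent $-\tfrac12 N(\Im z)^2$. Extracting this factor uniformly in $\delta\ge -CN^{-1/2}$ and $E\ge 0$, while simultaneously matching it against the $1/\sqrt{E}$ singularity from the edge and the transition between the $N^{3/4}$ (cusp) and $N\sqrt{\abs{\delta}}$ (outside-the-bulk) behaviours depicted in Figure~\ref{fig densities}, requires careful book-keeping of three regimes: (i) $\abs{\Im z}\gtrsim N^{-1/2}$, where the Gaussian factor exponentially suppresses the real-specific contribution and leaves only the complex-type second summand; (ii) $\abs{z}^2\approx 1$ with $E$ at the critical scale $N^{-3/2}$, where the merged saddle produces the $N^{3/4}$ factor; (iii) $\abs{z}^2\lesssim 1-N^{-1/2}$, where the separated saddle yields $N\sqrt{\abs{\delta}}$. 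The logarithmic factor $1+\abs[0]{\log(NE^{2/3})}$ should emerge from integrating a $1/E$-type singularity along the deformed contour across dyadic scales. A secondary technical difficulty is checking that the contour deformation in three complex dimensions can be carried out without crossing the poles of $P$ or losing control of the tails at infinity, uniformly in all parameters.
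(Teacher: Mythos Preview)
Your overall strategy---supersymmetric representation followed by the real superbosonization formula of Littelmann--Sommers--Zirnbauer, then contour analysis of the resulting low-dimensional integral---is exactly what the paper does. However, several structural details in your sketch do not match how the real case actually unfolds, and getting these right is essential for the uniform estimates.

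First, the integral is not over ``three complex dimensions.'' After superbosonization and the parametrization of the $4\times4$ supermatrix, the paper obtains one genuinely complex contour variable $\xi$ (from the fermion--fermion block), one real variable $a\in[0,\infty)$ (radial boson--boson), and one real variable $\tau\in[0,1]$ (an angular-type boson--boson coordinate); a fourth angular variable integrates out trivially. So only the $\xi$-contour is deformed in the complex plane; the $a$- and $\tau$-integrals are handled by real-variable estimates (Lemmas~6.2--6.3 in the paper).

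Second, the factor $e^{-\frac12 N(\Im z)^2}$ does not arise from a ``Gaussian saddle'' in a separate variable. It comes from the $\tau$-integral: the phase difference $g(a,\tau,\eta)-g(a,1,\eta)$ contains the term $2\eta^2(1-\tau)/\tau$ (with $\eta=\Im z$), so on the range $\tau\in[N^{\rho/2}a^{-1},1/2]$ the integrand picks up $e^{-N\eta^2(1-\tau)/\tau}\lesssim e^{-\frac12 N\eta^2}$. The two terms in the final bound correspond to the two pieces of the $\tau$-integral: $\tau$ near $1$ contributes the complex-type term $(N^{3/2}\vee N^2|\delta|)(1+|\log(NE^{2/3})|)$, while $\tau$ bounded away from $1$ gives the real-specific $e^{-\frac12 N\eta^2}$ term with the $1/\sqrt{E}$ singularity. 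There is no separate case analysis on whether $|\Im z|\gtrsim N^{-1/2}$; the exponential factor is carried through all estimates uniformly.

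Third, the $N^{3/4}$ versus $N\sqrt{|\delta|}$ dichotomy is not a cusp-versus-bulk distinction coming from saddle coalescence; it falls out directly from the elementary $\tau$- and $a$-integral bounds (Lemma~6.3) once the large-$a$, large-$|\xi|$ expansions of $f$, $g$, and $G_N$ are inserted. The logarithm arises exactly as you say, from integrating a $1/a$ or $1/|\xi|$ term over a dyadic range.
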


Finally, we present our bound on the tail asymptotics for both real and complex cases;
for most applications, this  can be viewed as the main result of this paper. Since a sizeable contribution to \(\Im \Tr(Y-E+\ii 0)^{-1}\) and \(\Im \Tr(Y+E)^{-1}\) comes from the smallest eigenvalue \(\lambda_1(Y^z)\), 
by a straightforward Markov inequality we  immediately obtain the following corollary on the tail asymptotics of \(\lambda_1(Y^z)\) as an easy consequence of Theorems~\ref{thm cplx} and~\ref{thm real}. 
\begin{corollary}[Tail asymptotics of \(\lambda_1(Y^z)\)]
\label{cor:tailas}
For any $C>0$, uniformly in $x\in (0,C]$ and $1-|z|^2>-CN^{-1/2}$ we have the bound
\begin{equation}
\label{eq:complllamb}
\mathbf{P}\Big(\lambda_1(Y^z)\le c(N,z) x\Big)\lesssim \bigl(1+\abs[0]{\log x}\bigr) x
\end{equation}
in the complex case, and
\begin{equation}
\label{eq:reallamb}
\mathbf{P}\Big(\lambda_1(Y^z)\le c(N,z) x\Big) \lesssim e^{-\frac{1}{2}N(\Im z)^2} \sqrt{x}+ \bigl(1+\abs[0]{\log x}\bigr)x
\end{equation}
in the real case, where we recall the definition of the scaling factor $c(N,z)$ from \eqref{cN def}.
\end{corollary}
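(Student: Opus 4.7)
The plan is to derive both tail bounds via Markov-type inequalities that convert the trace estimates of Theorems~\ref{thm cplx} and~\ref{thm real} into tail probabilities for $\lambda_1(Y^z)$. Two variants are needed because the complex estimate controls the resolvent at $w = E + \ii 0$ on the real axis (so requires a density-of-states interpretation), while the real estimate controls the resolvent at $w = -E$ away from the spectrum.

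\textbf{Real case.} Since $Y^z + E$ is deterministically positive definite for $E > 0$, we have $\Tr(Y^z + E)^{-1} = \sum_i (\lambda_i + E)^{-1} \ge (\lambda_1 + E)^{-1} \ge (2E)^{-1}$ on the event $\{\lambda_1 \le E\}$, so Markov's inequality yields $\P(\lambda_1 \le E) \le 2E \cdot \E \Tr(Y^z + E)^{-1}$. Plugging in $E = c(N,z) x$ and invoking Theorem~\ref{thm real}, the algebraic identity $N^{3/4} \vee N\sqrt{|\delta|} = \sqrt{N^{3/2} \vee N^2|\delta|} = c(N,z)^{-1/2}$ makes the first term collapse into $e^{-\frac{1}{2}N(\Im z)^2}\sqrt{x}$, while the second term becomes $2x(1 + |\log(NE^{2/3})|)$, which after tracking $N c(N,z)^{2/3}$ in the change of variable reduces to $(1 + |\log x|)x$, establishing~\eqref{eq:reallamb}.

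\textbf{Complex case.} By the Stieltjes inversion formula, which applies here since $\E\Tr(Y^z - s - \ii 0)^{-1}$ is a smooth function of $s$ after Gaussian averaging, the averaged counting measure of eigenvalues in $[0, E]$ satisfies
\begin{equation*}
\E\bigl[\#\{i : \lambda_i(Y^z) \in [0, E]\}\bigr] = \frac{1}{\pi}\int_0^E \Im \E \Tr(Y^z - s - \ii 0)^{-1} \diff s.
\end{equation*}
Since $\{\lambda_1 \le E\} \subseteq \{\#\{\cdots\} \ge 1\}$, Markov's inequality gives
\begin{equation*}
\P(\lambda_1 \le E) \le \frac{1}{\pi}\int_0^E \bigl|\E \Tr(Y^z - s - \ii 0)^{-1}\bigr| \diff s.
\end{equation*}
Bound the integrand by Theorem~\ref{thm cplx}: combining the prefactor $N^{3/2}/\widetilde z_\ast$ of the main contour-integral term with the estimate~\eqref{eq:asymp} and using $N^{3/2}(1 \vee \widetilde\delta) = c(N,z)^{-1}$ gives $|\E \Tr(Y^z - s - \ii 0)^{-1}| \lesssim c(N,z)^{-1}(1 + |\log(s/c(N,z))|)$; the remainder term $N(1 \vee \widetilde\delta)(1 + |\log \lambda|)$ from~\eqref{eq:onepointest} is smaller by a factor $N^{-1/2}$ and is absorbed into the same bound. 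Substituting $E = c(N,z) x$ and changing variables $\mu = s/c(N,z)$ turns the integral into $\int_0^x (1 + |\log \mu|) \diff \mu \lesssim x(1 + |\log x|)$, proving~\eqref{eq:complllamb}.

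The only delicate step is the logarithmic bookkeeping --- in particular, reconciling the two sub-regimes $\lambda \gtrless \widetilde\delta^3$ of~\eqref{eq:asymp} and the factor $|\log(NE^{2/3})|$ in Theorem~\ref{thm real} --- all of which reduce to $1 + |\log x|$ after the change of variables once the identity $c(N,z)^{-1} = N^{3/2}(1 \vee \widetilde\delta) = N^{3/2} \vee N^2|\delta|$ is systematically exploited; the Markov step itself is then entirely standard.
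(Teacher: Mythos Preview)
Your argument is correct and follows exactly the route the paper indicates: the paper's own ``proof'' is the single sentence that the corollary follows from Theorems~\ref{thm cplx} and~\ref{thm real} ``by a straightforward Markov inequality,'' and you have faithfully supplied those details. Your treatment of the real case is verbatim the intended one, and in the complex case your integration of the averaged density over $[0,E]$ is the clean way to make rigorous the paper's informal remark that ``a sizeable contribution to $\Im\Tr(Y-E+\ii 0)^{-1}$ comes from the smallest eigenvalue''; the algebraic reduction $N^{3/2}(1\vee\widetilde\delta)=c(N,z)^{-1}$ and the absorption of the $\landauO{N(1\vee\widetilde\delta)(1+\abs{\log\lambda})}$ remainder are handled correctly.
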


\subsection*{Properties of the asymptotic Stieltjes transform \texorpdfstring{\(m^z\)}{mz}}
We now record some information on the deterministic Stieltjes transform \(m^z\) which will be useful later. The endpoints of the support of the density of states \(\pi^{-1}\Im m^z\) are the zeros of the discriminant  of the cubic equation~\eqref{MDE Y}
since passing through these points with the real parameter \( E=\Re w\) creates solutions with nonzero imaginary part.
Elementary calculations show that the support of $\Im m_{Y^z}$ is
\([0,\ed_+]\) if \(0\le \delta\le 1\) and it is \([\ed_-,\ed_+]\) if \(\delta<0\), where 
\begin{subequations}
    \begin{equation}\label{ed pm}
        \ed_\pm \defeq  \frac{8\delta^2 \pm(9-8 \delta)^{3/2}-36\delta+27}{8(1-\delta) },
    \end{equation}
and \(\ed_-\) is only considered if \(\delta<0\). Note that while \(\ed_+\sim 1\), the edge \(\ed_-\) may be close to \(0\); more precisely \(0<\ed_-=-4\delta^3/27\bigl(1+\landauO{\abs{\delta}}\bigr)\). The slope coefficient of the square-root density at the edge in \(\ed_\pm\) is given by
    \begin{equation}\label{m asymp}
        \Im m(\ed_\pm\mp \lambda)= \begin{cases}
        \gamma_\pm \sqrt{\lambda}\Bigl(1+\landauO{\sqrt{\lambda}}\Bigr),& \lambda\ge 0,\\  0& \lambda\le 0,\end{cases}, \quad 
\gamma_\pm \defeq \frac{2 \sqrt{2} \left(\sqrt{9-8 \delta }\pm 1\right)^{3/2}}{\left(\sqrt{9-8 \delta }\pm 3\right)^{5/2} \sqrt[4]{9-8 \delta }}.\end{equation}\end{subequations}
Note that while the square-root edge at \(\ed_+\) is non-singular in the sense \(\gamma_+\sim 1\), the square-root edge in \(\ed_-\) becomes singular for small \(\abs{\delta}\) as 
\[\gamma_-=\frac{9}{4\abs{\delta}^{5/2}}\Bigl(1+\landauO{\abs{\delta}}\Bigr).\]

\section{Supersymmetric method} 
Let \(\chi_1,\overline{\chi_1},\dots,\chi_N,\overline{\chi_N}\) denote Grassmannian variables satisfying the commutation rules 
\[ \chi_i \chi_j = -\chi_j\chi_i, \quad \chi_i\overline{\chi_j}=-\overline{\chi_j}\chi_i, \quad \overline{\chi_i}\overline{\chi_j}=-\overline{\chi_j}\overline{\chi_i},\] 
from which it follows that \(\chi_i^2=\overline{\chi_i}^2=0\). As a convention we set \(\overline{\overline{\chi_i}}\defeq-\chi_i\).   The power series of any function of Grassmannian variables is multilinear and it suffices to define the integral in the sense of Berezin~\cite{MR0914369} over Grassmannian variables as the derivatives
\[ \partial_{\chi_k} \chi_k= \partial_{\overline{\chi_k}} \overline{\chi_k}=1, \quad \partial_{\chi_k}1=\partial_{\overline{\chi_k}}1=0,\quad \partial_\chi\defeq \partial_{\chi_1}\partial_{\overline{\chi_1}}\dots\partial_{\chi_N}\partial_{\overline{\chi_N}}\]
and extend them multilinearly to all finite  combinations of monomials in Grassmannians. 
We denote the column vectors with entries \(\chi_1,\dots,\chi_N\) and \(\overline{\chi_1},\dots,\overline{\chi_N}\) by \(\chi\) and \(\overline{\chi}\), respectively. The conjugate transposes of those vectors, i.e.~the row vectors with entries \(\overline{\chi_1},\dots,\overline{\chi_N}\) and \(-\chi_1,\dots,-\chi_N\) will be denoted by \(\chi^\ast\) and \(\overline{\chi}^\ast\), respectively. 
Note that \((\chi^*)^*= -\chi\), \([\overline{\chi}^*]^*=-\overline{\chi}\).
 We now define the inner product of Grassmannian vectors \(\chi,\phi\) by 
\[\braket{\chi,\phi}\defeq \sum_i\overline{\chi_i}\phi_i,\]
so that the quadratic form \(\sum_{i,j} \overline{\chi_i} A_{ij}\chi_j\) can be written as 
\[ \braket{\chi,A\chi}=\sum_{i,j} \overline{\chi_i} A_{ij}\chi_j,\]
where  the matrix-vector product is understood in its usual sense.  
Similarly,  \(s\) and  \(\overline{s}\) denote the column vectors with complex entries \(s_1, \ldots, s_N\) and
their complex conjugates \(\overline{s_1}, \ldots,  \overline{s_N} \), respectively, and for the conjugate transpose
we have \( (s^*)^*=s \) as usual. We have
\[\braket{s,\phi}\defeq \sum_i\overline{s_i}\phi_i,
\qquad \braket{\chi,s}\defeq \sum_i\overline{\chi_i}s_i,\]
 and similarly for quadratic forms. 
The commutation rules naturally also apply to linear functions of the Grassmannians, and therefore also, for example, \(\braket{s,\chi}^2=\braket{\chi,s}^2=0\) for any vector \(s\) of complex numbers. The complex numbers \(s_i\) and often called
\emph{bosonic} variables, while Grassmannians are called \emph{fermions}, motivated by the basic (anti)commutativity 
of the bosonic/fermionic field operators in physics.

\subsection{Determinant identities}
The backbone of the supersymmetric method are the determinant identities
\[ 
\begin{split}
    \frac{1}{\ii^N}\frac{\sgn(\Im w)^N}{\det(H-w)}&=\int_{\C^N} \exp\bigl(-\ii\sgn(\Im w)\braket{s,(H-w)s}\bigr)\diff s,\quad \diff s\defeq\frac{\diff\Re{s_1}\diff\Im{s_1}}{\pi}\cdots\frac{\diff\Re{s_N}\diff \Im{s_N}}{\pi}\\
    \ii^N \det(H-w) &= \partial_\chi\exp(\ii\braket{\chi,(H-w)\chi} ),\quad\partial_\chi\defeq \partial_{\chi_1}\partial_{\overline{\chi_1}}\dots\partial_{\chi_N}\partial_{\overline{\chi_N}},
\end{split} 
\]
where the exponential is defined by its (terminating) Taylor series. Consequently we can conveniently express the generating function as
\[ Z(w,w_1) \defeq \E \frac{\det(H-w_1)}{\det(H-w)}=\E \int_{\C^N}\diff s\,\partial_\chi \exp\Bigl(\ii\braket{\chi,(H-w_1)\chi}-\ii\braket{s,(H-w)s}\Bigr),\]
for \(w\in\HC\) and \(w_1\in\C\), where choice of \(w\) with \(\Im w>0\) guarantees the convergence of
the integral. By taking the \(w_1\) derivative and setting \(w=w_1\) it follows that 
\begin{equation} \begin{split}
\Tr(H-w)^{-1}=&{}-\frac{\partial}{\partial w_1}\frac{\det(H-w_1)}{\det(H-w)}\bigg\rvert_{w_1=w} =\ii\int \braket{\chi,\chi} e^{-\ii \Tr (H-w) [s s^\ast+\chi \chi^\ast] }, \\ \int\defeq&{}\int_{\C^N}\diff s\, \partial_\chi.
\end{split} \label{Tr susy} \end{equation}

\subsection{Superbosonization identity}
After taking expectations, i.e.~performing the Gaussian integration for the entries of \(Y=Y^z=(X-z)(X-z)^\ast\), the resolvent identity~\eqref{Tr susy} will depend on the complex vector \(s\) and the Grassmannian vector \(\chi\) only via certain inner products. More specifically, after defining the \(N\times 2\) and \(N\times 4\) matrices \(\Phi\defeq (s,\chi)\) and \(\Psi\defeq (s,\overline{s},\chi,\overline{\chi})\), the expectation
of the  resolvent can be expressed as an integral over the \(2\times2\) or \(4\times 4\) supermatrices \(\Phi^\ast\Phi\) or \(\Psi^\ast\Psi\) 
 in the complex and real case, respectively. \emph{Supermatrices} are \(2\times 2\) block matrices whose diagonal blocks are commonly referred to as the boson-boson and the fermion-fermion block, while the off-diagonal blocks are the boson-fermion and fermion-boson block. For supermatrices \(Q\) the \emph{supertrace} and \emph{superdeterminant},  the natural generalizations of trace and determinant, are given by 
\begin{equation}\label{def:supertrace}
\sTr\begin{pmatrix}x&\sigma\\\tau&y\end{pmatrix} \defeq \Tr (x)-\Tr (y), \quad \sdet\begin{pmatrix}x&\sigma\\\tau&y\end{pmatrix}\defeq\frac{\det(x)}{\det(y-\tau x^{-1}\sigma)},
\end{equation} 
and the inverse of a supermatrix is   
\begin{equation}\label{def:superinverse}
\begin{pmatrix}x&\sigma\\\tau&y\end{pmatrix}^{-1} = \begin{pmatrix}  (x - \tau y^{-1} \sigma)^{-1} & - x^{-1} \sigma (y - \sigma x^{-1} \tau) \\
  -  y^{-1} \tau (x - \tau y^{-1} \sigma)^{-1} 
&  (y - \sigma x^{-1} \tau)^{-1}\end{pmatrix}.
\end{equation} 
The integral over the remaining degrees of freedom in \(\Phi,\Psi\) other than the inner products in \(\Phi^\ast\Phi,\Psi^\ast\Psi\) can conveniently be performed using the well known \emph{superbosonization formula} which we now recall. It basically identifies the integration volume of the irrelevant degrees of freedom with  the high power of
the superdeterminant of the supermatrix containing the relevant inner products (collected in a \(2\times 2\)
supermatrix \(Q\) in the complex case and a \(4\times 4\) supermatrix \(Q\) in the real case).

\subsubsection{Complex superbosonization}
For any analytic function \(F\) 
 with sufficiently fast decay at \(+\infty\) in the boson-boson sector (in the variable $x$) 
  the complex superbosonization identity from~\cite[Eq.~(1.10)]{MR2430637} implies
\begin{equation}\label{eq cplx superbosonization}
\int F(\Phi^\ast\Phi) = \int_{Q} \sdet^N(Q) F(Q),\quad \int_Q\defeq\frac{1}{2\pi\ii }\int\diff x \oint\diff y\, \partial_\sigma\partial_\tau,\quad Q\defeq\begin{pmatrix}x&\sigma\\\tau&y\end{pmatrix},
\end{equation}
where \(\int\diff x\) denotes the Lebesgue integral on \([0,\infty)\), \(\oint\diff y\) denotes the counterclockwise complex line integral on \(\Set{z\in\C|\abs{z}=1}\) and \(\sigma,\tau\) denote independent scalar Grassmannian variables. 
The key point is that while 
the integral on the left hand side is performed over \(N\) complex numbers and \(2N\) Grassmannians,
the integral on the right is simply over a \(2\times 2\) supermatrix, i.e.~two complex variables and two Grassmannians. 
Note that the identity in~\cite{MR2430637} is more general than~\eqref{eq cplx superbosonization} in the sense that it allows for bosonic and fermionic sectors of unequal sizes. For the case of equal sizes, which concerns us, the formula gets simplified, the measure \(DQ\)  in~\cite[Eq.~(1.8)]{MR2430637} 
becomes the flat Lebesgue measure since  
two determinants  cancel each other as 
\begin{equation}\label{eq simplification dets} \begin{split}
    \det(1-x^{-1}\sigma y^{-1}\tau)\det(1- y^{-1}\tau x^{-1}\sigma)&= e^{\Tr(\log(1-x^{-1}\sigma y^{-1}\tau)+\log(1- y^{-1}\tau x^{-1}\sigma))}\\
    &= e^{-\sum_{k\ge 1} \frac{1}{k} \Tr\bigl(( x^{-1}\sigma y^{-1}\tau)^k+(  y^{-1}\tau x^{-1}\sigma)^k\bigr)}=1,
\end{split} \end{equation}
where the sum is finite and the last equality followed using the commutation rules.

\subsubsection{Real superbosonization}
In the real case we similarly have the real superbosonization identity from~\cite[Eq.~(1.13)]{MR2430637}
\begin{equation} \label{eq real superbosonization}\begin{split}
\int F(\Psi^\ast \Psi) =&{} \int_{Q} \sdet^{N/2}(Q) F(Q), \\ 
\int_Q\defeq&{}\frac{1}{(2\pi)^2\ii} \int \diff x\oint\diff y\, \partial_\sigma \Bigl(\frac{\det(y)}{\det(x)}\Bigr)^{1/2}\det\Bigl(1-\frac{x^{-1}}{y}\sigma \tau\Bigr)^{-1/2} \end{split} \end{equation}
The supermatrix \(Q\) has \(2\times 2\) blocks: \(x\) is non-negative Hermitian, \(y\) is a scalar multiple of the identity matrix. The off-diagonal blocks \(\sigma,\tau\) are related by 
\[\tau \defeq - t_a \sigma^t t_s^{-1}, \quad t_s\defeq\begin{pmatrix}0&1\\1&0\end{pmatrix}, \quad t_a\defeq \begin{pmatrix}0&-1\\1&0\end{pmatrix}.\]
Here the \(\int\diff x\) integral is the Lebesgue measure on non-negative Hermitian \(2\times 2\) matrices \(x\) satisfying \(x_{11}=x_{22}\), i.e.
\[\int\diff x\defeq \int_{0}^\infty\diff_{x_{11}}\int_{\abs{x_{12}}\le x_{11}}\diff \Re x_{12} \diff \Im x_{12},\]
and the fermionic integral is defined as \(\partial_\sigma\defeq \partial_{\sigma_{11}}\partial_{\sigma_{22}}\partial_{\sigma_{21}}\partial_{\sigma_{12}}\). Furthermore, under the slight abuse of notation of identifying the \(2\times 2\) matrix \(y\), which is a scalar multiple of the identity matrix, with the corresponding scalar, \(\oint \diff y\) is the complex line integral over \(\abs{y}=1\) in a counter-clockwise direction. Unlike in the complex case, the matrix elements of the \(4\times 4\) supermatrix \(Q\) are not
independent; there are only 4 (real) bosonic and 4 fermionic degrees of freedom. These identities among
the elements of \(Q\) stem from natural relations between the scalar product of the column vectors of \(\Psi\).
For example the identity \(\braket{ s, s} = \braket{ \overline{s}, \overline{s}}\) 
from the first two diagonal elements of \((\Psi^*\Psi)\) corresponds to \(x_{11}=x_{22}\),
while \(\braket{ \overline{\chi},  \overline{\chi}} =0\) is responsible for \(y_{12}=0\). The relation
\[
   \tau = \begin{pmatrix} \sigma_{22} & \sigma_{12} \cr-\sigma_{21} & -\sigma_{11}\end{pmatrix}
\] 
encoded in the last line of~\eqref{eq real superbosonization} corresponds to relations between scalar products
of bosonic and fermionic vectors and their complex conjugates; for example \(\tau_{21} =-\sigma_{21}\) comes from 
 the identity
\( (\Psi^*\Psi)_{41}
= \braket{ -\bar \chi, s} = - \braket{\bar s, \chi } = - (\Psi^*\Psi)_{23}\), etc. 

\subsection{Application to \texorpdfstring{\(Y^z\)}{Yz} in the complex case }
Our goal is to evaluate \(\E\Tr(H-w)^{-1}\) asymptotically on the scale where \(E\) is comparable with the eigenvalue spacing. We now use the identity 
\begin{equation}\label{eq:yz} \begin{split}
    \Tr (Y-w)^{-1}&=\ii\int\braket{\chi,\chi} e^{-\ii\Tr[(X-z)(X^\ast-\overline{z})-w](ss^\ast+\chi\chi^\ast)}\\
    &=\ii\int\braket{\chi,\chi} e^{\ii w \braket{s,s}-\ii  w \braket{\chi,\chi} -\ii \Tr(X-z)(X^\ast-\overline{z})\Phi\Phi^\ast}
\end{split}  \end{equation}
for \(w=E+\ii\epsilon\) with \(|E|\gg\epsilon>0\). We now compute the ensemble average as (the integral in the second line below is with respect to the flat Lebesgue measure on matrices \(X\))
\begin{equation}\label{susyused} \begin{split}
    &\E e^{-\ii\Tr(X-z)(X^\ast-\overline{z})\Phi\Phi^\ast}\\
    &=\Bigl(\frac{N}{\pi}\Bigr)^{N^2}\int\exp\Bigl[-N\Tr X^\ast\Bigl(1+\ii \frac{\Phi\Phi^\ast}{N}\Bigr) X+\ii \overline{z}\Tr \Phi\Phi^\ast X+\ii z \Tr X^\ast \Phi\Phi^\ast-\ii\abs{z}^2 \Tr \Phi\Phi^\ast \Bigr]\\
    &=\sdet\Bigl(1+\ii\frac{\Phi^\ast\Phi}{N}\Bigr)^{-N}\exp\Bigl(-\ii\abs{z}^2\Bigl[\Tr \Phi\Phi^\ast-\frac{\ii}{N} \Tr \Phi\Phi^\ast\Bigl(1+\ii \frac{\Phi\Phi^\ast}{N}\Bigr)^{-1}\Phi\Phi^\ast\Bigr]\Bigr),\\
    &=\sdet\Bigl(1+\ii\frac{\Phi^\ast\Phi}{N}\Bigr)^{-N}\exp\Bigl(-\ii\abs{z}^2\Tr \Phi\Bigl(1+\frac{\ii}{N}\Phi^\ast\Phi\Bigr)^{-1}\Phi^\ast\Bigr),\\
    &=\sdet\Bigl(1+\ii\frac{\Phi^\ast\Phi}{N}\Bigr)^{-N}\exp\Bigl(-N\abs{z}^2\sTr \Bigl(1+\frac{\ii}{N}\Phi^\ast\Phi\Bigr)^{-1}\frac{\ii}{N}\Phi^\ast\Phi\Bigr).
 \end{split} \end{equation}
 To perform the $\int=\int_{\C^N}\diff s\, \partial_\chi$ integration in~\eqref{eq:yz}  we use 
the superbosonization formula~\eqref{eq cplx superbosonization}
for the function
\begin{equation}\label{def:F}
 F(\Phi^\ast \Phi)\defeq \braket{\chi,\chi}
 \sdet\Bigl(1+\ii\frac{\Phi^\ast\Phi}{N}\Bigr)^{-N}\exp\Bigl(-N\abs{z}^2\sTr \Bigl(1+\frac{\ii}{N}\Phi^\ast\Phi\Bigr)^{-1}\frac{\ii}{N}\Phi^\ast\Phi
 +\ii w \sTr\Phi^\ast\Phi \Bigr),
\end{equation}
We view $F$ as a function of the four independent variables collected in the entries of the $2\times 2$ matrix $\Phi^\ast\Phi$.
 Strictly speaking 
the function $F$ is only meromorphic but not entire in these four variables, but since the
integration regimes on both sides of the superbosonisation formula are well separated away from the poles of $F$,
a simple approximation argument outlined in Appendix~\ref{app:reg} justifies its usage. 
Together with the change of variables \(\frac{\ii}{N}Q\mapsto Q\) it now implies that
\[ \begin{split}
    \E \Tr(Y-w)^{-1} = N \int_{Q'} y e^{Nw\sTr(Q)+N\log\sdet(Q)-N\log\sdet(1+Q)-N \abs{z}^2\sTr(1+Q)^{-1}Q }
\end{split} \]
where \(\int_{Q'}\) indicates the changed integration regime due to the change of variables, more specifically under, \(Q'\) the \(x\)-integration is over \([0,\ii\infty)\) and the \(y\)-integration is over a small circle \(\Set{u\in\C|\abs{u}=N^{-1}}\). Note that the change of variables through scaling does not contribute an additional factor, since superdeterminants are scale invariant. It remains to perform the Berezinian integral. To do so we split
\[Q=q+\mu,\quad q=\begin{pmatrix}x&0\\0&y\end{pmatrix},\quad \mu=\begin{pmatrix}0&\sigma\\\tau&0\end{pmatrix}\]
and compute 
\[ \begin{split} \exp\Bigl(-N\log\sdet(1+Q)\Bigr)&=\exp\Bigl(-N\sTr\log(1+q+\mu)\Bigr)\\
&=\exp\Bigl(-N\sTr\log(1+q)+\frac{N}{2}\sTr(1+q)^{-1}\mu(1+q)^{-1}\mu\Bigr) \\
&=\exp\Bigl(-N\log(1+x)+N\log(1+y)\Bigr)\Bigl(1+N \frac{\sigma\tau}{(1+x)(1+y)}\Bigr)\\
\exp\Bigl(N\log\sdet(Q)\Bigr) &= \exp\Bigl(N\log(x)-N\log(y)\Bigr)\Bigl(1-N \frac{\sigma\tau}{xy}\Bigr)
\end{split}\]
and 
\[ \begin{split}
    &\exp\Bigl(-N\abs{z}^2\sTr(1+Q)^{-1}Q\Bigr) \\
    &\quad= \exp\Bigl(-N\abs{z}^2 \sTr\Bigl[(1+q)^{-1}q-(1+q)^{-1}\mu(1+q)^{-1}\mu(1+q)^{-1}\Bigr]\Bigr)\\
    &\quad= \exp\Bigl(-N\abs{z}^2 \frac{x}{1+x}+N\abs{z}^2\frac{y}{1+y} \Bigr) \Bigl(1+N\abs{z}^2\frac{\sigma\tau}{(1+x)(1+y)}\Bigl(\frac{1}{1+x}+\frac{1}{1+y}\Bigr)\Bigr).
\end{split} \]
By combining these identities we arrive at the final result\footnote{Essentially the same formula, obtained by direct computations, was presented by M. Shcherbina in her seminar
talk on Jan 11, 2016 in Bonn. Our derivation of the same formula via superbosonization is merely a pedagogical preparation to the much more involved real case in Section~\ref{sec:realsuperbos}.}
\begin{equation}\label{H-z cplx superbos}
    \begin{split}
        \E\Tr(Y-w)^{-1} ={}& \frac{N^2}{2\pi\ii} \int_0^\infty\diff x\oint\diff y e^{-Nf(x)+Nf(y)} y \cdot G(x,y),\\
        G(x,y)\defeq{}& \frac{1}{xy}-\frac{1}{(1+x)(1+y)}\Bigl[1+\frac{\abs{z}^2}{1+x}+\frac{\abs{z}^2}{1+y}\Bigr] , \\
        f(x)\defeq{}& \log\frac{1+x}{x}-\frac{\abs{z}^2}{1+x}-wx, 
    \end{split}
\end{equation}
where the \(x\)-integration is over \((0,\ii\infty)\) and the \(y\)-integration is over a circle of radius \(N^{-1}\) around the origin.

\subsection{Application to \texorpdfstring{\(Y^z\)}{Yz} in the real case }
\label{sec:realsuperbos}
We now consider the real case and introduce the \(N\times 4\) matrix \(\Psi\defeq (s,\overline{s},\chi,\overline{\chi})\), the \(2\times 2\) matrix \(Z\defeq \bigl(\begin{smallmatrix}z&0\\0&\overline{z}\end{smallmatrix}\bigr)\) and the \(4\times 4\) matrix \(Z_2\defeq\bigl(\begin{smallmatrix}Z&0\\0&Z\end{smallmatrix}\bigr)\), and use that 
    \[ \ii \overline{z}\Tr\Phi\Phi^\ast X+\ii z\Tr X^t \Phi\Phi^\ast=\frac{\ii}{2} \Tr \Psi Z_2^\ast \Psi^\ast X+\frac{\ii}{2} \Tr X^t\Psi Z_2\Psi^\ast, \quad \Bigl(\Psi Z_2\Psi^\ast\Bigr)^t=\Psi Z_2^\ast\Psi^\ast  \]
    to compute
    \begin{equation} \begin{split}
    &\E e^{-\ii \Tr(X-z)(X^t-\overline{z})\Phi\Phi^\ast} \\
    &= \E\Bigl(\frac{N}{2\pi}\Bigr)^{N^2/2}\int\exp\Bigl(-\frac{N}{2}\Tr X^t\Bigl(1+\frac{2\ii}{N}\Phi\Phi^\ast\Bigr) X+\ii \overline{z}\Tr \Phi\Phi^\ast X+\ii z \Tr X^t \Phi\Phi^\ast-\ii \abs{z}^2\Tr \Phi\Phi^\ast\Bigr)\\
    &= \E\Bigl(\frac{N}{2\pi}\Bigr)^{N^2/2}\int\exp\Bigl(-\frac{N}{2}\Tr X^t\Bigl(1+\frac{\ii}{N}\Psi\Psi^\ast\Bigr) X+\frac{\ii}{2}\Tr \Psi Z_2^\ast\Psi^\ast X\\
    &\qquad\qquad\qquad\qquad\qquad\qquad\qquad\qquad\qquad+\frac{\ii}{2} \Tr X^t \Psi Z_2\Psi^\ast-\frac{\ii \abs{z}^2}{2}\Tr \Psi\Psi^\ast\Bigr)\\
    &= \sdet\Bigl(1+\frac{\ii}{N}\Psi^\ast\Psi\Bigr)^{-N/2}\exp\Bigl(-\frac{\ii}{2}\Tr\Psi Z_2^\ast\Bigl(1-\frac{\ii}{N} \Psi^\ast\Bigl(1+\frac{\ii}{N}\Psi\Psi^\ast\Bigr)^{-1}\Psi \Bigr)Z_2\Psi^\ast\Bigr)\\
    &= \sdet\Bigl(1+\frac{\ii}{N}\Psi^\ast\Psi\Bigr)^{-N/2}\exp\Bigl(-\frac{\ii}{2}\Tr\Psi Z_2^\ast\Bigl(1+\frac{\ii}{N}\Psi^\ast\Psi\Bigr)^{-1}Z_2\Psi^\ast\Bigr)\\
    &= \sdet\Bigl(1+\frac{\ii}{N}\Psi^\ast\Psi\Bigr)^{-N/2}\exp\Bigl(-\frac{N}{2}\sTr\Bigl(1+\frac{\ii}{N}\Psi^\ast\Psi\Bigr)^{-1}Z_2 \frac{\ii}{N}\Psi^\ast\Psi Z_2^\ast\Bigr),
    \end{split}\end{equation}
    where we used that \(X\) is real and \(\Psi\Psi^\ast\) is symmetric. The superbosonization formula thus implies 
    \[\begin{split}
        \E \Tr (H-w)^{-1}&= \frac{N}{2(2\pi)^2\ii } \int \frac{\Tr(y)\det(y)^{1/2}}{\det(x)^{1/2}}  \exp\Bigl(-\frac{1}{2}\log\det(1-x^{-1}\sigma y^{-1}\tau)\Bigr)\\
        \times \exp\Bigl(\frac{N}{2}&\Bigl[\sTr(wQ)-\log\sdet(1+Q)+\log\sdet(Q)-\sTr(1+Q)^{-1}Z_2 Q Z_2^\ast\Bigr]\Bigr)\\
        &= \frac{N}{2(2\pi)^2} \int \frac{\Tr(y)\det(y)^{1/2}}{\det(x)^{1/2}}  \exp\Bigl(-\frac{1}{2}\Tr\log(1-x^{-1}\sigma y^{-1}\tau)\Bigr)\\
        \times \exp\Bigl(\frac{N}{2}&\Bigl[\sTr(wQ)-\sTr\log(1+Q)+\sTr\log(Q)-\sTr(1+Q)^{-1}Z_2 Q Z_2^\ast\Bigr]\Bigr)
    \end{split}\]
    where 
    \[Q=\begin{pmatrix}x&\sigma\\\tau&y\end{pmatrix}\equiv\frac{\ii}{N}\Psi^\ast\Psi,\qquad\tau=\begin{pmatrix}0&1\\-1&0\end{pmatrix}\sigma^t\begin{pmatrix}0&1\\1&0\end{pmatrix}.\]
    In order to expand the exponential terms to fourth order in \(\sigma\) we introduce the short-hand notations
    \begin{equation}\label{sigmatau}
    Q=q+\mu,\quad q=\begin{pmatrix}x&0\\0&y\end{pmatrix},\quad \mu=\begin{pmatrix}0&\sigma\\\tau&0\end{pmatrix}.
    \end{equation}
    We compute
    \[\begin{split}
        \sTr(1+Q)^{-1}Z_2 Q Z_2^\ast &= \sTr(1+q)^{-1} Z_2 q Z_2^\ast \\
        &\quad-\sTr (1+q)^{-1}\mu(1+q)^{-1} \Bigl(Z_2\mu Z_2^\ast-\mu (1+q)^{-1} Z_2 q Z_2^\ast\Bigr)\\
        &\quad- \sTr \bigl((1+q)^{-1}\mu\bigr)^3 (1+q)^{-1}\Bigl(Z_2\mu Z_2^\ast-\mu (1+q)^{-1} Z_2 q Z_2^\ast\Bigr),\\
        &=\Tr (1+x)^{-1} Z x Z^\ast-\abs{z}^2\Tr\frac{y}{1+y} - \Tr (\sigma Z \tau Z^\ast A+Z \sigma Z^\ast \tau A ) \\
        &\quad +\Tr\sigma\tau A C'-\Tr\sigma\tau A(\sigma Z \tau Z^\ast A+Z\sigma Z^\ast \tau A) + \Tr \sigma\tau A \sigma \tau A C',
    \end{split}\]
where we introduced matrices \(A,C'\) as in 
\[A \defeq \frac{(1+x)^{-1}}{1+y},\quad B\defeq \frac{x^{-1}}{y},\quad C'\defeq Z x Z^\ast (1+x)^{-1}+\abs{z}^2 \frac{y}{1+y},\]
as well as \(B\), which will be used in the sequel. In deriving these formulas we used 
     that \(q\) and \(Z_2\) have zero off-diagonal blocks and \(\mu\) has zero diagonal blocks,
to eliminate terms with odd powers of \(\mu\) after taking the supertrace, and that \(y\) is a scalar multiple of the identity. Similarly we find for the logarithmic terms  
    \[\begin{split} &\sTr\Bigl(\log(q+\mu)-\log(1+q+\mu)\Bigr)\\
        &= \sTr\log(q(1+q)^{-1})-\frac{1}{2}\sTr (q^{-1}\mu)^2-\frac{1}{4}\sTr (q^{-1}\mu)^4 \\
        &\qquad+\frac{1}{2}\sTr ((1+q)^{-1}\mu)^2+ \frac{1}{4}\sTr ((1+q)^{-1}\mu)^4\\
        &=\log\frac{\det(x)}{\det(1+x)}-\log\frac{\det(y)}{\det(1+y)}+\Tr\sigma\tau(A-B)+\frac{1}{2}\Tr (\sigma\tau A)^2-\frac{1}{2}\Tr (\sigma\tau B)^2 \end{split}\]
    and
    \[ -\frac{1}{2}\Tr\log(1-x^{-1}\sigma y^{-1}\tau)=\frac{1}{2}\Tr\sigma\tau B+  \frac{1}{4}\Tr (\sigma\tau B)^2.\]
    Whence
    \begin{equation} \label{expdersusyrealAr}\begin{split}
        \E \Tr(H-w)^{-1} &= \frac{N}{2(2\pi)^2 \ii} \int\diff x\oint \diff y \frac{\Tr(y)\det(y)^{1/2}}{\det(x)^{1/2}} \exp\Bigl(-\frac{N}{2}f(x)+\frac{N}{2}f(y)\Bigr) G(x,y,z)\\
        f(x)&\defeq -w \Tr x +\log\frac{\det(1+x)}{\det(x)}+\Tr Z x Z^\ast(1+x)^{-1}-2\abs{z}^2,\\
        G(x,y,z)&\defeq \partial_\sigma\exp\Bigl[\frac{N}{2}\Bigl(\Tr\sigma\tau \bigl(A(1-C')-\bigl(1-\frac{1}{N}\bigr)B\bigr)+\Tr(\sigma Z \tau Z^\ast A+ Z\sigma Z^\ast \tau A)  \\
         +\Tr \sigma\tau A&\Bigl(\sigma\tau A\bigl(\frac{1}{2}-C'\bigr)+\sigma Z \tau Z^\ast A+ Z\sigma Z^\ast \tau A\Bigr)-\frac{1}{2}(1- \frac{1}{N})\Tr (\sigma\tau B)^2\Bigr)\Bigr],
    \end{split}\end{equation}
    where \(\int\diff x=\int\diff x_{11}\diff \Re x_{12}\diff \Im x_{12}\) is the integral over matrices of the form  
    \[ x=\begin{pmatrix}
        \ii x_{11}& \ii x_{12}\\ \ii\overline{x_{12}} & \ii x_{11}
    \end{pmatrix} \]
    with \(x_{11}\in[0,\infty)\) and \(x_{12}\in\C\) with \(\abs{x_{12}}\le x_{11}\). The integral \(\oint\diff y=\oint\diff y_{11}\) is the 
    integral over scalar matrices \(y=y_{11} I\) with \(\diff y_{11}\) being the complex line integral over \(\abs{y_{11}}=N^{-1}\) in a counter-clockwise direction.

    To integrate out the Grassmannians we expand the exponential to second order, use the relation~\eqref{sigmatau} between \( \sigma\) and 
    \( \tau\), and use that  for  \( 2\times 2\)
    matrices \(X,Y\) which are constant on the diagonal (\( x_{11}= x_{22}\), \( y_{11}=y_{22}\))  we have the identities 
    \[ \begin{split}\frac{N^2}{8}\partial_\sigma \Tr^2(\sigma Z \tau Z^\ast X+ Z\sigma Z^\ast \tau X)&= - 4N^2\abs{z}^2(\Re z)^2\det(X)+N^2 \abs{z}^2(\Im z)^2 \Tr^2(X)\\
     \frac{N^2}{8} \partial_\sigma\Tr^2(\sigma\tau X)&=-N^2\det(X)\\
     \frac{N^2}{4} \partial_\sigma \Tr(\sigma\tau X)\Tr\Bigl(\sigma Z \tau Z^\ast Y+ Z\sigma Z^\ast \tau Y\Bigr)&= 2N^2(\Re z)^2 \bigl(\Tr(XY)  -\Tr(X)\Tr (Y)\bigr)\\
     &\quad+2\ii N^2 (\Im z) (\Re z) \bigl(X_{12}Y_{21}-X_{21} Y_{12}\bigr).\\
     \frac{N}{2}\partial_\sigma\Tr(\sigma\tau X\sigma \tau Y) &= N (\Tr(X)\Tr(Y)-\Tr(XY))\\
     \frac{N}{2}\partial_\sigma\Tr \sigma\tau X\Bigl(\sigma Z \tau Z^\ast X+ Z\sigma Z^\ast \tau X\Bigr)&=4N(\Re z)^2\det(X).
      \end{split} \]
      Whence we finally have the expression
      \begin{equation}\label{defbigGAr}
        \begin{split}
        G&=-N^2 \Bigl[\det\Bigl(A(1-C')-\bigl(1-\frac{1}{N}\bigr)B\Bigr) + (4 \abs{z}^2 (\Re z)^2 + 2 (\Re z)^2 (2 - \Tr C')) \det A \\
        &\qquad\qquad - 
       \abs{z}^2 (\Im z)^2 \Tr^2 A- 2 (\Re z)^2 \bigl(1 - \frac{1}{n}\bigr) \Bigl(\Tr A\Tr B-\Tr AB\Bigr) \\ 
       &\qquad\qquad - 2 (\Re z)^2(\Im z)^2 \det A^2 \det(1 + y) (4 \det(x) - \Tr^2x)\Bigr]\\
       &\quad + N\Bigl(\det(A)\Bigl(1+4(\Re z)^2-\Tr C'\Bigr)-\bigl(1-\frac{1}{N}\bigr)\det(B)\Bigr).
      \end{split}\end{equation}
We now rewrite~\eqref{expdersusyrealAr} by using the parametrizations
\begin{equation}
      \label{paramAr}
      x=  \begin{pmatrix}
      a & a\sqrt{1-\tau} e^{\ii \varphi} \\
      a\sqrt{1-\tau} e^{-\ii \varphi} & a
      \end{pmatrix} ,
      \qquad
      y= \begin{pmatrix}
      \xi & 0 \\
      0 & \xi
      \end{pmatrix} ,
      \end{equation}
with $a\in \ii \R _+$, $\tau\in [0,1]$, $\varphi\in [0,2\pi]$ and $\abs{\xi}=N^{-1}$. Since the integral over $\varphi\in [0,2\pi]$ is equal to $2\pi$ as a consequence of the fact that the functions $f,g,G_N$ defined below do not depend on $\varphi$, we have that
\begin{equation}
      \label{realsusyexplAAr}
      \E \Tr   [Y-w]^{-1}=\frac{N}{4\pi \ii} \oint \diff \xi \int_0^{\ii \infty} \diff a\int_0^1 \diff \tau \frac{\xi^2a}{\tau^{1/2}} e^{N[f(\xi)-g(a,\tau,\eta)]} G_N(a,\tau,\xi,z),
      \end{equation}
      where, using the notation $\eta\defeq\Im z$, the functions $f$ and $g$ are defined by
      \begin{equation}
        \label{eq:fffr}
        f(\xi)\defeq -w \xi+\log (1+\xi)-\log \xi-\frac{|z|^2}{1+\xi},      
      \end{equation}
      \begin{equation}  
          \begin{split}
            \label{bosonphfAr}
            g(a,\tau,\eta)&\defeq -wa+\frac{1}{2}\log [1+2a+a^2\tau]-\log a-\frac{1}{2}\log \tau\\
            &\quad -\frac{|z|^2(1+a)-2\eta^2 a^2 (1-\tau)}{1+2a+a^2\tau}.    
          \end{split}
      \end{equation}    
Note that $g(a,1,\eta)=f(a)$; in particular, we remark that $g(a,1,\eta)$ is independent of $\eta$ for any $a\in\C $. Furthermore, using the parameterizations in~\eqref{paramAr} the function $G_N\defeq G_{1,N}+G_{2,N}$ is given by
\begin{equation}
\label{eq:newbetG}
\begin{split}
        G_{1,N}={}&\Bigl(
        N^2\frac{p_{2,0,0}}{a^2 \xi ^2 (\xi +1)^2 \tau }-N\frac{ p_{1,0,0}}{a^2 \xi ^2 (\xi +1) \tau  }+\delta  N^2\frac{p_{2,0,1}}{a \xi  (\xi +1)^2 \tau  }- N \delta \frac{p_{1,0,1}}{a \xi  (\xi +1) \tau  } \\
        &\quad + N^2\delta ^2\frac{ p_{2,0,2}}{(\xi +1)^2}\Bigr)\times \Bigl((a^2\tau+2a+1)^2(\xi+1)^2\Bigr)^{-1}, \\
         G_{2,N}={}&\Bigl(
         N^2\eta ^2 \frac{p_{2,2,0}}{a \xi  (\xi +1)^3 \tau }-N\eta ^2 \frac{p_{1,2,0}}{a \xi  \tau  }+  N^2\eta ^2\delta \frac{p_{2,2,1}}{(\xi +1) }\Bigr)\times \Bigl((a^2\tau+2a+1)^2(\xi+1)^2\Bigr)^{-1},
\end{split}
\end{equation}
where \(p_{i,j,k}=p_{i,j,k}(a, \tau, \xi)\) are explicit polynomials in \(a,\tau,\xi\) which we defer to Appendix~\ref{appendix poly}, \(\eta\defeq\Im z\) and \(\delta\defeq1 -\abs{z}^2\). The indices $i,j,k$ in the definition of $p_{i,j,k}$ denote the $N$, $\eta$ and $\delta$ power, respectively. We split $G_N$ as the sum of $G_{1,N}$ and $G_{2,N}$ since $G_{1,N}$ depends only on $|z|$, whilst $G_{2,N}$ depends explicitly by $\eta= \Im [z]$, hence $G_{2,N}=0$ if $z\in \R $. We can thus rewrite \eqref{realsusyexplAAr} as
\begin{equation}
\label{eq:finnewform}
\E \Tr   [Y-w]^{-1}=\frac{N}{4\pi \ii} \oint_\Gamma \diff \xi \int_\Omega \diff \tau \int_{r\Lambda} \diff a \frac{\xi^2a}{\tau^{1/2}} e^{N[f(\xi,w)-g(a,\tau,\eta,w)]} G_N(a,\tau,\xi,\eta).
\end{equation}

\section{Asymptotic analysis in the complex case for the saddle point regime}
\label{sec:saddlepointanalysis}
For the density of states $\varrho_{Y^z}$ on the positive semi-axis \(E>0\) we expect a singular behaviour for \(E\approx 0\) and a 
square-root edge for \(E\approx \ed_+\). The singularity at \(E=0\) exhibits a phase transition in \(\delta\) at 0; for \(\delta>0\) the transition 
is between an \(E^{-1/3}\)--singularity for \(\delta=0\) and a 
\( \delta^{1/2}E^{-1/2}\)--singularity for \(0<\delta\le 1\), while for \(\delta<0\) the transition is between the \(E^{-1/3}\)--singularity for \(\delta=0\) and square-root edge in \(\ed_-\sim\abs{\delta}^3\) of slope \(\abs{\delta}^{-5/2}\). 
We now analyse the location of the critical point(s) \( x_*\), i.e.~the solutions to \(f'(x_*)=0\), as well as the asymptotics of 
the phase function \(f\) around them precisely in all of the above regimes. For the saddle-point approximation the second derivative 
\(f''(x_\ast)\) is of particular importance and we find that it can only vanish in the vicinity of \(E\approx \ed_+\) and \(E\approx \ed_- \vee 0\), and otherwise satisfies \(\abs{f''(x_\ast)}\gtrsim 1\).

The saddle point equation \(f'(x_*)=0\) leads to the simple cubic equation
\[ w x_\ast^3 + 2 w x_\ast^2+w x_\ast+\delta  x_\ast+1=0, \]
which is precisely the MDE equation from~\eqref{MDE Y}, whose explicit solution via Cardano's formula
reveals  that for \(E\in(\ed_-,\ed_+)\) there are two relevant critical points \(x_\ast,\overline{x_\ast}\) with \(\Re f(x_\ast)=\Re f(\overline{x_\ast})\), while for \(E\ge\ed_+\) or \(0\le E\le\ed_-\) there is one relevant critical point \(x_\ast\),  where \(x_\ast\) is given by
\begin{equation} \label{xstar} \begin{split}
    x_\ast ={}& \begin{cases}
        e^{-2\ii\pi/3}\sqrt[3]{q+\sqrt{q^2+p^3}}+e^{2\ii\pi/3}\sqrt[3]{q-\sqrt{q^2+p^3}}-\frac{2}{3}, &E\le \ed_-\\
        e^{2\ii\pi/3}\sqrt[3]{q+\sqrt{q^2+p^3}}+e^{-2\ii\pi/3}\sqrt[3]{q-\sqrt{q^2+p^3}}-\frac{2}{3}, &\ed_-\le E\le \ed_+\\
        \sqrt[3]{q+\sqrt{q^2+p^3}}+\sqrt[3]{q-\sqrt{q^2+p^3}}-\frac{2}{3}, & E\ge \ed_+
    \end{cases}\\
    q\defeq{}& \frac{\delta}{3E}+\frac{1}{27}-\frac{1}{2E},\quad p\defeq \frac{\delta}{3E}-\frac{1}{9},
\end{split} \end{equation}
where \(q^2+p^3> 0\) as long as \(E\in(\ed_-,\ed_+)\) and \(q^2+p^3<0\) for \(E>\ed_+\) or \(E<\ed_-\). Here we chose the branch of the cubic root such that \(\sqrt[3]{\R}=\R\) and that \(\sqrt[3]{z}\) for \(z\in\C\setminus\R\) is the cubic root with the maximal real part. Note that the choice of the cubic root implies \(\Im x_\ast\ge 0\) and \(x_\ast=x_\ast(E)=m^z(E+\ii 0)\), where \(m^z\) has been defined in~\eqref{MDE Y}. 

Before concluding this section with the proof of Proposition~\ref{prop local law}, we collect certain asymptotics of the critical point \(x_\ast\) and the phase function \(f\) in its vicinity, which will also be used in the main estimates of the present paper in Sections~\ref{sec cplx}--\ref{sec real}. In the edges \(\ed_\pm\) the critical points have the simple expressions 
\[x_\ast(\ed_\pm) = -\frac{2}{3\pm \sqrt{9-8 \delta }}\]
and satisfy \(x_\ast(\ed_+)\sim -1 \) and \(x_\ast(\ed_-) = -3/(2\delta) \bigl[1+\landauO{\abs{\delta}}\bigr]\).
Elementary expansions of~\eqref{xstar} for $E$ near the edges
reveal  the following asymptotics of $x_*$ in the various regimes.

\subsubsection*{Regime \(E\approx \ed_+\)}
\begin{figure}
    \centering 
    \begin{subfigure}[b]{.33\linewidth}
        \centering 
        \begin{asy}
            size(4cm,5cm,IgnoreAspect);
            pair xyMin=(-1.51,-1.01);  
            pair xyMax=(1,1);
            var delta = 0.;
            var ee = 5.75; 
            pair xast = (-0.321061, 0.158104 );
            real ff(pair z) {return xpart( log(1+z)-log(z) -(1-delta)/(1+z) - ee*z ) ;}
            var c = 4;
            
            real f(real x, real y) {return ff((x,y));}
            var ffS = ff(xast);
            picture bar;
            bounds range=image(f,Range(-3,3),xyMin,xyMax,N,Palette);
            palette(range,(xyMin.x,xyMax.y+.2),(xyMax.x,xyMax.y+.4),Top,Palette,PaletteTicks(N=2,n=2));
            defaultpen(.5bp);
            draw(contour(ff,xyMin,xyMax,new real[] {ffS},N,operator ..),white);
            draw((0,0)--xast--(-1.5,1));
            draw((-.5,0)..xast..(.5,0)..(xast.x,-xast.y)..(-.5,0),dashed);             
            xaxis(Bottom,xyMin.x,xyMax.x,RightTicks(Step=1,step=0.5),above=true);
            yaxis(Left,xyMin.y,xyMax.y,LeftTicks(Step=1,step=1),above=true);
        \end{asy}  
        \caption{\(E_+<0 \)} 
    \end{subfigure}%
    \begin{subfigure}[b]{.33\linewidth}
        \centering
        \begin{asy}
            size(4cm,5cm,IgnoreAspect);
            pair xyMin=(-1.51,-1.01);  
            pair xyMax=(1,1);
            var delta = 0.;
            var ee = 6.75; 
            pair xast = (-0.3333333, 0. );
            real ff(pair z) {return xpart( log(1+z)-log(z) -(1-delta)/(1+z) - ee*z ) ;}
            var c = 4;
            
            real f(real x, real y) {return ff((x,y));}
            var ffS = ff(xast);
            picture bar;
            bounds range=image(f,Range(-3,3),xyMin,xyMax,N,Palette);
            palette(range,(xyMin.x,xyMax.y+.2),(xyMax.x,xyMax.y+.4),Top,Palette,PaletteTicks(N=2,n=2));
            defaultpen(.5bp);
            draw(contour(ff,xyMin,xyMax,new real[] {ffS},N,operator ..),white);
            draw((0,0)--(-.3,0)--(-1.5,1));
            draw(xast..(.5,.2)..(.5,-.2)..xast,dashed);             
            xaxis(Bottom,xyMin.x,xyMax.x,RightTicks(Step=1,step=0.5),above=true);
        \end{asy}  
        \caption{\(\abs[0]{E_+} = 0 \)} 
    \end{subfigure}%
    \begin{subfigure}[b]{.33\linewidth}
        \centering
        \begin{asy}
            size(4cm,5cm,IgnoreAspect);
            pair xyMin=(-1.51,-1.01);  
            pair xyMax=(1,1);
            var delta = 0.;
            var ee = 7.75; 
            pair xast = (-0.203273, 0 );
            real ff(pair z) {return xpart( log(1+z)-log(z) -(1-delta)/(1+z) - ee*z ) ;}
            var c = 4;
            
            real f(real x, real y) {return ff((x,y));}
            var ffS = ff(xast);
            picture bar;
            bounds range=image(f,Range(-3,3),xyMin,xyMax,N,Palette);
            palette(range,(xyMin.x,xyMax.y+.2),(xyMax.x,xyMax.y+.4),Top,Palette,PaletteTicks(N=2,n=2));
            defaultpen(.5bp);
            draw(contour(ff,xyMin,xyMax,new real[] {ffS},N,operator ..),white);
            draw((0,0)--(-.4,0)--(-1.5,1));
            draw(xast..(.5,.5)..(.5,-.5)..xast,dashed);             
            xaxis(Bottom,xyMin.x,xyMax.x,RightTicks(Step=1,step=0.5),above=true);
            yaxis(Right,xyMin.y,xyMax.y,RightTicks(Step=1,step=1),above=true);
        \end{asy}  
        \caption{\( E_+>0 \)} 
    \end{subfigure}
    \caption{Contour plot of \(\Re f(x)\) in the regime \(E\approx \ed_+\). The solid white lines represent the level set \(\Re f(x)=\Re f(x_\ast)\), while the solid and dashed black lines represent the chosen contours for the \(x\)- and \(y\)-integrations, respectively.}
    \label{fig:cont_ep}
\end{figure}
Close to the spectral edge \(E\approx \ed_+\) we have the asymptotic expansion 
\begin{subequations}
\begin{equation} x_\ast = x_\ast(\ed_+) + \gamma_+ \sqrt{E_+}\Bigl(1+\landauO{\abs[0]{E_+^{1/2}}}\Bigr) \label{xast} \end{equation}
in \(E_+\defeq E-\ed_+\), where \(\gamma_+\) was defined in~\eqref{m asymp}. The location of saddle point(s) in the regime \(E\approx \ed_+\) is depicted in Figure~\ref{fig:cont_ep}. The second derivative of \( f\) is asymptotically given by 
\begin{equation}f''(x_\ast) = \frac{2\sqrt{E_+}}{\gamma_+}\Bigl(1+\landauO{\abs[0]{E_+^{1/2}}}\Bigr).\label{fpp e+}\end{equation}
\end{subequations}

\subsubsection*{Regime \(E\approx 0\) in the case \(\delta\ge 0\)}
\begin{figure}
    \centering
    \begin{subfigure}[b]{.5\linewidth}
        \centering
        \begin{asy}
            size(6cm,5cm,IgnoreAspect);
            pair xyMin=(-3,-6);  
            pair xyMax=(6,6);
            var delta = .0000001;
            var ee = .01; 
            pair xast = (1.66553 , 3.99801 );
            real ff(pair z) {return xpart( log(1+z)-log(z) -(1-delta)/(1+z) - ee*z ) ;}
            var c = 4;
            
            real f(real x, real y) {return ff((x,y));}
            var ffS = ff(xast);
            picture bar;
            bounds range=image(f,Range(-.1,.1),xyMin,xyMax,N,Palette);
            palette(range,(xyMin.x,xyMax.y+1),(xyMax.x,xyMax.y+2),Top,Palette,PaletteTicks(N=2,n=2));
            defaultpen(.5bp);
            draw(contour(ff,xyMin,xyMax,new real[] {ffS},N,operator ..),white);
            draw((0,0)--(c,0)--(c+(xast.x-c)*xyMax.y/xast.y,xyMax.y));
            draw((-.7,0)..(-.5,2)..xast..(5.8,0)..(xast.x,-xast.y)..(-.5,-2)..(-.7,0),dashed);             
            xaxis(Bottom,xyMin.x,xyMax.x,RightTicks(Step=3,step=1.5),above=true);
            yaxis(Left,xyMin.y,xyMax.y,LeftTicks(Step=5,step=2.5),above=true);
        \end{asy}  
        \caption{\(0<\delta\ll E^{1/3}\ll 1\)} 
    \end{subfigure}%
    \begin{subfigure}[b]{.5\linewidth}
        \centering
        \begin{asy}
            size(6cm,5cm,IgnoreAspect);
            pair xyMin=(-3,-6);  
            pair xyMax=(6,6);
            var delta = .5;
            var ee = .04;
            pair xast = (-0.0586703, 3.64358 );
            real ff(pair z) {return xpart( log(1+z)-log(z) -(1-delta)/(1+z) - ee*z ) ;}
            var c = 2;

            real f(real x, real y) {return ff((x,y));}
            var ffS = ff(xast);
            picture bar;
            bounds range=image(f,Range(-.1,.1),xyMin,xyMax,N,Palette);
            palette(range,(xyMin.x,xyMax.y+1),(xyMax.x,xyMax.y+2),Top,Palette,PaletteTicks(N=2,n=2));
            defaultpen(.5bp);
            draw(contour(ff,xyMin,xyMax,new real[] {ffS},N,operator ..),white);
            draw((0,0)--(c,0)--(c+(xast.x-c)*xyMax.y/xast.y,xyMax.y));
            draw((-.7,0)..(-.5,2)..xast..(5,0)..(xast.x,-xast.y)..(-.5,-2)..(-.7,0),dashed);             
            xaxis(Bottom,xyMin.x,xyMax.x,RightTicks(Step=3,step=1.5),above=true);
            yaxis(Right,xyMin.y,xyMax.y,RightTicks(Step=5,step=2.5),above=true);
            \end{asy}  
        \caption{\(0< E^{1/3}\ll \delta\)} 
    \end{subfigure}
    \caption{Contour plot of \(\Re f(x)\) for \(\delta>0\) in the regime \(E\approx 0\). The solid white lines represent the level set \(\Re f(x)=\Re f(x_\ast)\), while the solid and dashed black lines represent the chosen contours for the \(x\)- and \(y\)-integrations, respectively.}\label{fig:cont_min}
    \label{fig:contn}
\end{figure}
For \(E\approx 0\) we have the asymptotic expansions 
\begin{subequations}
\begin{equation} \label{xast >0}
x_\ast = E^{-1/3} \Psi\Bigl( \frac{\delta}{E^{1/3}} \Bigr)\biggl[1+\landauO{\delta+E^{1/3}}\biggr], 
\end{equation}
where \(\Psi(\lambda)\) is the unique solution to the cubic equation 
\[1 + \lambda\Psi(\lambda)+\Psi(\lambda)^3 =0, \quad \Re\Psi(\lambda)> 0,\quad \Im\Psi(\lambda)>0, \quad \lambda\ge 0.\]
The explicit function \(\Psi(\lambda)\) has the asymptotics 
\[\lim_{\lambda\searrow 0}\Psi(\lambda)=\Psi(0)=e^{\ii\pi/3},\quad \lim_{\lambda\to\infty} \frac{\Psi(\lambda)}{\sqrt{\lambda}}= \ii.\]
Thus it follows that
\begin{equation} \label{eq:statpoi} x_\ast = \Bigl(\ii \sqrt{\frac{\delta}{E}}-1+\frac{1}{2\delta}\Bigr)\Bigl(1+\landauO{\frac{E}{\delta^3}}\Bigr)=\Bigl(\frac{e^{\ii\pi/3}}{E^{1/3}}-\frac{2}{3}\Bigr)\Bigl(1+\landauO{E^{2/3}+\frac{\delta}{E^{1/3}}}\Bigr),\end{equation}
where the first expansion is informative in the \(E\ll\delta^3\), and the second one in the \(E \gg \delta^3\) regime. The location of saddle point(s) in the regime \(E\approx 0 \) is depicted in Figure~\ref{fig:cont_min}. For the second derivative we have the expansions 
\begin{equation}f''(x_\ast)=3e^{2\ii\pi/3}E^{4/3}\Bigl(1+\landauO{E+\frac{\delta}{E^{1/3}}}\Bigr)=2\ii\frac{E^{3/2}}{\delta^{1/2}}\Bigl(1+\landauO{\frac{E}{\delta^3}}\Bigr)\label{fpp 0}\end{equation}
and similarly for higher derivatives, \(\abs{f^{(k)}(x_\ast)}\sim E^{(2+k)/3}\wedge E^{(k+1)/2}\delta^{-(k-1)/2}\) for \(k\ge 3\). 
\end{subequations}

\subsubsection*{Regime \(E\approx \ed_-\) in the case \(\delta<0\)}
\begin{figure}
    \centering
    \begin{subfigure}[b]{.33\linewidth}
        \centering
        \begin{asy}
            size(4cm,6cm,IgnoreAspect);
            pair xyMin=(-3,-6);  
            pair xyMax=(6,6);
            var delta = .00000001;
            var ee = .01; 
            pair xast = (1.66553,3.998 );
            real ff(pair z) {return xpart( log(1+z)-log(z) -(1-delta)/(1+z) - ee*z )  ;}
            
            real f(real x, real y) {return ff((x,y));}
            var ffS = ff(xast);
            picture bar;
            bounds range=image(f,Range(-.1,.1),xyMin,xyMax,N,Palette);
            palette(range,(xyMin.x,xyMax.y+1),(xyMax.x,xyMax.y+2),Top,Palette,PaletteTicks(N=2,n=2));
            defaultpen(.5bp);
            draw(contour(ff,xyMin,xyMax,new real[] {ffS},N,operator ..),white);
            draw((0,0)--(sqrt(xast.x^2+xast.y^2),0)--(sqrt(xast.x^2+xast.y^2)*(1-xyMax.y/xast.y)+xast.x*xyMax.y/xast.y,xyMax.y));          
            draw((-.5,0)..xast..(5.8,0)..(xast.x,-xast.y)..(-.5,0),dashed);  
            xaxis(Bottom,xyMin.x,xyMax.x,RightTicks(Step=3,step=0.5),above=true);
            yaxis(Left,xyMin.y,xyMax.y,LeftTicks(Step=5,step=1),above=true);
        \end{asy}  
        \caption{\(\abs{\delta}^3\ll E_-\)} 
    \end{subfigure}%
    \begin{subfigure}[b]{.33\linewidth}
        \centering
        \begin{asy}
            size(4cm,6cm,IgnoreAspect);
            pair xyMin=(-3,-6);  
            pair xyMax=(6,6);
            var delta = -.4;
            var ee = 0.00600295;
            pair xast = (4.058,0.0110461);
            real ff(pair z) {return xpart( log(1+z)-log(z) -(1-delta)/(1+z) - ee*z ) ;}
            
            real f(real x, real y) {return ff((x,y));}
            var ffS = ff(xast);
            picture bar;
            bounds range=image(f,Range(-.2,.2),xyMin,xyMax,N,Palette);
            palette(range,(xyMin.x,xyMax.y+1),(xyMax.x,xyMax.y+2),Top,Palette,PaletteTicks(N=2,n=2));
            defaultpen(.5bp);
            draw(contour(ff,xyMin,xyMax,new real[] {ffS},N,operator ..),white);
            draw((0,0)--xast--(1.1*xast.x,.2*xast.x)--(xast.x-(xyMax.y-xast.y)/3,xyMax.y));
            draw(xast..(1,-1.5)..(-.5,0)..(1,1.5)..xast,dashed);
            xaxis(Bottom,xyMin.x,xyMax.x,RightTicks(Step=3,step=0.5),above=true);
            \end{asy}  
        \caption{\(\abs[0]{E_-}\ll \abs{\delta}^3\)} 
    \end{subfigure}%
    \begin{subfigure}[b]{.33\linewidth}
        \centering
        \begin{asy}
            size(4cm,6cm,IgnoreAspect);
            pair xyMin=(-3,-6);  
            pair xyMax=(6,6);
            var delta = -.4;
            var ee = 0.00500285;
            pair xast = (3.21361 ,0);
            real ff(pair z) {return xpart( log(1+z)-log(z) -(1-delta)/(1+z) - ee*z ) ;}
            
            real f(real x, real y) {return ff((x,y));}
            var ffS = ff(xast);
            picture bar;
            bounds range=image(f,Range(-.2,.2),xyMin,xyMax,N,Palette);
            palette(range,(xyMin.x,xyMax.y+1),(xyMax.x,xyMax.y+2),Top,Palette,PaletteTicks(N=2,n=2));
            defaultpen(.5bp);
            draw(contour(ff,xyMin,xyMax,new real[] {ffS},N,operator ..),white);
            draw((0,0)--(5,0)--(2,xyMax.y));
            draw(xast..(1,-1.5)..(-.5,0)..(1,1.5)..xast,dashed);
            xaxis(Bottom,xyMin.x,xyMax.x,RightTicks(Step=3,step=0.5),above=true);
            yaxis(Right,xyMin.y,xyMax.y,RightTicks(Step=5,step=1),above=true);
            \end{asy}  
        \caption{\( E_-\ll -\abs{\delta}^3\)} 
    \end{subfigure}\\
    \begin{subfigure}[b]{.49\linewidth}
        \centering
        \begin{asy}
            size(6cm,6cm,IgnoreAspect);
            pair xyMin=(-3,-6);  
            pair xyMax=(6,6);
            var delta = -.3;
            var ee = 0.0033048;
            pair xast = (4.89538,1.30331);
            real ff(pair z) {return xpart( log(1+z)-log(z) -(1-delta)/(1+z) - ee*z ) ;}
            
            real f(real x, real y) {return ff((x,y));}
            var ffS = ff(xast);
            picture bar;
            bounds range=image(f,Range(-.2,.2),xyMin,xyMax,N,Palette);
            palette(range,(xyMin.x,xyMax.y+1),(xyMax.x,xyMax.y+2),Top,Palette,PaletteTicks(N=2,n=2));
            defaultpen(.5bp);
            draw(contour(ff,xyMin,xyMax,new real[] {ffS},N,operator ..),white);
            draw((0,0)--(.95*xast.x,0)--xast--(1.05*xast.x,2*xast.y)--(.9*xast.x,xyMax.y));
            xaxis(Bottom,xyMin.x,xyMax.x,RightTicks(Step=3,step=0.5),above=true);
            draw((5.5,0)..(xast.x,-xast.y)..(1,-1.5)..(-.5,0)..(1,1.5)..xast..(5.5,0),dashed);
            yaxis(Left,xyMin.y,xyMax.y,LeftTicks(Step=5,step=1),above=true);
            \end{asy}  
        \caption{\( E_-\sim \abs{\delta}^3\)} 
    \end{subfigure}%
    \begin{subfigure}[b]{.49\linewidth}
        \centering
        \begin{asy}
            size(6cm,6cm,IgnoreAspect);
            pair xyMin=(-3,-6);  
            pair xyMax=(6,6);
            var delta = -.3;
            var ee = 0.0023048;
            pair xast = (4.21283,0);
            real ff(pair z) {return xpart( log(1+z)-log(z) -(1-delta)/(1+z) - ee*z ) ;}
            
            real f(real x, real y) {return ff((x,y));}
            var ffS = ff(xast);
            picture bar;
            bounds range=image(f,Range(-.2,.2),xyMin,xyMax,N,Palette);
            palette(range,(xyMin.x,xyMax.y+1),(xyMax.x,xyMax.y+2),Top,Palette,PaletteTicks(N=2,n=2));
            defaultpen(.5bp);
            draw(contour(ff,xyMin,xyMax,new real[] {ffS},N,operator ..),white);
            draw((0,0)--(5.8,0)--(.9*xast.x,xyMax.y));
            draw(xast..(1.5,-2)..(-.5,0)..(1.5,2)..xast,dashed);
            xaxis(Bottom,xyMin.x,xyMax.x,RightTicks(Step=3,step=0.5),above=true);
            yaxis(Right,xyMin.y,xyMax.y,RightTicks(Step=5,step=1),above=true);
            \end{asy}  
        \caption{\( E_-\sim -\abs{\delta}^3\)}
    \end{subfigure}%
    \caption{Contour plot of \(\Re f(x)\) in the regime \(E\approx \ed_-\). The solid white lines represent the level set \(\Re f(x)=\Re f(x_\ast)\), while the solid and dashed black lines represent the chosen contours for the \(x\)- and \(y\)-integrations, respectively.}
    \label{fig:cont_em}
\end{figure}
Around the spectral edge \(\ed_-\) the critical point admits the asymptotic expansion
\begin{subequations}
\begin{equation} \label{eq:saddlenegdelta}  \begin{split}x_\ast&= x_\ast(\ed_-) + \gamma_- \Bigl(1+\landauO{\frac{\abs{E}^{1/2}}{\delta^{3/2}}}\Bigr) \begin{cases}\ii\sqrt{\abs{E_-}}, & E_-\ge 0\\     -\sqrt{\abs{E_-}}, & E_-\le 0 \end{cases} 
\\
    &= \frac{1}{E^{1/3}}\Bigl(e^{\ii\pi/3}+\frac{\ii}{3}e^{\ii\pi/3}\frac{\delta}{E^{1/3}}\bigl(1+\landauO{\abs{E}^{1/3}}\bigr)+\landauO{\frac{\abs{\delta}^2}{E^{2/3}}}\Bigr),
\end{split}
\end{equation}
where \(E_-\defeq E-\ed_-\), and these separate expansions are relevant in the \(\abs[0]{E}\ll \abs{\delta}^3\) and \(\abs[0]{E}\gg \abs{\delta}^3\) regimes, respectively. The location of saddle point(s) in the regime \(E\approx \ed_-\) is depicted in Figure~\ref{fig:cont_em}. The second derivative around \(x_\ast\) is given by 
\begin{equation} \begin{split}
    f''(x_\ast) &= \frac{2}{\gamma_-}\Bigl(1+\landauO{\abs{E_-}^{1/2} \abs{\delta}^{-3/2}}\Bigr) \times \begin{cases}
        \sqrt{\abs{E_-}},& E_-\le 0\\ -\ii\sqrt{\abs{E_-}}, & E_-\ge 0\end{cases}\\
        &= 3e^{2\ii\pi/3}E^{4/3}\Bigl(1+\landauO{E+\frac{\abs{\delta}}{E^{1/3}}}\Bigr),
\end{split} \label{fpp e-}\end{equation}
with \(\gamma_-\sim\abs{\delta}^{-5/2}\).
\end{subequations}

\begin{proof}[Proof of Proposition~\ref{prop local law}]
As the functions \(f\) and \(G\) in~\eqref{H-z cplx superbos} are meromorphic we are free to deform the contours for the \(x\)- and \(y\)-integrals as long as we are not crossing \(0\) or \(-1\) and the \(x\)-contour goes out from \(0\) in the "right" direction (in the region $\Re[x]<0,\Im[x]>0$ in Figure~\ref{fig:cont_ep}, and in the region $\Re[x]>|\Im[x]|$ in Figures~\ref{fig:contn}--\ref{fig:cont_em}). It is easy to see that the contours can always be deformed in such a way that \(\Re f(x)>\Re f(x_\ast)=\Re f(\overline{x_\ast})\) and \(\Re f(y)<\Re f(x_\ast)\) for all \(x,y \ne x_\ast,\overline{x_\ast}\), see Figures~\ref{fig:cont_ep}--\ref{fig:cont_em} for an illustration of the chosen contours. 

We now compute 
the integral~\eqref{H-z cplx superbos} in the large $N$ limit when $E$ is near the edges. 
In certain regimes of the parameters $N$, $E$ and $\delta$ 
a saddle point analysis is applicable after a suitable contour deformation. In most cases, the result is
a point evaluation of the integrand at the saddle points. In some  transition regimes of the parameters
the saddle point analysis only  allows us  to explicitly scale out some combination of the parameters
and leaving an integral depending only on a reduced set of  rescaled parameters.

We recall the   classical quadratic saddle point approximation for holomorphic functions \(f(z),g(z)\) such that \(f(z)\) has a unique critical point in some \(z_\ast\), and that \(\gamma\) can be deformed to go through \(z_\ast\) in such a way that \(\Re f(z)<\Re f(z_\ast)\) for all \(\gamma\ni z\ne z_\ast\). Then for large \(\lambda\gg1\) the saddle point approximation is given by
\begin{equation} \int_\gamma g(z) e^{\lambda f(z)}\diff z = \pm g(z_\ast) e^{\lambda f(z_\ast)} \sqrt{\frac{2\pi}{\lambda\abs{f''(z_\ast)}}} \ii e^{-\frac{\ii}{2} \arg f''(z_\ast)}\Bigl(1+\landauO{\frac{1}{\lambda}}\Bigr),\label{quad saddle}\end{equation}
where \(\pm\) is determined by the direction of \(\gamma\) through \(z_\ast\) with \(+\) corresponding to the direction parallel to \(\ii \exp(-\frac{\ii}{2}\arg f''(z_\ast))\). 
This formula is applicable, i.e.~we can use
 point evaluation in the \emph{saddle point regime}, whenever the lengthscale \(\ell_f \sim (N |f''(x_\ast)|)^{-1/2}\) of the exponential decay
 from the quadratic approximation of the
 phase function is much smaller than the scale \(\ell_g \sim |g(x_\ast)|/|\nabla g(x_\ast)|\) on which \(g\) is essentially 
 unchanged.
 For our integral~\eqref{H-z cplx superbos} we thus need to check 
  the condition 
\begin{equation}\label{saddlecond}
\frac{1}{\sqrt{N \abs{f''(x_\ast)}}} \ll \abs{\frac{\nabla (y G(x,y))}{y G(x,y)}\Big\rvert_{(x,y)=(x_\ast,x_\ast)} }^{-1}  
\end{equation}
in all regimes separately.

In the regime \(E\approx \ed_+\), using the asymptotics for $x_\ast$ from~\eqref{xast} 
 and~\eqref{fpp e+}, the quadratic saddle point approximation is valid if 
\[ \frac{1}{\sqrt{N \abs{E_+}^{1/2} }} \ll \abs{E_+}^{1/2},\]
i.e.~if \(\abs{E_+}\gg N^{-2/3}\). Here the length-scale \(\abs{E_+}^{1/2}\) represents the length-scale on which \((x,y)\mapsto yG(x,y)\) is essentially constant which can be obtained by explicitly computing  the log-derivative
\[\abs{\frac{\nabla (y G(x,y))}{y G(x,y)}\Big\rvert_{(x,y)=(x_\ast,x_\ast)} } \sim \abs{E_+}^{-1/2}.\]
 Similar calculations yield that for \(E\approx 0\) and \(\delta\ge 0\) the quadratic saddle point approximation is valid if
\[\frac{1}{\sqrt{N\bigl(E^{4/3}\wedge E^{3/2}\delta^{-1/2}\bigr)}}\ll E^{-1/3}\vee \delta^{1/2}E^{-1/2},
\quad \text{i.e.} \quad  E \gg N^{-3/2} \wedge N^{-2}\delta^{-1}, \]
while for \(E\approx\ed_-\) and \(\delta<0\) the condition~\eqref{saddlecond} reads
\[\frac{1}{\sqrt{N\bigl(E^{4/3}\vee\abs{E_-}^{1/2}\abs{\delta}^{5/2}\bigr)}}\ll E^{-1/3}\wedge \abs{E_-}^{1/2}\abs{\delta}^{-5/2},
\quad \text{i.e.} \quad \abs{E_-}\gg N^{-2/3}\abs{\delta}^{5/3}, \]
recalling that \(E= E_- +\ed_-\) and  \(\ed_- \sim \delta^3\) from~\eqref{m asymp}.

In these regimes we can thus  apply~\eqref{quad saddle}
to~\eqref{H-z cplx superbos} and using that 
\[ G(x_\ast,x_\ast)=f''(x_\ast),\qquad G(x_\ast,\overline{x_\ast})=0,\]
as follows from explicit computations, we thus finally conclude~\eqref{eq:fineqsaddle}. Here the error terms in~\eqref{eq:fineqsaddle} follow from~\eqref{quad saddle} by choosing \(\lambda\sim (\ell_g/\ell_f)^2\) according to asymptotics of the second derivatives and log-derivatives above. More precisely, for example in the second case \(E\approx 0\) and \(E^{1/3}\gg\delta>0\), the phase function \(f\) is approximately given by 
\[f(x)\approx E^{2/3}\Bigl[\frac{1}{2(E^{1/3}x)}-E^{1/3}x\Bigr],\]
while \(G\) can asymptotically be written as 
\[ y G(x,y)\approx x_\ast G(x_\ast,x_\ast) +3 E^{4/3} e^{2\ii\pi/3}\Bigl(2 (x-x_\ast)+(y-x_\ast)\Bigr) +\landauO{E^{5/3}\Bigl(\abs{x-x_\ast}^2+\abs{y-x_\ast}^2\Bigr)}.\]
Thus we make the change of variables \(x= x_\ast + E^{-1/3}x'\), \(y=x_\ast+ E^{-1/3}y'\) 
to find 
\[ \begin{split}&\frac{N^{2}}{2\pi \ii} E^{-2/3} \int\diff x' \int\diff y' e^{-N E^{-2/3} f''(x_\ast)\Bigl( \frac{x'^2}{2} -\frac{y'^2}{2}\Bigr)-N E^{-1} f'''(x_\ast)\Bigl( \frac{x'^2}{6} -\frac{y'^2}{6}\Bigr)+ \landauO{N E^{2/3} (\abs{x'}^4+\abs{y'}^4) }} \\&\qquad\qquad \qquad\qquad\qquad \times\Bigl( x_\ast G(x_\ast,x_\ast) + 3 E e^{2\ii\pi/3} (y'+2x') + \landauO{E (\abs{x'}^2+\abs{y'}^2)}\Bigr)\\
&\quad= x_\ast\Bigl( 1+ \landauO{\frac{1}{N E^{2/3}}} \Bigr),\end{split}\]
where we used that \(G(x_\ast,x_\ast)\sim E^{4/3}\). The other cases in~\eqref{eq:fineqsaddle} can be checked similarly. 
\end{proof}

\section{Derivation of the \(1\)-point function in the critical regime for the complex case}\label{sec cplx}
In this section we prove Theorem~\ref{thm cplx}, i.e.~we study $\E\Tr[Y-w]^{-1}$, with $w=E+\ii \epsilon$, $1\gg \abs{E}\gg \epsilon>0$, for $E$ so close to $0$ such that $\abs{E}$ is smaller or comparable with the eigenvalues scaling around $0$. We will first consider the case \(\delta\ge 0\) and afterwards explain the necessary changes in the regime \(-C N^{-1/2}\le \delta<0\). 

\subsection{Case $0\le \delta \le 1$.}
\label{sec:posdec}
In the following of this section we assume that $E>0$, since we are interested in the computations of~\eqref{H-z cplx superbos} for $E=\Re[w]$ inside the spectrum of $Y$. In order to study the transition between the local law regime, that is considered in Section \ref{sec:saddlepointanalysis}, and the regime when the main contribution to~\eqref{H-z cplx superbos} comes from the smallest eigenvalue of $Y$, we define the parameter
\begin{equation}
\label{eq:thres}
c(N)=c(N,\delta)\defeq \frac{1}{N^{3/2}}\wedge \frac{1}{\delta N^2}.
\end{equation}

In particular, in the regime $E\gg c(N)$ the double integral in~\eqref{H-z cplx superbos} is computed by saddle point analysis in ~\eqref{eq:fineqsaddle}, i.e.~the main contribution comes from the regime around the stationary point $x_*$ of $f$, with $x_*$ defined in~\eqref{eq:statpoi}, whilst for $E\lesssim c(N)$ the main contribution to~\eqref{H-z cplx superbos} comes from a larger regime around the stationary point $x_*$. From now on we assume that $E\lesssim c(N)$. In the following we denote the leading order of the stationary point $x_*$ by
\begin{equation}
\label{eq:impsadd}
z_*= z_*(E, \delta) \defeq E^{-1/3}\Psi(\delta E^{-1/3}),
\end{equation}
where \(\Psi(\lambda)\) was defined in~\eqref{xast >0} and has the asymptotics \(\Psi(0)=e^{\ii\pi/3}\) and \(\Psi(\lambda)/\sqrt{\lambda}\to\ii\) as \(\lambda\to\infty\). Note that $|z_*|\gg 1$ for any $E\ll 1$, $0\le \delta\le 1$. For this reason, we expect that the main contribution to the double integral in~\eqref{H-z cplx superbos} comes from the regime when $|x|$ and $|y|$ are both large, say $|x|, |y|\ge N^\rho$, for some small fixed $0< \rho<1/2$. Later on in this section, see Lemma~\ref{lem:expsmall}, we prove that the contribution to~\eqref{H-z cplx superbos} in the regime when either $|x|$ or $|y|$ are smaller than $N^\rho$ is exponentially small. In order to get the asymptotics in~\eqref{eq:onepointest}, is not affordable to estimate the error terms in the Taylor expansion by absolute value. In particular, it is not affordable to estimate the integral of $e^{Nf(y)}$ over $\Gamma$ by absolute value, hence the improved bound in~\eqref{eq:exactyint} is needed. To make our writing easier, for any $R\in\N  $, $R\ge 2$, we introduce the notation 
\begin{equation}
\label{eq:diffO}
\mathcal{O}^\#\big(x^{-R}\big)\defeq \{ g\in \mathcal{P}_R\}, \quad \mathcal{O}^\#\big((x,y)^{-R}\big)\defeq \{ g\in \mathcal{Q}_R\},
\end{equation}
where $\mathcal{P}_R$ and $\mathcal{Q}_R$ are defined as Laurent series of order at least $R$ around infinity, i.e.
\[
\begin{split}
\mathcal{P}_R &\defeq\Set{ g\colon\C \to \C  | g(x)= \sum_{\alpha\ge R} \frac{\tilde{c}_{\alpha}}{x^\alpha }, \,\, \text{with} \,\, |\tilde{c}_{\alpha}|\le C^{\alpha}, \,\, \text{if}\,\,  |x|\ge 2C }, \\
\mathcal{Q}_R &\defeq\Set{ \widetilde{g}\colon\C \times \C \to \C  | \widetilde{g}(x,y)= \sum_{\alpha, \beta \ge 1, \alpha+ \beta \ge R} \frac{c_{\alpha,\beta}}{x^\alpha y^\beta}, \,\, \text{with} \,\, |c_{\alpha,\beta}|\le C^{\alpha+\beta}, \,\, \text{if}\,\, |x|, |y|\ge 2C},
\end{split}
\]
for some constant $C>0$ that is implicit in the $\mathcal{O}^\#$ notation. Here $\alpha, \beta$ are integer exponents. Note that $\mathcal{O}^\#(\abs{x}^{-R})=\mathcal{O}(\abs{x}^{-R})$ for any \(x\in\C\). Then, we expand the phase function $f$ for large argument as follows
\begin{subequations}
    \begin{equation}
        \label{eq:expansionf}
        f(x)=g(x)+\mathcal{O}^\#\big(x^{-3}+\delta x^{-2}\big),\quad g(x)\defeq -(E+\ii \epsilon) x+\frac{\delta}{x}+\frac{1}{2x^2}.
        \end{equation}
        and for large $x$ and $y$ we expand $G$ as
        \begin{equation}
        \label{eq:expansionG}
        G(x,y)=H(x,y)+ \mathcal{O}^\#\left((x,y)^{-5}+\delta (x,y)^{-4} \right),\quad H(x,y)\defeq \frac{1}{x^3y}+\frac{1}{x^2 y^2}+\frac{1}{xy^3}+\frac{\delta}{xy^2}+\frac{\delta}{x^2y}.
        \end{equation}
\end{subequations}

\begin{figure}
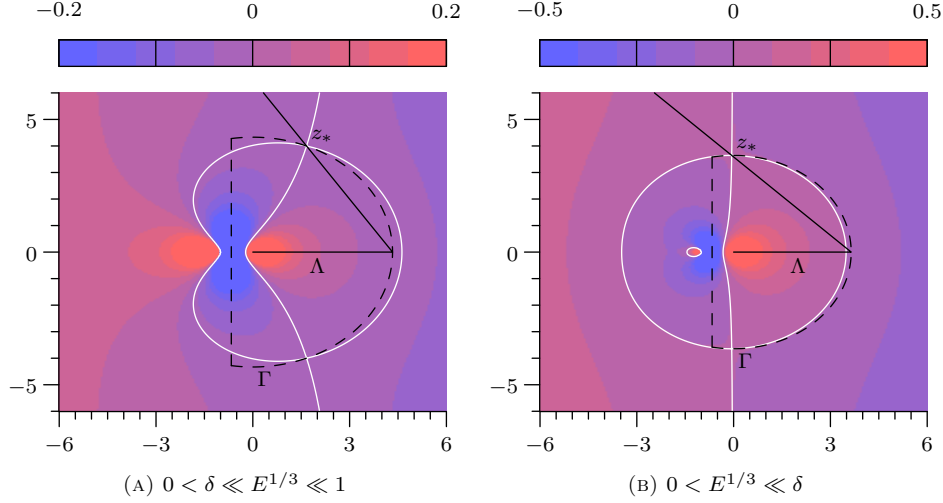

    \centering
    \begin{subfigure}[b]{.5\linewidth}
        \centering
        \begin{asy}
            size(6cm,6cm,IgnoreAspect);
    pair xyMin=(-6.01,-6.01);
    pair xyMax=(6,6);
    var delta = .0000001;
    var ee = .01;
    pair xast = (1.66553 , 3.99801 );
    var ra = sqrt(xast.x^2+xast.y^2);
    real ff(pair z) {return xpart( log(1+z)-log(z) -(1-delta)/(1+z) - ee*z ) ;}
    var c = 4;
    
    real f(real x, real y) {return ff((x,y));}
    var ffS = ff(xast);
    label("$z_\ast$",xast,align=NE);
    picture bar;
    bounds range=image(f,Range(-.2,.2),xyMin,xyMax,N,Palette);
    palette(range,(xyMin.x,xyMax.y+1),(xyMax.x,xyMax.y+2),Top,Palette,PaletteTicks(N=2,n=2));
    defaultpen(.5bp);
    draw(contour(ff,xyMin,xyMax,new real[] {ffS},N,operator ..),white);    
    draw((0,0)--(ra,0)--(ra+(xast.x-ra)*xyMax.y/xast.y,xyMax.y));
    pair gam(real t) { return (ra*cos(t), ra*sin(t)); }
    write(acos(-2/3/ra));
    path gamg = graph(gam, -acos(-2/3/ra), acos(-2/3/ra));
    draw(gamg,dashed);
    //draw((-.7,0)..(-.5,2)..xast..(5.8,0)..(xast.x,-xast.y)..(-.5,-2)..(-.7,0),dotted);
    draw((-2/3,-sqrt(ra^2-4/9))--(-2/3,sqrt(ra^2-4/9)),dashed);
    label("$\Gamma$",(0,-ra),align=SE);
    label("$\Lambda$",(2,0),align=S);
    xaxis(Bottom,xyMin.x,xyMax.x,RightTicks(Step=3,step=0.5),above=true);
    yaxis(Left,xyMin.y,xyMax.y,LeftTicks(Step=5,step=1),above=true);
        \end{asy}   
        \caption{\(0<\delta\ll E^{1/3}\ll 1\)} 
    \end{subfigure}%
    \begin{subfigure}[b]{.5\linewidth}
        \centering
        \begin{asy}
            size(6cm,6cm,IgnoreAspect);
    pair xyMin=(-6.01,-6.01);
    pair xyMax=(6,6);
    var delta = .5;
        var ee = .04;
         pair xast = (-0.0586703, 3.64358 );
        var ra = sqrt(xast.x^2+xast.y^2);
    real ff(pair z) {return xpart( log(1+z)-log(z) -(1-delta)/(1+z) - ee*z ) ;}
    var c = 4;
    
    real f(real x, real y) {return ff((x,y));}
    var ffS = ff(xast);
    //fill(circle(xast,2.));
    label("$z_\ast$",xast,align=NE);
    picture bar;
    bounds range=image(f,Range(-.5,.5),xyMin,xyMax,N,Palette);
    palette(range,(xyMin.x,xyMax.y+1),(xyMax.x,xyMax.y+2),Top,Palette,PaletteTicks(N=2,n=2));
    defaultpen(.5bp);
    draw(contour(ff,xyMin,xyMax,new real[] {ffS},N,operator ..),white);
    //draw((0,0)--(c,0)--(c+(xast.x-c)*xyMax.y/xast.y,xyMax.y));
    draw((0,0)--(ra,0)--(ra+(xast.x-ra)*xyMax.y/xast.y,xyMax.y));
    pair gam(real t) { return (ra*cos(t), ra*sin(t)); }
    path gamg = graph(gam, -acos(-2/3/ra), acos(-2/3/ra));
    draw(gamg,dashed);
    //draw((-.7,0)..(-.5,2)..xast..(5.8,0)..(xast.x,-xast.y)..(-.5,-2)..(-.7,0),dotted);
    draw((-2/3,-sqrt(ra^2-4/9))--(-2/3,sqrt(ra^2-4/9)),dashed);
    label("$\Gamma$",(0,-ra),align=SE);
    label("$\Lambda$",(2,0),align=S);
    xaxis(Bottom,xyMin.x,xyMax.x,RightTicks(Step=3,step=0.5),above=true);
    yaxis(Left,xyMin.y,xyMax.y,LeftTicks(Step=5,step=1),above=true);
        \end{asy}   
        \caption{\(0< E^{1/3}\ll \delta\)} 
    \end{subfigure}
    \caption{Illustration of the contours~\eqref{eq:defycon}--\eqref{eq:defxcon} together with the phase diagram of \(\Re f\), where the white line represents the level set \(\Re f(x)=\Re f(z_\ast)\). Note that the precise choice of the contours is only important close to \(0\) and for very large \(|x|\) as otherwise the phase function is small.}\label{fig:cont_circle}
\end{figure}

In order to compute the integral in~\eqref{H-z cplx superbos} we deform the contours $\Lambda$ and $\Gamma$ through $z_*$, with $z_*$ defined in~\eqref{eq:impsadd}. In particular, we are allowed to deform the contours as long as the $x$-contour goes out from zero in region $\Re[x]>|\Im[x]|$, it ends in the region $\Re[x]<0,\Im[x]>0$, and it does not cross $0$ and $-1$ along the deformation; the $y$-contour, instead, can be freely deformed as long as it does not cross $0$ and $-1$. Hence, we can deform the $y$-contour as $\Gamma=\Gamma_{z_*}\defeq \Gamma_{1,z_*}\cup\Gamma_{2,z_*}$, where
\begin{subequations}
    \begin{equation}
        \label{eq:defycon}
        \Gamma_{1,z_*}\defeq\left\{ -\frac{2}{3}+\ii t: 0\le |t|\le \sqrt{|z_*|^2-\frac{4}{9}}\right\}, \quad \Gamma_{2,z_*}\defeq \left\{|z^*|e^{\ii\psi}: \, \psi \in \left[ -\psi_{z_*},\psi_{z_*}\right]\right\},
        \end{equation}
        with $\psi_{z_*}=\arccos [-2/(3|z_*|)]$, and the $x$-contour as $\Lambda=\Lambda_{z_*}\defeq \Lambda_{1,z_*}\cup \Lambda_{2,z_*}$, with
        \begin{equation}
        \label{eq:defxcon}
        \Lambda_{1,z_*} \defeq [0, |z_*|), \quad \Lambda_{2,z_*}\defeq \left\{|z_*|-qs+\ii s: \, s\in [0,+\infty)\right\},
        \end{equation}
        where
        \begin{equation}
        \label{eq:defq}
        q=q_{z_*}\defeq\Im [z_*]^{-1}(|z_*|-\Re[z_*]).
        \end{equation}
\end{subequations}
Note that $q\sim 1$ uniformly in $N$, $E$ and $\delta$, since $\Re[z_*]\lesssim \Im[z_*]$ for any $E\ll1$, $0\le \delta\le 1$. We assume the convention that the orientation of $\Gamma$ is counter clockwise. See Figure~\ref{fig:cont_circle} for an illustration of \(\Gamma\) and \(\Lambda\).

Before proceeding with the computation of the leading term of~\eqref{H-z cplx superbos}, in the following lemma we state some properties of the function $f$ on the contours $\Gamma$, $\Lambda$. Using that $\epsilon\ll E$,  the proof of the lemma below follows by easy computations.

\begin{lemma}
\label{lem:propf}
Let $f$ be the phase function defined in~\eqref{H-z cplx superbos}, then the following properties hold true:
\begin{enumerate}[(i)]

\item \label{it:reimf} For any $y=-2/3+\ii t\in \Gamma_{1,z_*}$, we have that
\begin{equation}
\label{eq:ref}
\Re[f(-2/3+\ii t)]=\frac{2E}{3}+\epsilon t-\frac{1}{2t^2}+\mathcal{O}(|t|^{-3}+\delta|t|^{-2}),
\end{equation}
and
\begin{equation}
\label{eq:imf}
\Im[f(-2/3+\ii t)]=\frac{2\epsilon}{3}-Et-\frac{\delta}{t}+\mathcal{O}(|t|^{-3}).
\end{equation}

\item \label{it:incf} For $\epsilon=0$, the function $t\mapsto \Re[f(-2/3+\ii t)]$ on $\Gamma_{1,z_*}$ is strictly increasing if $t>0$ and strictly decreasing if $t<0$.

\item \label{it:dec} The function $x\mapsto \Re[f(x)]$ is strictly decreasing on $\Lambda_{1,z_*}$.

\item \label{it:par2} Let $x\in \Lambda_{2,z_*}$ be parametrized as $x=|z_*|-qs+\ii s$, for $s\in [0, +\infty)$, with $q$ defined in~\eqref{eq:defq}, then
\begin{equation}
\label{eq:realimpexp}
\begin{split}
\Re[f(x)]&=-E(|z_*|-qs)+\epsilon s+\frac{\delta (|z_*|-qs)}{s^2+(qs-|z_*|)^2}\\
&\quad+\frac{(1-2\delta)[(|z_*|-qs)^2-s^2]}{(s^2+(qs-|z_*|)^2)^2}+\mathcal{O}\left([s^2+|z_*|^2]^{-3/2}\right).
\end{split}\end{equation}
\end{enumerate}
\end{lemma}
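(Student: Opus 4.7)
The plan is to verify all four assertions by substituting the explicit parametrizations of the contours into the Laurent expansion~\eqref{eq:expansionf}, i.e.\ $f(x)=g(x)+\mathcal{O}^\#(x^{-3}+\delta x^{-2})$ with $g(x)=-(E+\ii\epsilon)x+\delta/x+1/(2x^2)$, and then separating real and imaginary parts. Nothing conceptual happens; the entire proof is computational.

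For part (i), I would substitute $y=-2/3+\ii t$ into $g(y)$ and use $|y|^2=4/9+t^2$ to obtain the large-$|t|$ asymptotics $\Re[1/y]=\mathcal{O}(|t|^{-2})$, $\Im[1/y]=-1/t+\mathcal{O}(|t|^{-3})$, $\Re[1/y^2]=-1/t^2+\mathcal{O}(|t|^{-4})$ and $\Im[1/y^2]=\mathcal{O}(|t|^{-3})$. Combining these with $\Re[-wy]=2E/3+\epsilon t$ and $\Im[-wy]=2\epsilon/3-Et$, and absorbing the $f-g$ remainder together with the higher-order Taylor corrections into $\mathcal{O}(|t|^{-3}+\delta|t|^{-2})$, yields~\eqref{eq:ref} and~\eqref{eq:imf}.

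For part (ii) I would set $h(t)\defeq\Re[f(-2/3+\ii t)]\rvert_{\epsilon=0}$ so that $h'(t)=-\Im[f'(-2/3+\ii t)]$. Starting from $f'(x)=-[x(1+x)]^{-1}+(1-\delta)/(1+x)^2-E$ and plugging in $x(1+x)=-(2/9+t^2)-\ii t/3$ and $(1+x)^2=1/9-t^2+2\ii t/3$ for $x=-2/3+\ii t$, one finds that both imaginary contributions to $f'$ are negative multiples of $t$; since $1-\delta\ge 0$, summing them with positive weights preserves the sign, so $h'(t)\cdot t>0$ for $t\neq 0$. Part (iii) is even simpler: along $\Lambda_{1,z_*}\subset(0,\infty)$ a direct derivative gives
\[\frac{d}{dx}\Re[f(x)]=-\frac{1+\delta x}{x(1+x)^2}-E,\]
which is strictly negative for any $x>0$, $\delta\ge 0$, $E>0$.

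For part (iv) I would substitute $x=|z_*|-qs+\ii s$ directly into $f$, using $|x|^2=(|z_*|-qs)^2+s^2$, $\Re[1/x]=(|z_*|-qs)/|x|^2$ and $\Re[1/x^2]=[(|z_*|-qs)^2-s^2]/|x|^4$, together with the Laurent expansion of $\log((1+x)/x)-(1-\delta)/(1+x)$ to order two. The summand $-E(|z_*|-qs)+\epsilon s$ comes from $-\Re[wx]$, the $\delta$-summand from $\delta\Re[1/x]$, and the remaining explicit summand from the coefficient of $\Re[1/x^2]$. All subleading contributions are controlled by $|x|^{-3}\lesssim(s^2+|z_*|^2)^{-3/2}$, since $q\sim 1$ uniformly in the parameters forces $|x|^2\gtrsim s^2+|z_*|^2$ everywhere on $\Lambda_{2,z_*}$. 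The only modest subtlety I foresee is the sign tracking in part (ii), where the positivity $1-\delta\ge 0$ is essential to prevent cancellation between the two imaginary contributions; apart from this, the proof is pure bookkeeping and presents no genuine obstacle.
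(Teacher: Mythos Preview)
Your proposal is correct and matches the paper's approach exactly: the paper provides no detailed proof, stating only that ``using that $\epsilon\ll E$, the proof of the lemma below follows by easy computations,'' and your substitution-and-expand verification is precisely that computation. Your handling of part~(ii) via $h'(t)=-\Im f'(-2/3+\ii t)$ and the sign analysis exploiting $1-\delta\ge 0$ is the right mechanism, and the bound $|x|^2\gtrsim s^2+|z_*|^2$ in part~(iv) is indeed just the observation that $(1-qt)^2+t^2\gtrsim 1+t^2$ uniformly once $q\sim 1$.
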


Despite the fact that saddle point analysis is not useful anymore in this regime, we expect that the main contribution to~\eqref{H-z cplx superbos} comes from the regime in the double integral when both $x$ and $y$ are large, i.e.~$|x|, |y|\ge N^\rho$. For this purpose we define
\begin{equation}
\label{eq:smallcont}
\widetilde{\Lambda}\defeq\{ x\in \Lambda_{1,z_*} : |x|\le N^\rho\}, \quad \widetilde{\Gamma}\defeq \{ y\in \Gamma_{1,z_*}: |y|\le N^\rho\}.
\end{equation}
In the following part of this section we will firstly prove that the contribution to~\eqref{H-z cplx superbos} in the regime when either $x\in \widetilde{\Lambda}$ or $y\in \widetilde{\Gamma}$ is exponentially small and then we explicitly compute 
the leading term of~\eqref{H-z cplx superbos} in the regime $(x,y)\in (\Lambda\setminus\widetilde{\Lambda})\times(\Gamma\setminus\widetilde{\Gamma})$. For this purpose, we first prove a bound for the double integral in the regime $y\in \Gamma\setminus\widetilde{\Gamma}$ or $x\in \Lambda\setminus\widetilde{\Lambda}$ in Lemma~\ref{lem:apriorbl} and Lemma~\ref{lem:apriorx}, respectively, and then we conclude the estimate for $x\in \widetilde{\Lambda}$ or $y\in \widetilde{\Gamma}$ in Lemma~\ref{lem:expsmall}. Finally, in Theorem~\ref{thm cplx} we consider the regime $(x,y)\in (\Lambda\setminus\widetilde{\Lambda})\times(\Gamma\setminus\widetilde{\Gamma})$ and compute the leading term of~\eqref{H-z cplx superbos}.

\begin{lemma}
\label{lem:apriorbl}
Let $c(N)$ be defined in~\eqref{eq:thres}, $E\lesssim c(N)$, $b\in \N  $, and let  $f$ be defined in~\eqref{H-z cplx superbos}, then
\begin{equation}
\label{eq:easbony}
\left|\int_{\Gamma\setminus\widetilde{\Gamma}} \frac{e^{Nf(y)}}{y^b}\, \diff y \right| \lesssim |z_*|^{1-b}+
\begin{cases}
\abs{z_*}, &  b=0, \\
1+\abs{\log ( N|z_*|^{-2})},  & b=1,  \, \delta<|z_*|^{-1}, \\
 1+\abs{\log ( N\delta |z_*|^{-1})},  &   b=1, \, \delta\ge |z_*|^{-1}, \\
N^\frac{1-b}{2}\wedge (N\delta)^{1-b},& b\ge 2,
 \end{cases}
\end{equation}
where $\Gamma$, $\widetilde{\Gamma}$ are defined in~\eqref{eq:defycon} and~\eqref{eq:smallcont} respectively. Furthermore, we have that
\begin{equation}
\label{eq:exactyint}
\int_{\Gamma\setminus\widetilde{\Gamma}}e^{Nf(y)}\, \diff y=\mathcal{O}(N^{1/2}\vee (N\delta)), \quad \int_{\Gamma\setminus\widetilde{\Gamma}}\frac{e^{Nf(y)}}{y}\, \diff y=\mathcal{O}(1).
\end{equation}
\begin{proof}
Firstly, we notice that if $y\in \Gamma\setminus\widetilde{\Gamma}$ then $|y|\ge N^\rho$, hence we expand $f$ as in~\eqref{eq:expansionf}, i.e.
\begin{equation}
\label{eq:recexf}
f(y)=-(E+\ii \epsilon) y+\frac{\delta}{y}+\frac{1}{2y^2}+\mathcal{O}(|y|^{-3}+\delta|y|^{-2}).
\end{equation}
Moreover, by~\eqref{eq:impsadd} it follows that $|z_*|\sim E^{-1/3}\vee \sqrt{\delta E^{-1}}$, and so that
\begin{equation}
\label{eq:usbound}
|Nf(y)|\lesssim NE^{2/3}+N\sqrt{\delta E} 
\end{equation}
for any $|y|\sim |z_*|$, $E\lesssim c(N)$. Note that $\Gamma\setminus\widetilde{\Gamma}=(\Gamma_{1,z_*}\setminus \widetilde{\Gamma})\cup \Gamma_{2,z_*}$, with $\Gamma_{1,z_*}$, $\Gamma_{2,z_*}$ defined in~\eqref{eq:defycon}. By~\eqref{eq:usbound} it easily follows that $|Nf(y)|\lesssim 1$ for any $y\in \Gamma_{2,z_*}$, which clearly implies that
\begin{equation}
\label{eq:estcircinty}
\int_{\Gamma_{2,z_*}}\left|\frac{e^{Nf(y)}}{y^b}\right||\diff y|\lesssim |z_*|^{1-b}.
\end{equation}
To conclude the proof of~\eqref{eq:easbony} we bound the integral on $\Gamma_{1,z_*}\setminus\widetilde{\Gamma}$. Let $w=E+\ii \epsilon$, then in this regime, by~\eqref{eq:ref}--\eqref{eq:imf}, we have 
\begin{equation}
\label{eq:bettcomp}
\begin{split}
    \int_{\Gamma_{1,z_*}\setminus\widetilde{\Gamma}}\frac{e^{Nf(y)}}{y^b}\diff y=-\ii \int_{N^\rho}^{\sqrt{|z_*|^2-4/9}} &e^{-N\left[\frac{1}{2t^2}+\mathcal{O}(|t|^{-3}+\delta |t|^{-2})\right]}\\
    &\times\left(\frac{e^{-N\left[\ii wt+\ii \frac{\delta}{t}\right]}}{(-2/3+\ii t)^b}+\frac{e^{N\left[\ii w t+\ii \frac{\delta}{t}\right]}}{(-2/3-\ii t)^b} \right) (1+\mathcal{O}(NE)) \diff t.    
\end{split}
\end{equation}
For any $b\in \N  $, we estimate the integral above as follows
\begin{equation}
\label{eq:firstbbo}
\left| \int_{\Gamma_{1,z_*}\setminus\widetilde{\Gamma}}\frac{e^{Nf(y)}}{y^b}\,\diff y\right|  \lesssim  \left|  \int_{N^\rho}^{|z_*|} \frac{e^{-\frac{N}{2t^2}+\frac{\ii\delta N}{t}}}{t^b }\, \diff t\right| \lesssim \begin{cases}
\abs{z_*}, &  b=0, \\
1+\abs{\log ( N|z_*|^{-2})},  & b=1, \, \delta<|z_*|^{-1}, \\
1+\abs{\log ( N\delta |z_*|^{-1})},  &  b=1, \, \delta\ge |z_*|^{-1}, \\
N^\frac{1-b}{2}\wedge (N\delta)^{1-b},& b\ge 2,
 \end{cases}
\end{equation}
Note that in \eqref{eq:firstbbo} for $b=0,1$ we get the bound in the r.h.s. bringing the absolute value inside the integral, whilst this is not affordable to get the bound for $b\ge 2$, since the term $e^{\ii\delta N/t}$ has to be used. Indeed, we would get a bound $N^{(1-b)/2}$ for $b\ge 2$ if we estimate the integral in \eqref{eq:firstbbo} moving the absolute value inside. In the following part of the proof we compute the integral~\eqref{eq:bettcomp} for $b= 0, 1$ without estimating it by absolute value.

In particular, for $b=1$, we prove that the leading term of the r.h.s.~of~\eqref{eq:bettcomp} is $\landauO{1}$, instead of 
the overestimate $1+|\log (N|z_*|^{-2})|$ in~\eqref{eq:firstbbo}, as a consequence of the symmetry of $\Gamma_{1,z_*}$ respect to $0$. For this computation we have to distinguish the cases $\delta \gg N^{-1/2}$ and $\delta \lesssim N^{-1/2}$. If $E\sim c(N)$ and $\delta\lesssim N^{-1/2}$, then $N|z_*|^{-2}\sim 1$, hence the bound in~\eqref{eq:exactyint} directly follows  by~\eqref{eq:estcircinty} and~\eqref{eq:firstbbo}. We are left with the cases $E\ll c(N)$ and $E\sim c(N)$, $\delta\gg N^{-1/2}$. For $\delta\gg N^{-1/2}$, we have $|z_*|\sim \sqrt{\delta/E}$ and using  $|N w t| \le NE|z_*|\ll 1$, if $E\ll c(N)$, and $|Nwt|\sim 1$, if $E\sim c(N)$, we conclude
\[
\int_{N^\rho}^{\sqrt{|z_*|^2-4/9}} \frac{e^{-N\left[\frac{1}{2t^2}\pm\ii\frac{\delta}{t}\pm \ii t w\right]}}{-2/3+\ii t}\, \diff t =\int_{N^\rho}^{|z_*|} \frac{e^{-\frac{N}{2t^2}\pm \ii \frac{\delta}{t}}}{t}\, \diff t +\mathcal{O}(1) = |\log (N\delta|z_*|^{-1})|+\mathcal{O}(1). 
\]
Similarly we prove that the integral in the l.h.s.\ of the above equalities is equal to $|\log(N|z_*|^{-2} )|+\mathcal{O}(1)$ if $\delta\ll N^{-1/2}$. Similar calculation holds if the denominator is $(-2/3 -\ii t)$ instead of $(-2/3+\ii t)$, just an overall sign changes. Thus the leading terms from the two parts of the integral in~\eqref{eq:bettcomp} cancel each other. We thus conclude the second bound in~\eqref{eq:exactyint} combining the above computations with~\eqref{eq:estcircinty} and~\eqref{eq:bettcomp}.

Next, we compute the integral of $e^{Nf(y)}$ on $\Gamma\setminus\widetilde{\Gamma}$, i.e.~we prove the first bound in~\eqref{eq:exactyint}. We consider only the regime $E\ll c(N)$, since in the regime $E\sim c(N)$ the bound in~\eqref{eq:exactyint} follows directly by~\eqref{eq:estcircinty},\eqref{eq:firstbbo}, and the definition of $c(N)$ in~\eqref{eq:thres}, since $|z_*|\sim N^{1/2}\vee N\delta$. On $\Gamma_{2,z_*}$, using the parametrization $y=|z_*|e^{\ii \psi}$, and that by~\eqref{eq:usbound} we have $|N f(y)|\ll 1$ for $E\ll c(N)$, we Taylor expand $e^{Nf(y)}$ and conclude that
\begin{equation}
\label{eq:excomcircint}
\int_{\Gamma_{2,z_*}} e^{Nf(y)}\, \diff y =2|z_*|\ii\cdot \Big[1+\mathcal{O}\left(NE|z_*|+\delta N |_*|^{-1}+N|z_*|^{-2}\right)\Big].
\end{equation}
Furthermore, by~\eqref{eq:bettcomp} for $b=0$, using that $E\ll c(N)$ and so that $|Nwt|\ll 1$ on $\Gamma_{1,z_*}$, we have
\[
\int_{\Gamma_{1,z_*}\setminus\widetilde{\Gamma}} e^{Nf(y)}\,\diff y=- 2\ii |z_*| +\mathcal{O}(N^{1/2}\vee N\delta).
\]
The minus sign is due to the counter clockwise orientation of $\Gamma$, i.e.~the vertical line  $\Gamma_{1, z_*}$ is parametrized from the top to the bottom. Combining this computation with~\eqref{eq:excomcircint} and using that $NE|z_*|^2+ N\delta +N|z_*|^{-1}\ll N^{1/2}+N\delta$, since $|z_*|\sim E^{-1/3}+\sqrt{\delta E^{-1}}$ by~\eqref{eq:impsadd}, we conclude the proof of this lemma.
\end{proof}
\end{lemma}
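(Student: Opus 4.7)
My plan is to split the integration domain as
\[ \Gamma\setminus\widetilde{\Gamma}=\Gamma_{2,z_*}\cup\bigl(\Gamma_{1,z_*}\setminus\widetilde{\Gamma}\bigr), \]
and treat the two pieces separately. On the circular arc $\Gamma_{2,z_*}$ we have $|y|=|z_*|$, and a direct bound from the expansion~\eqref{eq:expansionf} together with $|z_*|\sim E^{-1/3}\vee\sqrt{\delta/E}$ and $E\lesssim c(N)$ yields $|Nf(y)|\lesssim1$. Consequently the exponential is $O(1)$ and estimation by absolute value gives a contribution of order $|z_*|^{1-b}$, which produces the first term in the right-hand side of~\eqref{eq:easbony}. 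So the real work is on the vertical segment.

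On $\Gamma_{1,z_*}\setminus\widetilde{\Gamma}$, I would parametrize $y=-\tfrac{2}{3}+\ii t$ with $N^\rho\le|t|\le\sqrt{|z_*|^2-\tfrac{4}{9}}$ and use the expansions in Lemma~\ref{lem:propf}\eqref{it:reimf}: $\Re f\sim -\tfrac{1}{2t^2}$ plus $\epsilon t$ and error terms, $\Im f\sim -Et-\delta/t$. For $b=0$ the integrand is dominated by $e^{-N/(2t^2)}$ times a bounded factor, and integrating gives $O(|z_*|)$. For $b=1$, the naive absolute value bound gives the stated logarithmic term because $\int_{N^\rho}^{|z_*|}t^{-1}e^{-N/(2t^2)}\diff t$ is logarithmic. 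For $b\ge2$ the naive bound would read $N^{(1-b)/2}$, and to reach $N^{(1-b)/2}\wedge(N\delta)^{1-b}$ one must not discard the oscillatory factor $e^{\pm\ii N\delta/t}$: repeated integration by parts against this phase (valid in the regime $N\delta/t\gtrsim1$) improves the bound by powers of $(N\delta)^{-1}$ per derivative, giving the $(N\delta)^{1-b}$ alternative.

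The improved bound~\eqref{eq:exactyint} is the subtlest point and will likely be the main obstacle. Here one cannot just pass to absolute values since that produces the logarithm. I would combine the two halves $t>0$ and $t<0$ of $\Gamma_{1,z_*}$ using the reflection symmetry: the factors $(-2/3\pm\ii t)^{-1}$ differ only by conjugation, and the real part of $f$ is essentially even in $t$, while the dominant odd-in-$t$ piece of $\Im f$ is $-Et-\delta/t$. In the regime $E\ll c(N)$ one has $|Nwt|\ll1$ on the contour, so we may Taylor expand $e^{-\ii Nwt}$ to extract a small error, reducing the computation to $\int_{N^\rho}^{|z_*|}t^{-1}e^{-N/(2t^2)}e^{\pm\ii N\delta/t}\diff t$; the two symmetric copies then add to give a purely logarithmic quantity $\log(N|z_*|^{-2})$ or $\log(N\delta|z_*|^{-1})$ which, crucially, cancels with the corresponding contribution from the other half by the sign flip in the denominator, leaving only $O(1)$. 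The same symmetry argument, combined with the Taylor expansion $e^{Nf(y)}=1+O(Nf)$ on the arc $\Gamma_{2,z_*}$ (where $|Nf|\ll1$ when $E\ll c(N)$), gives a main term $\pm2\ii|z_*|$ from each of the two pieces; these main terms cancel because $\Gamma_{1,z_*}$ is traversed downward (by counter-clockwise orientation of $\Gamma$) while $\Gamma_{2,z_*}$ is traversed upward on the right, leaving the subleading $O(N^{1/2}\vee N\delta)$ error.

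The regime $E\sim c(N)$ needs to be handled separately but is simpler: in this regime $|z_*|\sim N^{1/2}\vee N\delta$, so the first estimate~\eqref{eq:easbony} with $b=0$ already yields the bound stated in~\eqref{eq:exactyint} without any cancellation, and the case $\delta\lesssim N^{-1/2}$ similarly follows since $N|z_*|^{-2}\sim1$ makes the logarithm trivially $O(1)$. Hence the full statement is obtained by combining the arc estimate, the vertical line bound with careful use of the oscillatory phase $e^{\ii N\delta/t}$, and the symmetry-based cancellation for $b=1$ and for the refined formula~\eqref{eq:exactyint}.
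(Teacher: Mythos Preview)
Your proposal is correct and follows essentially the same route as the paper: the same decomposition into the arc $\Gamma_{2,z_*}$ (where $|Nf|\lesssim1$ gives the $|z_*|^{1-b}$ term) and the vertical segment $\Gamma_{1,z_*}\setminus\widetilde\Gamma$; the same absolute-value estimate for $b=0,1$; the same use of the oscillatory factor $e^{\ii N\delta/t}$ for $b\ge2$ (the paper does not spell out the mechanism, so your ``repeated integration by parts'' is a legitimate realization of their remark that this factor ``has to be used''); and the same symmetry cancellation between the $t>0$ and $t<0$ halves for the improved $b=1$ bound, together with the cancellation of the $\pm2\ii|z_*|$ main terms between arc and vertical line for the improved $b=0$ bound, with the regime $E\sim c(N)$ disposed of directly since $|z_*|\sim N^{1/2}\vee N\delta$ there.
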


\begin{lemma}\label{lem:apriorx}
Let $c(N)$ be defined in~\eqref{eq:thres}, $E\lesssim c(N)$, let $f$ be defined in~\eqref{H-z cplx superbos} and $a\in \R $, then the following bound holds true
\begin{equation}
\label{eq:boundexa}
\int_{\Lambda\setminus\widetilde{\Lambda}} \left|\frac{e^{-Nf(x)}}{x^a}\right|\, |\diff x| \lesssim
\begin{cases}
|z_*|^{1-a} +(NE)^{a-1},&  a<1, \\
1+|\log ( N|z_*|^{-2})|  ,&  a=1, \delta < |z_*|^{-1}, \\
1+|\log ( N\delta|z_*|^{-1})|  ,&  a=1, \delta \ge |z_*|^{-1}, \\
N^\frac{1-a}{2}\wedge (N\delta)^{1-a} ,&  a>1,
 \end{cases}
\end{equation}
where $\Lambda$, $\widetilde{\Lambda}$ are defined in~\eqref{eq:defxcon} and~\eqref{eq:smallcont}.
\begin{proof}
We split the computation of the integral of $e^{N f(x)}x^{-a}$ as the sum of the integral over $\Lambda_{1,z_*}\setminus \widetilde{\Lambda}$ and $\Lambda_{2,z_*}$. Using the parametrization $x=|z_*|-qs+\ii s$, with $s\in [0,+\infty)$ and $q$ defined in~\eqref{eq:defq}, by~\eqref{eq:realimpexp}, we estimate the integral over $\Lambda_{2,z_*}$ as follows
\begin{equation}
\label{eq:faraweq}
\int_{\Lambda_{2,z_*}} \left|\frac{e^{-Nf(x)}}{x^a}\right| \, |\diff x|\lesssim \int_0^{+\infty }\frac{e^{-N\left[ -E(|z_*|-qs)+\frac{\delta(|z_*|-qs)}{(|z_*|-qs)^2+s^2}+\frac{(|z_*|-qs)^2-s^2}{2[(|z_*|-qs)^2+s^2]^2}\right]}}{[(|z_*|-qs)^2+s^2]^{a/2}}\, \diff s.
\end{equation}
We split the computation of the integral in the r.h.s.~of~\eqref{eq:faraweq} into two parts: $|s|\in [0, |z_*|)$ and $s\in [|z_*|,+\infty)$. Since $q\sim 1$ and $NE|z_*|\lesssim 1$, in the regime $|s|\in [0, |z_*|)$ we estimate the integral in the r.h.s.~of~\eqref{eq:faraweq} as
\begin{equation}
\label{eq:uslbou}
e^{NE|z_*|}\int_0^{|z_*|} \frac{e^{-\frac{N\delta}{|z_*|}-\frac{N}{|z_*|^2}}}{|z_*|^a}\, \diff s\lesssim |z_*|^{1-a}.
\end{equation}
In the regime $s\in [|z_*|,+\infty)$, instead, we have
\begin{equation}
\label{eq:farb}
e^{NE|z_*|}\int_{|z_*|}^{+\infty} \frac{e^{-NE qs}}{s^a}\diff s\lesssim 
\begin{cases}
(NE)^{a-1} ,&  a<1, \\
1+|\log (NE|z_*|)| ,&  a=1, \\
|z_*|^{1-a} ,&  a>1.
\end{cases}
\end{equation}

We are left with the estimate of the integral over $\Lambda_{1,z_*}\setminus \widetilde{\Lambda}$. Similarly to the bound in~\eqref{eq:firstbbo}, using that $\Lambda_{1,z_*}\setminus \widetilde{\Lambda}=[N^\rho, |z_*|)$ and that $NE|z_*|\lesssim 1$, we have that
\begin{equation}
\label{eq:mainb}
\int_{\Lambda_{1,z_*}\setminus\widetilde{\Lambda}} \left|\frac{e^{-Nf(x)}}{x^a}\right| \, |\diff x|\lesssim \int_{N^\rho}^{|z_*|} \frac{e^{-\frac{N\delta}{s}-\frac{N}{2s^2}}}{s^a}\diff s\lesssim
\begin{cases}
|z_*|^{1-a} ,&  a<1, \\
1+|\log ( N|z_*|^{-2})|  ,&  a=1, \delta < |z_*|^{-1}, \\
1+|\log ( N\delta|z_*|^{-1})|  ,&  a=1, \delta \ge |z_*|^{-1}, \\
N^\frac{1-a}{2}\wedge (N\delta)^{1-a} ,&  a>1,
 \end{cases}
\end{equation}
Combining~\eqref{eq:faraweq}--\eqref{eq:mainb} we conclude the proof of~\eqref{eq:boundexa}.
\end{proof}
\end{lemma}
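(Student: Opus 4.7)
The plan is to bound the integrand pointwise by absolute value and split $\Lambda\setminus\widetilde{\Lambda}$ into the two natural pieces $\Lambda_{1,z_*}\setminus\widetilde{\Lambda}=[N^\rho,|z_*|)$ and $\Lambda_{2,z_*}$. Unlike in Lemma~\ref{lem:apriorbl}, there is no symmetry in the integrand to exploit here, so the task is purely a Laplace-type estimate; the heart of the argument is to identify which factor of the exponential $e^{-Nf(x)}$ controls the integral in each regime, using the expansion~\eqref{eq:expansionf} of $f$ on $\Lambda_{1,z_*}$ and Lemma~\ref{lem:propf}(iv) for the real part of $f$ on $\Lambda_{2,z_*}$.

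On $\Lambda_{1,z_*}\setminus\widetilde{\Lambda}=[N^\rho,|z_*|)$, since $NEs\le NE|z_*|\lesssim 1$ in the regime $E\lesssim c(N)$, the expansion~\eqref{eq:expansionf} yields the pointwise bound $|e^{-Nf(s)}|/s^a\lesssim e^{-N\delta/s-N/(2s^2)}/s^a$. For $a<1$ the integrand is maximal at the right endpoint $s=|z_*|$, contributing $\lesssim |z_*|^{1-a}$; for $a>1$ it is maximal at the effective lower cutoff $s\sim N^{1/2}\vee(N\delta)$ imposed by the Gaussian factors, giving $N^{(1-a)/2}\wedge(N\delta)^{1-a}$; and for $a=1$ the integral is borderline logarithmic, where the splitting is governed by whether $1/s^2$ or $\delta/s$ dominates at the effective cutoff. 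This produces the two subcases $\delta<|z_*|^{-1}$ (the quadratic factor controls throughout $[N^\rho,|z_*|)$, yielding $|\log(N|z_*|^{-2})|$) and $\delta\ge|z_*|^{-1}$ (the linear-in-$1/s$ factor dominates past $s\sim 1/\delta$, yielding $|\log(N\delta|z_*|^{-1})|$).

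On $\Lambda_{2,z_*}$, with the parametrization $x=|z_*|-qs+\ii s$, $s\ge 0$, and $q\sim 1$, one has $|x|\sim|z_*|$ for $s\in[0,|z_*|)$ and $|x|\sim s$ for $s\ge|z_*|$. In the near regime, Lemma~\ref{lem:propf}(iv) combined with the estimates $NE|z_*|\lesssim 1$, $N\delta/|z_*|\lesssim 1$, $N/|z_*|^2\lesssim 1$ --- all direct consequences of $E\lesssim c(N)$ and $|z_*|\sim E^{-1/3}\vee\sqrt{\delta/E}$ --- shows the exponent is $O(1)$, so the contribution is $\lesssim|z_*|^{1-a}$. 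In the far regime $s\ge|z_*|$ the dominant surviving term is the linear one $-NEqs$, and the substitution $u=NEqs$ reduces the remaining integral to $(NE)^{a-1}\int_{NE|z_*|}^{\infty}e^{-u}u^{-a}\,\diff u$; this is $(NE)^{a-1}$ for $a<1$, while for $a\ge 1$ the small-$u$ blow-up is cut off at $u\sim NE|z_*|$ and the result is already absorbed into the $|z_*|^{1-a}$ bound coming from the near regime.

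The main point requiring care is consistency: combining the contributions from the two pieces, one must verify that the stated right-hand side of~\eqref{eq:boundexa} is the correct envelope in every subregime of $\delta\in[0,1]$ and $E\in(0,c(N)]$. This amounts to comparing the four scales $|z_*|$, $N^{1/2}$, $N\delta$ and $(NE)^{-1}$ at the threshold $E\sim c(N)$ and checking that the $(NE)^{a-1}$ summand (coming exclusively from $\Lambda_{2,z_*}$) survives only for $a<1$ and that the logarithmic corrections appear only at $a=1$. No new mechanism beyond those already used in the proof of Lemma~\ref{lem:apriorbl} is needed, but the bookkeeping across the cases is the only real source of difficulty.
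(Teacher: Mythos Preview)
Your proposal is correct and follows essentially the same approach as the paper: the same decomposition into $\Lambda_{1,z_*}\setminus\widetilde\Lambda$ and $\Lambda_{2,z_*}$, the same parametrization and splitting point $s=|z_*|$ on $\Lambda_{2,z_*}$, and the same Laplace-type estimates on each piece. One small caveat: for $a=1$ the far-regime contribution from $\Lambda_{2,z_*}$ is $1+|\log(NE|z_*|)|$, which is not absorbed into the near-regime bound $|z_*|^{0}=1$ as you wrote but rather matches the logarithmic bound coming from $\Lambda_{1,z_*}$ (indeed $NE|z_*|\sim N|z_*|^{-2}$ when $\delta<|z_*|^{-1}$ and $NE|z_*|\sim N\delta|z_*|^{-1}$ otherwise, by the definition of $z_*$).
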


Using Lemma~\ref{lem:apriorbl},  Lemma~\ref{lem:apriorx} in the following lemma we prove that the contribution to~\eqref{H-z cplx superbos} in the regime where either $x\in \widetilde{\Lambda}$ or $y\in \widetilde{\Gamma}$ is exponentially small.

\begin{lemma}\label{lem:expsmall}
Let $c(N)$ be defined in~\eqref{eq:thres}, and let $f$, $G$ be defined in~\eqref{H-z cplx superbos}, then, as $\epsilon\to 0^+$, for any $E\lesssim c(N)$ we have that
\begin{equation}
\label{eq:boundsmallreg}
\begin{split}
&\left|\left(\int_\Lambda\diff x\int_\Gamma\diff y-\int_{\Lambda\setminus\widetilde{\Lambda}}\diff x\int_{\Gamma\setminus\widetilde{\Gamma}}\diff y\right)\left[ e^{N[f(y)-f(x)]} y G(x,y)\right]\right| \\
&\qquad\qquad\qquad\qquad\lesssim N^{\rho}(N^{1/2}+N\delta+ |\log (NE^{2/3})|)e^{-\frac{1}{2}N^{1-2\rho}}.
\end{split}
\end{equation}
\begin{proof}
We split the estimate of the integral over $(\Lambda\times\Gamma)\setminus[(\Lambda\setminus\widetilde{\Lambda})\times(\Gamma\setminus\widetilde{\Gamma})]$ into three regimes: $(x,y)\in \widetilde{\Lambda}\times \widetilde{\Gamma}$, $(x,y)\in \widetilde{\Lambda}\times (\Gamma\setminus\widetilde{\Gamma})$, $(x,y)\in (\Lambda\setminus\widetilde{\Lambda})\times \widetilde{\Gamma}$.
By \ref{it:incf}, \ref{it:dec} of Lemma~\ref{lem:propf}, in the regime $y\in \widetilde{\Gamma}$ and $x\in \widetilde{\Lambda}$, respectively, it follows that the function $f$ attains its maximum at $y=-2/3\pm \ii N^\rho$ on $\widetilde{\Gamma}$ and $f$ attains its minimum at $x=N^\rho$ on $\widetilde{\Lambda}$. Hence, by the expansion in~\eqref{eq:expansionf} it follows that
\begin{equation}\label{eq:expbo}
\sup_{y\in\widetilde{\Gamma}}\big|e^{Nf(y)}\big|+\sup_{x\in \widetilde{\Gamma}} \big|e^{-Nf(x)}\big|\lesssim e^{-Nf(N^\rho)},
\end{equation}
with
\begin{equation}
\label{eq:fexp}
f(N^\rho)=\frac{\delta}{N^\rho}+\frac{1}{2N^{2\rho}}+\mathcal{O}(N^{-3\rho}+\delta N^{-2\rho}).
\end{equation}
Then, by~\eqref{eq:expbo} and~\eqref{eq:fexp}, it follows that the integral over $(x,y)\in \widetilde{\Lambda}\times \widetilde{\Gamma}$ is bounded by $N^{2\rho} e^{-N^{1-2\rho}}$.
Note that in the regimes $(x,y)\in \widetilde{\Lambda}\times (\Gamma\setminus\widetilde{\Gamma})$ and $(x,y)\in (\Lambda\setminus\widetilde{\Lambda})\times \widetilde{\Gamma}$ one among $|x|$ and $|y|$ is bigger than $N^\rho$. Hence, expanding~\eqref{H-z cplx superbos} for large $x$ or $y$ argument, using Lemma~\ref{lem:apriorbl}, Lemma~\ref{lem:apriorx} to estimate the regime $x\in \widetilde{\Lambda}$ and $y\in \widetilde{\Gamma}$, respectively, by~\eqref{eq:expbo}--\eqref{eq:fexp}, we conclude that the integral over $(x,y)\in \widetilde{\Lambda}\times (\Gamma\setminus\widetilde{\Gamma})$ is bounded by $N^\rho(N^{1/2}+(N\delta)) e^{-N^{1-2\rho}/2}$, and that the one over $(x,y)\in (\Lambda\setminus\widetilde{\Lambda})\times \widetilde{\Gamma}$ is bounded by $N^\rho(1+|\log (NE^{2/3})|)e^{-N^{1-2\rho}/2}$.
\end{proof}
\end{lemma}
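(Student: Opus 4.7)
The plan is to decompose the remaining region of integration, namely
\[
(\Lambda\times\Gamma) \setminus \bigl[(\Lambda\setminus\widetilde\Lambda)\times(\Gamma\setminus\widetilde\Gamma)\bigr] = \bigl(\widetilde\Lambda\times\widetilde\Gamma\bigr) \cup \bigl(\widetilde\Lambda\times(\Gamma\setminus\widetilde\Gamma)\bigr) \cup \bigl((\Lambda\setminus\widetilde\Lambda)\times\widetilde\Gamma\bigr),
\]
and estimate each piece separately, using that in each piece at least one of the variables is confined to the ``small'' region where the phase factor provides strong exponential suppression.

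The central input is the monotonicity statements (ii)--(iii) of Lemma~\ref{lem:propf}. Since $E\lesssim c(N)$ and $\epsilon\to 0^+$, the linear term $\epsilon t$ in~\eqref{eq:ref} is negligible, so on the vertical segment $\widetilde\Gamma=\{-2/3+\ii t:|t|\le N^\rho\}$ the function $t\mapsto \Re f(-2/3+\ii t)$ is monotone away from $0$ and attains its maximum at the endpoints $|t|=N^\rho$. Similarly, on $\widetilde\Lambda=[0,N^\rho]$ the function $x\mapsto \Re f(x)$ is decreasing and attains its minimum at $x=N^\rho$. Inserting the expansion~\eqref{eq:expansionf} at the endpoint gives
\[
\sup_{y\in\widetilde\Gamma}\bigl|e^{Nf(y)}\bigr|+\sup_{x\in\widetilde\Lambda}\bigl|e^{-Nf(x)}\bigr|\;\lesssim\;e^{-Nf(N^\rho)}\;\lesssim\; e^{-\frac12 N^{1-2\rho}},
\]
which is the exponentially small factor responsible for the right-hand side of~\eqref{eq:boundsmallreg}.

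With this in hand I would proceed as follows. For the ``doubly small'' piece $\widetilde\Lambda\times\widetilde\Gamma$, the integrand $y\,G(x,y)$ is bounded on compact subsets away from the singularities of $G$, so a crude bound gives a factor of the volume $\sim N^{2\rho}$ times the exponential suppression, well within the target. For the two ``mixed'' pieces, one variable sits in the small region and supplies the factor $e^{-N^{1-2\rho}/2}$, while the other is integrated over the large region where we can Taylor-expand $y\,G(x,y)$ in the large variable as in~\eqref{eq:expansionG} and then apply the a priori bounds of Lemmas~\ref{lem:apriorbl} and~\ref{lem:apriorx}. Specifically, on $\widetilde\Lambda\times(\Gamma\setminus\widetilde\Gamma)$ the $y$-integrals of $e^{Nf(y)}y^{1-b}$ for $b\in\{0,1,2,\dots\}$ are controlled by~\eqref{eq:easbony}, producing the $N^{1/2}+N\delta$ contribution; on $(\Lambda\setminus\widetilde\Lambda)\times\widetilde\Gamma$ the $x$-integrals of $e^{-Nf(x)}x^{-a}$ are controlled by~\eqref{eq:boundexa}, producing the $1+|\log(NE^{2/3})|$ contribution (recall $|z_\ast|\sim E^{-1/3}\vee\sqrt{\delta/E}$). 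The extra factor $N^\rho$ in the final bound absorbs the $\widetilde\Lambda$ or $\widetilde\Gamma$ length times any polynomial factors in the integrand at the ``boundary''.

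The main obstacle is bookkeeping: one must verify that the Taylor expansions~\eqref{eq:expansionf}--\eqref{eq:expansionG} remain valid throughout the large-variable regions of integration $\Gamma\setminus\widetilde\Gamma$ and $\Lambda\setminus\widetilde\Lambda$, and that the remainder terms $\mathcal{O}^\#$ do not spoil the a priori bounds of Lemmas~\ref{lem:apriorbl} and~\ref{lem:apriorx} (they do not, since those lemmas were proved with exactly the same expansion). No subtle cancellations are needed here because we are allowed to lose the exponentially small factor to polynomial overhead; hence estimating everything by absolute value after factoring out the suppression $e^{-Nf(N^\rho)}$ suffices.
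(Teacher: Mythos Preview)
Your proposal is correct and follows essentially the same approach as the paper: the same three-region decomposition, the same monotonicity input from Lemma~\ref{lem:propf}(ii)--(iii) to extract the exponential factor $e^{-Nf(N^\rho)}\lesssim e^{-\frac12 N^{1-2\rho}}$, and the same use of Lemmas~\ref{lem:apriorbl} and~\ref{lem:apriorx} on the large-variable integral in the two mixed regions to produce the $N^{1/2}+N\delta$ and $1+|\log(NE^{2/3})|$ factors, respectively. The bookkeeping you flag is exactly what the paper does, and your observation that no cancellations are needed (absolute-value bounds suffice once the exponential suppression is factored out) is the key simplification here.
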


Next, we  compute  the leading term of~\eqref{H-z cplx superbos}. 
We define $\widetilde{z}_*$ as
\begin{equation}
\label{eq:newrescvar}
 \widetilde{z}_*(\lambda, \widetilde{\delta} )\defeq N^{-1/2}\big|z_*(\lambda c(N), N^{-1/2}\widetilde{\delta})\big|, \qquad \lambda = \frac{E}{c(N)}, \quad \widetilde{\delta} = N^{1/2}\delta,
\end{equation}
where we also recalled the rescaled parameters $\lambda$ and $\widetilde\delta$ from \eqref{rescale}. 
Note that $\widetilde{z}_*(\lambda,\widetilde{\delta})$ is $N$-independent, indeed all $N$ factors scale out by using the definition of $z_*(E,\delta)$ from ~\eqref{eq:impsadd}.  Since $E\ll 1$ and $\delta\in [0,1]$, by~\eqref{eq:newrescvar} it follows that the range of the new parameters is $\widetilde{\delta}\le N^{1/2}$ and $\lambda\ll c(N)^{-1}$. 

We are now ready to prove our main result on the leading term of~\eqref{H-z cplx superbos}, denoted by $q_{\widetilde{\delta}}(\lambda)$, in the complex case, Theorem~\ref{thm cplx}. Then, the one point function of $Y$ is asymptotically given by $p_{\widetilde{\delta}}(\lambda)\defeq\Im[q_{\widetilde{\delta}}(\lambda)]$. The main inputs for the proof are the bounds in~\eqref{eq:easbony} and~\eqref{eq:boundexa} that will be used to estimate the error terms in the expansions for large arguments of $f$ and $G$ in~\eqref{eq:expansionf} and~\eqref{eq:expansionG}.
\begin{proof}[Proof of Theorem~\ref{thm cplx} in the case \(\delta\ge 0\)]
By~\eqref{H-z cplx superbos} and Lemma~\ref{lem:expsmall} it follows that
\begin{equation}\label{expre}
\begin{split}
\E \Tr  [Y-w]^{-1}&=\frac{N^2}{2\pi \ii}\int_{\Lambda\setminus\widetilde{\Lambda}}\diff x\int_{\Gamma\setminus\widetilde{\Gamma}}\diff y e^{-Nf(x)+Nf(y)} y \cdot G(x,y) \\
&\quad +\mathcal{O}\left( N^{\rho}(N^{1/2}+N\delta + |\log (NE^{2/3})|)e^{-\frac{1}{2}N^{1-2\rho}}\right).
\end{split}
\end{equation}
Note that $|x|, |y|\ge N^\rho$ for any $(x,y)\in (\Lambda\setminus\widetilde{\Lambda})\times(\Gamma\setminus\widetilde{\Gamma})$. In order to prove~\eqref{eq:onepointest} we first estimate the error terms in the expansions of $f$ and $G$ in~\eqref{eq:expansionf}--\eqref{eq:expansionG} and then in order to get an $N$-independent double integral we rescale the phase function by $|z_*|$. By Lemma~\ref{lem:apriorbl} and Lemma~\ref{lem:apriorx}, using that $|\log (N|z_*|^{-2})|+|\log (N\delta |z_*|^{-1})|\lesssim |\log (NE^{2/3})|$ by the definition of $z_*$ in~\eqref{eq:impsadd}, it follows that
\begin{equation}
\label{eq:combb}
\left| \int_{\Lambda\setminus\widetilde{\Lambda}}\diff x\int_{\Gamma\setminus\widetilde{\Gamma}}\diff y \frac{e^{-Nf(x)+Nf(y)}}{x^ay^b}\right|\lesssim 
\begin{cases}
N^\frac{2-d}{2}(1\wedge \widetilde{\delta}^{1-d}) (1\vee \widetilde{\delta}) ,&  b=0, \\
N^\frac{1-b}{2} (1\wedge \widetilde{\delta}^{1-b})(1+|\log (NE^{2/3})|) ,&  a=1,\, b\ge 1, \\
N^\frac{2-d}{2}(1\wedge \widetilde{\delta}^{2-d}) ,&  a>1, \, b\ge 1,
\end{cases}
\end{equation}
for any $a\ge 1$, $b\in \N  $, where $d\defeq a+b$. In order to get the bound in the r.h.s.~of~\eqref{eq:combb} we estimated the terms with $b=0$ and $b=1$ using the improved bound in~\eqref{eq:exactyint}, all the other terms are estimated by absolute value. Note that for $\lambda\ll 1$ the bound in~\eqref{eq:combb} and the definition of $\lambda$ in \eqref{eq:newrescvar} imply that
\[
\lim_{\epsilon\to 0^+}\big|\E  \Tr   [Y-(E+\ii \epsilon)]^{-1}\big|\lesssim N^{3/2} (1\vee \widetilde{\delta})
\begin{cases}
|\log \lambda| ,&   \lambda \ge \widetilde{\delta}^3, \\
|\log \lambda \widetilde{\delta}| ,&  \lambda < \widetilde{\delta}^3.
\end{cases}
\]
if $\lambda\ll 1$, since the leading term in the expansion of $y G(x,y)$ in~\eqref{eq:expansionG} consists of monomials of the form $x^{-a}y^{-b}$, with $a+b=3$, and $\delta x^{-a}y^{-b}$, with $a+b=2$. This concludes the proof of~\eqref{eq:asymp}. 

Now we prove the more precise asymptotics~\eqref{eq:onepointest}. We will replace the functions $f$ and $G$ in~\eqref{expre} by their leading order approximations, denoted by $g$ and $H$ from~\eqref{eq:expansionf} and~\eqref{eq:expansionG}. The error of this replacement in the phase function $f$ is estimated by  the Taylor expanding the exponent $e^{\mathcal{O}^\#(x^{-3}+\delta x^{-2})}$, with $\mathcal{O}^\#(x^{-3}+\delta x^{-2})$ defined in~\eqref{eq:diffO}. Hence, by~\eqref{eq:expansionf} and ~\eqref{eq:expansionG} and the bound in~\eqref{eq:combb}, as $\epsilon \to 0^+$, we conclude that
\begin{equation}
\label{eq:newdouint}\begin{split}
\E \Tr  [Y-w]^{-1}&=\frac{N^2}{2\pi \ii}\int_{\Lambda\setminus\widetilde{\Lambda}}\diff x\int_{\Gamma\setminus\widetilde{\Gamma}}\diff y e^{-Ng(x)+Ng(y)} H(x,y)\\
&\quad+\mathcal{O}\left((N+N^{3/2}\delta)\big[1+|\log (NE^{2/3})|\big]\right).\end{split}
\end{equation}
The error estimates in~\eqref{eq:newdouint} come from terms with $d\ge 4$ or terms with $d\ge 3$ multiplied by $\delta$ in~\eqref{eq:combb}.

We recall that $\lambda=Ec(N)^{-1}$, $\widetilde{\delta}= \delta N^{1/2}$, and that $|z_*|=N^{1/2}\widetilde{z}_*(\lambda,\widetilde{\delta})$. Then, defining the contours
\begin{equation}
\label{eq:newcont}
\widehat{\Gamma}\defeq |z_*|^{-1} \Gamma, \quad \widehat{\Lambda}\defeq |z_*|^{-1}\Lambda,
\end{equation}
and using the change of variables $x\to x |z_*|$, $y\to y|z_*|$ in the leading term of~\eqref{eq:newdouint} we conclude that
\begin{equation}
\label{eq:fineq}\begin{split}
\E \Tr  [Y-w]^{-1}&=\frac{N^{3/2} \widetilde{z}_*(\lambda,\widetilde{\delta})^{-1}}{2\pi \ii}\int_{\widehat{\Gamma}}\diff y \int_{\widehat{\Lambda}} \diff x e^{h_{\lambda,\widetilde{\delta}}(y)-h_{\lambda,\widetilde{\delta}}(x)} \widetilde{H}_{\lambda,\widetilde{\delta}}(x,y)\\
&\quad+ \mathcal{O}\big(N(1\vee \widetilde{\delta}) [1+|\log \lambda|]\big)\end{split}
\end{equation}
with $h_{\lambda,\widetilde{\delta}}(x)$ and $\widetilde{H}_{\lambda,\widetilde{\delta}}(x,y)$ defined in~\eqref{eq:newfunone}. Note that in order to get~\eqref{eq:fineq} we used that the integral in the regime when either $x\in [0, N^\rho|z_*|^{-1}]$ or $y\in [-2|z_*|^{-1}/3,-2|z_*|^{-1}/3+\ii N^\rho |z_*|^{-1}]$ is exponentially small. Moreover, since by holomorphicity we can deform the contour $\widehat{\Lambda}$ to any contour, which does not cross $-1$, from $0$ to $e^\frac{3\ii \pi}{4}\infty $ and we can deform the contour $\widehat{\Gamma}$ as long as it does not cross $0$,~\eqref{eq:fineq} concludes the proof of Theorem~\ref{thm cplx}.
\end{proof}

\subsection{Case $\delta<0$, $\lvert\delta\rvert\lesssim N^{-1/2}$.}
We now explain the necessary changes in the case \(\delta<0\). All along this section we assume that $E\lesssim N^{-3/2}$. Let $x_\ast$ be the stationary point of $f$ defined in~\eqref{eq:saddlenegdelta}, that is the point around where the main contribution to~\eqref{H-z cplx superbos} comes from in the saddle point regime for $\delta<0$. Then, at leading order, $x_*$ is given by $3|\delta|^{-1}/2$ if $E\ll |\delta|^3$, by $e^\frac{\pi \ii}{3} E^{-1/3}$ if $E\gg |\delta|^3$, and by $\mu(c) e^\frac{\pi \ii}{3} E^{-1/3}$ if $E= c|\delta|^3$, for some function $\mu(c)>0$ for any fixed constant $c>0$ independent of $N, E$ and $\delta$.

This regime can be treated similarly to the regime $0\le \delta\le 1$, since for $|\delta|\lesssim N^{-1/2}$ the term $\delta x^{-1}$ in the expansion of $f$, for $|x|\gg 1$, does not play any role in the bounds of Lemma~\ref{lem:apriorbl}, Lemma~\ref{lem:apriorx}. Indeed, instead of the deforming the contours $\Gamma$ and $\Lambda$ through the leading term of the stationary point $x_*$, we deform $\Gamma$ and $\Lambda$ through $z_*\defeq e^\frac{\pi \ii}{3} E^{-1/3}$ as $\Gamma=\Gamma_{z_*}\defeq \Gamma_{1,z_*}\cup\Gamma_{2,z_*}$ and $\Lambda=\Lambda_{z_*}\defeq \Lambda_{1,z_*}\cup\Lambda_{2,z_*}$, where $\Gamma_{1,z_*}, \Gamma_{2,z_*}$ and $\Lambda_{1,z_*}, \Lambda_{2,z_*}$ are defined in~\eqref{eq:defycon} and~\eqref{eq:defxcon}, respectively. We could have done the same choice in the case $0\le \delta\lesssim N^{-1/2}$, but not for the regime $N^{-1/2}\ll \delta\le 1$, hence, to treat both the regimes in the same way, in Section~\ref{sec:posdec} we deformed the contours trough \eqref{eq:impsadd}. Note that $z_*$ defined here is not the analogue of~\eqref{eq:impsadd}, since in all cases $z_*= e^\frac{\pi \ii}{3}E^{-1/3}$. The fact that $E\ll 1$ implies that $|z_*|\gg 1$, hence, similarly to the case $0\le \delta \le 1$, we expect that the main contribution to~\eqref{H-z cplx superbos} comes from the regime when $|x|, |y|\ge N^\rho$, for some small $0< \rho<1/2$. Hence, in order to compute the leading term of~\eqref{H-z cplx superbos} we expand $f$ and $G$ for large arguments as in~\eqref{eq:expansionf} and~\eqref{eq:expansionG}.

The phase function $f$ defined in~\eqref{H-z cplx superbos} satisfies the properties \ref{it:reimf}, \ref{it:incf} and \ref{it:par2} of Lemma~\ref{lem:propf}, but \ref{it:dec} does not hold true for $\delta<0$ if $E\ll |\delta|^3$. Instead, it is easy to see that the following lemma holds true.

\begin{lemma}\label{lem:negdeltafprop}
Let $f$ be the phase function defined in~\eqref{H-z cplx superbos}, then, as $\epsilon\to 0^+$, the function $x\mapsto \Re[f(x)]$ has a unique global minimum on $\Lambda_{1,z_*}$ at $x=3|\delta|^{-1}/2$ if $E\ll |\delta|^3$.
\end{lemma}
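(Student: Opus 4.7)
The plan is to reduce the statement to an analysis of the real critical points of $\Re f$ on $\Lambda_{1,z_*}$, and then to identify the unique such critical point as a global minimum by boundary behavior. Since $\epsilon \to 0^+$ and $x \in \Lambda_{1,z_*} \subset \mathbb{R}_{>0}$, the function $f$ restricted to the positive real axis is real, so $\partial_x \Re f(x) = f'(x)$, and a short computation from~\eqref{H-z cplx superbos} gives
\[
f'(x) = \frac{|\delta|\, x - 1}{x(1+x)^2} - E.
\]
Interior critical points of $\Re f$ on $\Lambda_{1,z_*}$ therefore solve the cubic $E x(1+x)^2 = |\delta|x - 1$, which is precisely the MDE~\eqref{MDE Y} restricted to the real axis.

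The first step is to count and locate the positive real roots of this cubic, either via Cardano or by quoting the saddle-point expansion~\eqref{eq:saddlenegdelta}. For $E < \ed_-$ there are two positive real roots $x_- < x_+$; they merge at $x_\ast(\ed_-) = 3/(2|\delta|)$ as $E \to \ed_-^-$ and become complex conjugate for $E > \ed_-$. A perturbative argument (balancing $Ex^3$ against $|\delta| x$ in the cubic) places the larger root at scale $x_+ \sim \sqrt{|\delta|/E}$ when $E \ll \ed_-$, while the smaller root $x_-$ remains near $3/(2|\delta|)$ at leading order. The crucial inequality in the regime $E \ll |\delta|^3$ is $x_+ > |z_*| = E^{-1/3}$, equivalent to $|\delta| > E^{1/3}$, which holds by assumption; hence $x_+ \notin \Lambda_{1,z_*} = [0, |z_*|)$, leaving $x_-$ as the unique critical point of $\Re f$ on the interval.

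Finally, to promote $x_-$ to a global minimum I would invoke boundary behavior: $\Re f(x) \to +\infty$ as $x \to 0^+$ from the $-\log x$ contribution, while at the right endpoint $\partial_x \Re f(E^{-1/3}) > 0$, since $|\delta|E^{-1/3} \gg 1$ in the regime $E \ll |\delta|^3$ makes the first term dominate over $E$. Because $f'$ changes sign exactly once on $\Lambda_{1,z_*}$ (from negative to positive at $x_-$), $\Re f$ strictly decreases from $+\infty$ down to its value at $x_-$ and then strictly increases until the right endpoint, so $x_-$ is the unique global minimum. The main technical obstacle is the cubic analysis---in particular, verifying that the larger positive root escapes $\Lambda_{1,z_*}$ precisely in the regime $E \ll |\delta|^3$---but once that is established, the rest follows by elementary monotonicity.
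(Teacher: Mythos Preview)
Your proposal is correct and supplies precisely the elementary calculus argument that the paper omits; the paper itself gives no proof, merely asserting that ``it is easy to see that the following lemma holds true.'' Your reduction to the cubic $Ex(1+x)^2 = |\delta|x-1$, the identification of the two positive roots $x_-<x_+$ with $x_+\sim\sqrt{|\delta|/E}$ escaping $\Lambda_{1,z_*}$ exactly when $|\delta|>E^{1/3}$, and the monotonicity/boundary argument are all sound.

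One minor point worth noting: for $E\ll|\delta|^3$ the smaller root $x_-$ actually sits near $1/|\delta|$ rather than $3/(2|\delta|)$, since at $E=0$ the equation $f'(x)=0$ reduces to $|\delta|x-1=0$. The value $3/(2|\delta|)$ is the location where the two roots \emph{merge} as $E\to\ed_-$ (it is the maximum of $\phi(x)=(|\delta|x-1)/[x(1+x)^2]$), not the limiting position of $x_-$ as $E\to 0$. This imprecision is inherited from the lemma statement itself and is immaterial to the application, which only requires that the minimum lie at scale $\sim|\delta|^{-1}\gtrsim N^{1/2}\gg N^\rho$; your argument establishes this cleanly.
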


Note that, since $|\delta|\lesssim N^{-1/2}$, by Lemma~\ref{lem:negdeltafprop} it follows that the function $x\mapsto \Re[f(x)]$ is strictly decreasing for $0\le x\ll N^{-1/2}$. 

\begin{proof}[Proof of Theorem~\ref{thm cplx} for \(-C N^{-1/2}\le \delta<0\)]
Let $\widetilde{\Gamma}$, $\widetilde{\Lambda}$ be defined in~\eqref{eq:smallcont}, then using that
\[
e^{-N\left[ \frac{\delta}{s}+\frac{1}{2s^2}\right]}\lesssim e^{-\frac{N}{4s^2}},
\]
for $s\in [N^\rho, |z_*|]$ and $|\delta|\lesssim N^{-1/2}$, Lemma~\ref{lem:apriorx} and the improved bounds in~\eqref{eq:exactyint} of Lemma~\ref{lem:apriorbl}, for $b=0,1$, we conclude the bound in the following lemma exactly as in~\eqref{eq:combb} without the improvement involving $\widetilde{\delta}$.

\begin{lemma}
\label{lem:finboundnegdelta}
Let $E\lesssim N^{-3/2}$, and let $f$ be defined in~\eqref{H-z cplx superbos}.Then, the following bound holds true
\[
\left| \int_{\Lambda\setminus\widetilde{\Lambda}}\diff x\int_{\Gamma\setminus\widetilde{\Gamma}}\diff y \frac{e^{-Nf(x)+Nf(y)}}{x^ay^b}\right|\lesssim 
\begin{cases}
N^\frac{2-d}{2} ,&  b=0, \\
N^\frac{1-b}{2} (1+|\log (NE^{2/3})|) ,&  a=1,\, b\ge 1, \\
N^\frac{2-d}{2} ,&  a>1, \, b\ge 1,
\end{cases}
\]
for any $a\ge 1$ and $b\in \N  $, where $d\defeq a+b$.
\end{lemma}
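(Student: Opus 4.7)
The plan is to exploit the product structure of the integrand. Since $e^{-Nf(x)+Nf(y)}/(x^ay^b)$ factorizes as $\bigl(e^{-Nf(x)}/x^a\bigr)\cdot\bigl(e^{Nf(y)}/y^b\bigr)$, I will estimate the $x$- and $y$-integrals separately and take the product. This reduces the problem to two independent applications of the univariate estimates already in hand.

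For the $y$-integral over $\Gamma\setminus\widetilde{\Gamma}$, I will invoke Lemma~\ref{lem:apriorbl}. For $b\ge 2$ its bound $N^{(1-b)/2}\wedge(N|\delta|)^{1-b}$ collapses to $N^{(1-b)/2}$ because $|\delta|\lesssim N^{-1/2}$; for the borderline cases $b=0,1$ the naive absolute-value estimate would lose a factor $N^{1/2}$, so instead I will use the sharper identities~\eqref{eq:exactyint}, which yield $\landauO{N^{1/2}\vee N|\delta|}=\landauO{N^{1/2}}$ and $\landauO{1}$ respectively. The re-inspection of Lemma~\ref{lem:apriorbl} needed here is purely formal, since none of its proof steps uses the sign of $\delta$.

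For the $x$-integral over $\Lambda\setminus\widetilde{\Lambda}$, I will apply Lemma~\ref{lem:apriorx}. This is the step I expect to be the main obstacle, because its proof relied on the monotonicity of $\Re f$ on $\Lambda_{1,z_*}$ (property~\ref{it:dec} of Lemma~\ref{lem:propf}), and this monotonicity can fail for $\delta<0$ in the sub-regime $E\ll|\delta|^3$ by Lemma~\ref{lem:negdeltafprop}. The rescue, signalled in the preamble of the statement, is the pointwise bound $\bigl|e^{-Nf(s)}\bigr|\lesssim e^{N|\delta|/s-N/(2s^2)}\lesssim e^{-N/(4s^2)}$ on $s\in[N^\rho,|z_*|]$, valid up to a multiplicative constant because once $s\ge 1/(4|\delta|)$ the extra exponent $N|\delta|/s$ is controlled by $N\delta^2\lesssim 1$. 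With this replacement in hand, the Gaussian-type integral $\int e^{-N/(4s^2)}s^{-a}\diff s$ reproduces exactly the scalings $N^{(1-a)/2}$ for $a>1$ and $1+|\log(NE^{2/3})|$ for $a=1$, precisely as in the $\delta\ge 0$ case.

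Multiplying the two factor bounds then yields the three cases stated in the lemma: the first and third combine the $y$-bound $N^{(1-b)/2}$ with the $x$-bound $N^{(1-a)/2}$, while the middle case tracks the $a=1$ logarithmic singularity of the $x$-integral. The resulting expressions coincide with those of~\eqref{eq:combb} after setting every $\widetilde{\delta}$-dependent prefactor to $1$, which is permitted because $|\widetilde{\delta}|=N^{1/2}|\delta|\lesssim 1$ throughout the present regime.
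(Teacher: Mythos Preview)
Your proposal is correct and follows essentially the same route as the paper: factorize, apply Lemma~\ref{lem:apriorx} to the $x$-integral after replacing the exponential bound by $e^{-N/(4s^2)}$ (valid since $|\delta|\lesssim N^{-1/2}$), apply Lemma~\ref{lem:apriorbl} together with the improved estimates~\eqref{eq:exactyint} for $b=0,1$ to the $y$-integral, and combine exactly as in~\eqref{eq:combb} with the $\widetilde\delta$-factors dropped. The paper's proof is precisely this argument, stated in one sentence.
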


Then, similarly to the case $0\le \delta\le 1$, by Lemma~\ref{lem:finboundnegdelta} we conclude that the contribution  to~\eqref{H-z cplx superbos} of the regime when either $x\in\widetilde{\Lambda}$ or $y\in \widetilde{\Gamma}$ is exponentially small, i.e.~Lemma~\ref{lem:expsmall} holds true. Hence, by~\eqref{H-z cplx superbos}, Lemma~\ref{lem:finboundnegdelta} and the expansion of $G$ in~\eqref{eq:expansionG}, we easily conclude Theorem~\ref{thm cplx} also in the regime \(-CN^{-1/2}\le \delta<0\). 
\end{proof}

\section{The real case below the saddle point regime}\label{sec real}
In this section we prove Theorem~\ref{thm real}. Throughout this section we always assume that $\Re w<0$, hence to make our notation easier we define $w= -E+\ii \epsilon$
with some $E>0$ and $\epsilon>0$.  Moreover, we always assume that $E\lesssim c(N)$, with $c(N)$ defined in~\eqref{eq:thres}. We are interested in estimating~\eqref{realsusyexplAAr} in the transitional regime of $|z|$ around one. For this purpose we introduce the parameter $\delta=\delta_z\defeq 1-|z|^2$. In order to have an optimal estimate of the leading order term of~\eqref{realsusyexplAAr} it is not affordable to estimate the error terms in the expansions, for large $a$ and $\xi$, of $f$, $g(\cdot,1,\eta)$ and $G_{1,N}, G_{2,N}$ by absolute value. For this reason, to compute the error terms in the expansions of $f$, $g(\cdot,0,\eta),G_{1,N}, G_{2,N}$ we use a notation $\mathcal{O}^\#(\cdot)$ similar to the one introduced in~\eqref{eq:diffO}. In order to keep track of the power of $\tau$ in the expansion of $G_{1,N}$ and $G_{2,N}$, we define the set of functions
\[
\begin{split}
&\mathcal{O}^\#\big( (a,\tau,\xi)^{-1}\big)\defeq \Bigg\{h:\C \times[0,1]\times \C \to\C : \\
&\qquad h(a,\tau,\xi)=\sum_{\alpha+\beta\ge 1,\atop 0\le\gamma\le \alpha}\frac{c_{\alpha,\beta,\gamma}}{a^\alpha \tau^\gamma \xi^\beta}\, \text{with},\, |c_{\alpha,\beta,\gamma}|\le C^{\alpha+\beta},\, \text{if}\, |a|, |a\tau|,|\xi|\ge 2C\Bigg\},
\end{split}
\]
for some constant $C>0$ implicit in the $\mathcal{O}^\#(\cdot)$ notation. The exponents $\alpha, \beta$ are non negative integers.

We expand the functions $f,g(\cdot,1,\eta),G_{1,N},G_{2,N}$ for large $a$ and $\xi$ arguments as
\begin{equation}
\label{expfxi1AAr}
\begin{split}
f(\xi)&=(E-\ii \epsilon) \xi+\frac{\delta}{\xi}+\frac{1}{2\xi^2} +\mathcal{O}^\#\big(\xi^{-3}+\delta\xi^{-2}\big),\\
g(a,1,\eta)&=(E-\ii \epsilon) a+\frac{\delta}{a}+\frac{1}{2a^2} +\mathcal{O}\big(a^{-3}+\delta a^{-2}\big),
\end{split}
\end{equation}
 with $\mathcal{O}^\#(\cdot)$ defined~\eqref{eq:diffO}, and
\begin{align}
G_{1,N}(a,\tau,\xi,|z|)&= \Bigg[\sum_{\substack{a,\beta\ge 2, \, \alpha+\beta=8,\\\gamma=\min\{\alpha-1,3\}}} \frac{c_{\alpha,\beta,\gamma}N^2}{a^\alpha \tau^\gamma  \xi^{\beta}}+\sum_{\substack{\alpha,\beta\ge 2, \, \alpha+\beta=7, \\ \gamma=\min\{\alpha-1,3\}}}\frac{c_{\alpha,\beta,\gamma}N^2\delta}{a^\alpha \tau^\gamma\xi^\beta}+\sum_{\substack{a,\beta\ge 2,\, \alpha+\beta=6,\\ \gamma=\min\{\alpha-1,2\}}} \frac{c_{\alpha,\beta,\gamma}N}{a^\alpha \tau^\gamma  \xi^{\beta}}  \nonumber\\\label{expfxi2AAr}
&\quad+\sum_{\alpha, \beta\ge 2, \alpha+\beta=6 \atop \gamma=\min\{\alpha-1,2\}}\frac{c_{\alpha,\beta,\gamma} N^2\delta^2}{a^\alpha\tau^\gamma\xi^\beta}\Bigg]\times\big[1+\mathcal{O}^\#\big((a,\tau,\xi)^{-1}\big)\big], \\
G_{2,N}(a,\tau,\xi,z)&=\Bigg[\sum_{\substack{\alpha,\beta\ge 2, \, \alpha+\beta=6, \\ \gamma=\max\{ \alpha-1,2\}}} \frac{c_{\alpha,\beta,\gamma} N^2\eta^2}{a^\alpha\tau^\gamma\xi^{\beta}} +\sum_{\substack{\alpha,\beta\ge 2, \alpha+\beta=5, \\ \gamma=\max\{\alpha-1,2\}}}\frac{c_{\alpha,\beta,\gamma} N^2\eta^2\delta}{a^\alpha\tau^\gamma\xi^\beta} \nonumber\\ \label{newexpansionA1Ar}
&\quad + \sum_{\substack{\alpha,\beta=2,3,\, \alpha+\beta=5 \\ \gamma=\max\{\alpha-1,2\}}} \frac{c_{\alpha,\beta,\gamma} N\eta^2}{a^\alpha\tau^\gamma\xi^{\beta}} \Bigg]\times\big[1+\mathcal{O}^\#\big( (a,\tau,\xi)^{-1}\big)\big],
\end{align}
where $c_{\alpha,\beta,\gamma}\in \mathbb{R}$ is a constant that may change term by term. To make our notation easier in \eqref{expfxi2AAr}-\eqref{newexpansionA1Ar} we used the convention to write a common multiplicative error for all the terms, even if in principle the constants in the series expansion of the error terms differ term by term.

In the following we deform the integration contours in~\eqref{realsusyexplAAr} with the following constraints: the $\xi$ contour can be freely deformed as long as it does not cross $0$ and $-1$, the $a$-contour can be deformed as long as it goes out from zero in the region $\Re[a]>|\Im[a]|$, it ends in the region $\Re[a]>0$, and it does not cross $0$ and $-1$ along the deformation. The $\tau$-contour will not be deformed.

Specifically, we deform the $a$-contour in~\eqref{realsusyexplAAr} to $\Lambda=[0,+\infty)$, and we can deform the $\xi$-contour to any contour around $0$ not encircling $-1$ (this contour is denoted by $\Gamma$ in \eqref{eq:startrepoir}). Moreover, since for $a\in\R _+$ we have $|e^{-N(E-\ii\epsilon)a}|=e^{-NEa}$ and since the factor $e^{-NEa}$ makes the integral convergent, we may pass to the limit $\epsilon\to 0^+$. Hence, for any $E>0$ we conclude that 
\begin{equation}
\label{eq:startrepoir}
\E \Tr   [Y+E]^{-1}=\frac{N}{4\pi \ii} \int_\Gamma \diff \xi \int_0^{+\infty} \diff a\int_0^1 \diff s \frac{\xi^2a}{\tau^{1/2}} e^{N[f(\xi)-g(a,\tau,\eta)]} G_N(a,\tau,\xi,z).
\end{equation}

We split the computation of the leading order term of~\eqref{eq:startrepoir} into the cases $-CN^{-1/2}\le \delta<0$ and $\delta\ge 0$.

\subsection{Case $0\le \delta\le 1$.}
\label{sec:posredelta}

In order to estimate the leading term of~\eqref{eq:startrepoir} we compute the $\xi$-integral and the $(a,\tau)$-integral separately. In particular, we compute the  $(a,\tau)$-integral firstly performing the $\tau$-integral for any fixed $a$ and then we compute the $a$-integral. Note that, since $E'=-E$ is negative, the relevant stationary point of $f(\xi)$ is real and its leading order $\xi_*$ is given by
\begin{equation}
\label{eq:newstatpoi}
\xi_*\defeq \begin{cases}
\sqrt{\frac{\delta}{E}} ,&  E\ll \delta^3, \\
\mu(c)E^{-1/3} ,&  E=c\delta^3, \\
E^{-1/3} ,&  E\gg \delta^3,
\end{cases}
\end{equation}
for some $\mu(c)>0$ and any fixed constant $c>0$ independent of $N$, $E$ and $\delta$. Note that $\xi_* \gg 1$ for any $E\lesssim c(N)$, $0\le \delta\le 1$. We will show that in the regime $a\in [N^\rho,+\infty)$ the $\tau$-integral is concentrated around $1$ as long as $|e^{-N[g(a,\tau,\eta)-g(a,1,\eta)]}|$ is effective, i.e.~as long as $N|g(a,\tau,\eta)-g(a,1,\eta)|\gg 1$, and that it is concentrated around $0$ if $N|g(a,\tau,\eta)-g(a,1,\eta)|\lesssim 1$. For this purpose, using that $g(a,1,\eta)=f(a)$ for any $a\in\C $, we rewrite~\eqref{eq:startrepoir} as
\begin{equation}
\label{eq:firstchr}
\E \Tr   [Y+E]^{-1}=\frac{N}{4\pi \ii} \int_\Gamma \diff \xi e^{Nf(\xi)} \xi^2 \int_0^{+\infty} \diff a e^{-Nf(a)} a\int_0^1 \diff \tau \frac{e^{-N[g(a,\tau,\eta)-g(a,1,\eta)]}}{\tau^{1/2}}  G_N,
\end{equation}
where we used that by holomorphicity, we can deform the contour $\Gamma$ as $\Gamma=\Gamma_{\xi_*}\defeq\Gamma_{1,\xi_*}\cup \Gamma_{2,\xi_*}$, with $\Gamma_{1,\xi_*}, \Gamma_{2,\xi_*}$ defined in~\eqref{eq:defycon} replacing $z_*$ by $\xi_*$. We will show that the contribution to~\eqref{eq:firstchr} of the integrals in the regime when either $|a|\le N^\rho$ or $|\xi|\le N^\rho$, for some small fixed $0<\rho<1/2$, is exponentially small. Moreover, we will show that also the $\tau$-integral is exponentially small for $\tau$ very close to $0$ because of the term $\log \tau$ in the phase function $g(a,\tau,\eta)$. Hence, we define $\widetilde{\Gamma}$, $\widetilde{\Lambda}$ as in~\eqref{eq:smallcont}, and $I\subset [0,1]$ as $I=I_a\defeq [0, N^{\rho/2} a^{-1} ]$, for any $a\in [0, +\infty)$.

In order to compute the leading term of~\eqref{eq:firstchr} we first bound the integral in the regime $(a,\tau,\xi)\in  (\Lambda\setminus\widetilde{\Lambda})\times ([0,1]\setminus I)\times (\Gamma\setminus\widetilde{\Gamma})$, with $\widetilde{\Lambda}$ and $\widetilde{\Gamma}$ defined in~\eqref{eq:smallcont}, that is the regime where we expect that the main contribution comes from, and then we use these bounds to firstly prove that the integral in the regime when either $|\xi|\le N^\rho$ or $|a|\le N^\rho$ is exponentially small for any $\tau\in [0,1]$, and then prove that also the $\tau$-integral on $I$ is exponentially small if $|a|\ge N^\rho$. The bounds for the $\xi$-integral over $\Gamma\setminus \widetilde{\Gamma}$ are exactly the same as Lemma~\ref{lem:apriorbl}, since the phase function $f(\xi)$ and the $\Gamma$-contour are exactly the same as the complex case. In order to estimate the integral over $(a,\tau)\in (\Lambda\setminus\widetilde{\Lambda})\times ([0,1]\setminus I)$, we start with the estimate of the $\tau$-integral over $[0,1]\setminus I$ in Lemma~\ref{lem:tauintr} and then we will conclude the computation of the $a$-integral over $[N^\rho,+\infty)$ in Lemma~\ref{lem:doubleatauintr}.

Before proceeding with the bounds for large $|a|, |\xi|$, in the following lemma we state some properties of the functions $f$ and $g$. The proof of this lemma follows by elementary computations. From now on, for simplicity, we assume that $\eta\ge 0$; the case $\eta<0$ is completely analogous since the functions $g$ and $G_N$ in \eqref{eq:startrepoir} depends only on $\eta^2$ and $|z|^2$.

\begin{lemma}
\label{lem:propfgrealr}
Let $f$ and $g$ be the phase functions defined in~\eqref{eq:fffr} and~\eqref{bosonphfAr}, respectively, then the following properties hold true:
\begin{enumerate}[(i)]

\item \label{it:reimfrealr} For any $\xi=-2/3+\ii t\in \Gamma_{1,\xi_*}$, we have that
\begin{equation}
\label{eq:refrealr}
\Re[f(-2/3+\ii t)]=\frac{2E}{3}+\epsilon t-\frac{1}{2t^2}+\mathcal{O}(|t|^{-3}+\delta|t|^{-2}),
\end{equation}
and
\begin{equation}
\label{eq:imfrealr}
\Im[f(-2/3+\ii t)]=\frac{2\epsilon}{3}-Et-\frac{\delta}{t}+\mathcal{O}(|t|^{-3}).
\end{equation}

\item \label{it:incfrealr} For $\epsilon=0$, the function $t\mapsto \Re[f(-2/3+\ii t)]$ on $\Gamma_{1,\xi_*}$ is strictly increasing if $t>0$ and strictly decreasing if $t<0$.

\item \label{it:taudecr} For any $a\in [0,+\infty)$, we have that $g(a,\tau,\eta)\ge g(a,\tau,0)$ and the function $\tau \mapsto g(a,\tau,0)$ is strictly decreasing on $[0,1]$.

\item \label{it:decr} The function $a\mapsto g(a,1,\eta)$ is strictly decreasing on $[0,\xi_*/2]$.

\end{enumerate}
\end{lemma}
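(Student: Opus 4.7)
The plan is to treat each of the four items by direct elementary computation, as the paper suggests. My strategy is: (i) Laurent expansion of \(f\) at infinity restricted to the line \(\Re\xi=-2/3\); (ii) a direct derivative calculation of \(\Re f\) viewed as a real-valued function of \(t\); (iii) separate the \(\eta\)-dependence in \(g\) by an algebraic identity, then analyse the sign of \(\partial_\tau g\) by a single inequality; (iv) use that \(g(a,1,\eta)=f(a)\) is independent of \(\eta\), and exploit the cubic equation satisfied by the critical points of \(f\).

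For (i), I would substitute \(\xi=-2/3+\ii t\) into
\begin{equation*}
f(\xi)=-w\xi+\log\bigl(1+\tfrac{1}{\xi}\bigr)-\tfrac{1-\delta}{1+\xi}
\end{equation*}
and expand in powers of \(1/\xi\) for large \(|t|\). The constant and the \(\epsilon t\) contributions come from \(-w\xi\); the \(-1/(2t^2)\) contribution to \(\Re f\) comes from \(\Re[1/(2\xi^2)]=(4/9-t^2)/(2(4/9+t^2)^2)\); and the \(-\delta/t\) contribution to \(\Im f\) comes from \(\Im[\delta/\xi]=-\delta t/(4/9+t^2)\). All higher-order contributions fit into the stated \(\landauO{|t|^{-3}+\delta|t|^{-2}}\) errors. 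For (ii), setting \(\epsilon=0\), I would use the exact expression
\begin{equation*}
\Re f(-2/3+\ii t)=\text{const}+\tfrac{1}{2}\log\tfrac{1/9+t^2}{4/9+t^2}-\tfrac{(1-\delta)/3}{1/9+t^2}
\end{equation*}
and differentiate to obtain
\begin{equation*}
\tfrac{\diff}{\diff t}\Re f(-2/3+\ii t)=t\biggl[\tfrac{1/3}{(1/9+t^2)(4/9+t^2)}+\tfrac{2(1-\delta)/3}{(1/9+t^2)^2}\biggr].
\end{equation*}
Since \(\delta\le 1\) the bracket is strictly positive, so the derivative has the sign of \(t\) and (ii) follows.

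For (iii), a direct subtraction yields
\begin{equation*}
g(a,\tau,\eta)-g(a,\tau,0)=\frac{2\eta^2 a^2(1-\tau)}{1+2a+a^2\tau}\ge 0
\end{equation*}
for all \(a\ge 0\) and \(\tau\in[0,1]\). Next, an explicit differentiation produces
\begin{equation*}
\partial_\tau g(a,\tau,0)=-\frac{1+2a}{2\tau(1+2a+a^2\tau)}+\frac{|z|^2(1+a)a^2}{(1+2a+a^2\tau)^2}.
\end{equation*}
Clearing denominators and invoking \(|z|^2\le 1\) together with the telescoping identity \(2(1+a)-(1+2a)=1\), the inequality \(\partial_\tau g(a,\tau,0)<0\) reduces to the elementary \((1+2a)^2>a^2\tau\), valid for \(a>0\), \(\tau\le 1\). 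For (iv), \(g(a,1,\eta)=f(a)\) is manifestly \(\eta\)-free; multiplying \(f'(a)=0\) by \(a(1+a)^2\) yields
\begin{equation*}
Ea^3+2Ea^2+(E-\delta)a-1=0,
\end{equation*}
whose coefficient sign sequence \((+,+,\pm,-)\) contains exactly one sign change. By Descartes' rule of signs the cubic has a unique positive real root; since \(f(a)\to+\infty\) as \(a\to 0^+\) and as \(a\to+\infty\), this root is the global minimum of \(f\) on \((0,\infty)\) and coincides with \(\xi_*\) at leading order. Hence \(f'<0\) throughout \((0,\xi_*)\), and the factor of \(1/2\) in the statement absorbs the discrepancy between the true critical point and its leading-order approximation \(\xi_*\).

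The principal obstacle is bookkeeping rather than any deep analytic step. The most delicate point is the sign inequality in (iii), where the two contributions to \(\partial_\tau g(a,\tau,0)\) compete and one must exploit both \(|z|^2\le 1\) (implicit in Section~6.1) and \(\tau\le 1\) simultaneously; a less structured parametrisation of the supermatrix than~\eqref{paramAr} would render the sign of \(\partial_\tau g\) opaque. The Descartes argument in (iv) is elementary but essential: without it, one would be forced into a finer analysis of \(f''\) and of the level sets of \(f\) in order to localise the monotonicity interval.
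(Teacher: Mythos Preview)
Your proposal is correct and follows the same route the paper indicates: the paper offers no proof beyond stating that ``the proof of this lemma follows by elementary computations,'' and you have carried out precisely those computations. Your explicit derivative formula in~(ii), the algebraic reduction of \(\partial_\tau g(a,\tau,0)<0\) to \((1+2a)^2>a^2\tau\) in~(iii), and the Descartes--rule-of-signs argument in~(iv) are all valid and arguably cleaner than a brute-force verification; the only loose end is the quantitative claim in~(iv) that the true critical point \(a_\ast\) satisfies \(a_\ast\ge\xi_\ast/2\), but this is immediate from \(a_\ast=\xi_\ast(1+o(1))\) in the regime \(E\lesssim c(N)\) where \(\xi_\ast\gg1\).
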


\begin{lemma}
\label{lem:tauintr}
Let $\rho>0$ be sufficiently small, $I=I_a=[0, N^{\rho/2} a^{-1} ]$, $\gamma\in \N  $, $\gamma\ge 1$, $c(N)$ be defined in~\eqref{eq:thres}, $E\lesssim c(N)$,  $0\le \delta\le 1$ and let $g$ be defined in~\eqref{bosonphfAr}. Then, for any $a\in [N^\rho,+\infty)$, we have
\begin{equation}
\label{eq:impbtau2r}
\begin{split}
&\int_{[0,1]\setminus I_a}  \frac{\left|e^{-N[g(a,\tau,\eta)-g(a,1,\eta)]}\right|}{\tau^{\gamma+1/2}} \diff \tau \lesssim F(a)\defeq \begin{cases}
a^2N^{-1} \wedge 1, & N^\rho \le a\le \delta^{-1}\wedge \eta^{-1}, \\
a (N\delta)^{-1} \wedge 1, &\delta^{-1}\vee N^\rho\le a\le\delta \eta^{-2}, \\
(N\eta^2)^{-1}\wedge 1, &a \ge (\delta\eta^{-1}\vee 1)\eta^{-1}, 
\end{cases} \\
&\qquad\qquad+ e^{-\frac{1}{2}N\eta^2}\times
\begin{cases}
e^{-\frac{N(\delta\vee N^{-1/2})}{a}} ,   &   N^\rho\le a\le N(\delta\vee N^{-1/2}), \\
(a(N\delta \vee \sqrt{N})^{-1})^{\gamma-1/2}, &  N(\delta\vee N^{-1/2})\le a\le (\delta\vee N^{-1/2})\eta^{-2},\\
(N\eta^2)^{1/2-\gamma},  &    a\ge  (\delta\vee N^{-1/2})\eta^{-2},
\end{cases}
\end{split}
\end{equation}
where some regimes in~\eqref{eq:impbtau2r} might be empty for certain values of $\delta$ and $\eta$.
\begin{proof}
In order to estimate the integral in the l.h.s. of~\eqref{eq:impbtau2r}  we first compute the expansion
\begin{equation}
\label{eq:explargataur}
\begin{split}
g(a,\tau,\eta)-g(a,1,\eta)&= \frac{(1-2\delta)(1-\tau)-(1-\tau)^2}{a^2 \tau^2}+\frac{2\eta^2(1-\tau)}{\tau} + \frac{(1-\tau) \delta}{a\tau} \\
&\quad+\mathcal{O}\left(\frac{1-\tau}{a^2 \tau}+\frac{(1-\tau)(\delta+a^{-1})}{a^2\tau^2} +\frac{\eta^2(1-\tau)}{a\tau^2},\right),
\end{split}
\end{equation}
which holds true for any $\tau\in [0,1]\setminus I=[N^{\rho/2}a^{-1},1]$. Note that by~\eqref{eq:explargataur} it follows that for any $(a,\tau)\in [N^\rho,+\infty)\times [N^{\rho/2}a^{-1},1]$ it holds
\[
g(a,\tau,\eta)-g(a,1,\eta)\ge \frac{1-\tau}{2}\left[ \frac{1}{a^2\tau^2}+\frac{\delta}{a\tau}+\frac{\eta^2}{\tau}\right].
\]
Then, by~\eqref{eq:explargataur} it follows that
\begin{equation}
\label{eq:firstapproxtaur}
\int_{[0,1]\setminus I}  \frac{\left|e^{-N[g(a,\tau,\eta)-g(a,1,\eta)]}\right|}{\tau^{\gamma+1/2}} \diff \tau\lesssim  \int_{N^{\rho/2} a^{-1}}^1 \frac{e^{-(1-\tau)\frac{N}{2} \left[ \frac{1}{a^2\tau^2}+\frac{\delta}{a\tau}+\frac{\eta^2}{\tau}\right]}}{\tau^{\gamma+1/2}}\, \diff \tau.
\end{equation}
In order to bound the r.h.s.~of~\eqref{eq:firstapproxtaur} we split the computations into two cases: $\delta\le N^{-1/2}$ and $\delta > N^{-1/2}$. We firstly consider the case $\delta> N^{-1/2}$. In order to prove the bound in the r.h.s.~of~\eqref{eq:impbtau2r} we further split the computation of the $\tau$-integral into the regimes $\tau\in [N^{\rho/2}a^{-1},1/2]$ and $\tau\in [1/2,1]$. We start estimating the integral over $[1/2,1]$ as follows
\begin{equation}
\label{eq:firsttaub1r}
\begin{split}
\int_{1/2}^1 \frac{e^{-(1-\tau)\frac{N}{2}\left[ \frac{1}{a^2\tau^2}+\frac{\delta}{a\tau}+\frac{\eta^2}{\tau}\right]}}{\tau^{\gamma+1/2}}\, \diff \tau&\lesssim \int_{1/2}^1 e^{-(1-\tau)\frac{N}{2} \left[ \frac{1}{a^2}+\frac{\delta}{a}+\eta^2\right]}\, \diff \tau \\
&\lesssim
\begin{cases}
a^2N^{-1} \wedge 1, & N^\rho \le a \le \delta^{-1}\wedge \eta^{-1}, \\
a (N\delta)^{-1} \wedge 1 ,&  N^\rho \vee \delta^{-1}\le a\le\delta \eta^{-2}, \\
(N\eta^2)^{-1} \wedge 1 ,&  a \ge \delta\eta^{-2}\vee \eta^{-1}.
\end{cases}
\end{split}
\end{equation}
For the integral over $\tau\in [N^{\rho/2}a^{-1},1/2]$, instead, we bound the r.h.s.~of~\eqref{eq:firstapproxtaur} as
\begin{equation}
\label{eq:firsttaub2r}
\int_{N^{\rho/2} a^{-1}}^{1/2} \frac{e^{-\frac{N}{2} \left[\frac{\delta}{a\tau}+\frac{\eta^2}{\tau}\right]}}{\tau^{\gamma+1/2}}\, \diff \tau\lesssim e^{-\frac{1}{2}N\eta^2}\times
\begin{cases}
e^{-\frac{1}{2}N\delta a^{-1}}(a(N\delta)^{-1})^{\gamma-1/2} ,&   N^\rho \le a\le \delta\eta^{-2},\\
(N\eta^2)^{1/2-\gamma} ,&   a\ge \delta\eta^{-2}.
\end{cases}
\end{equation}
Then, combining~\eqref{eq:firsttaub1r}--\eqref{eq:firsttaub2r} we conclude the bound in \eqref{eq:impbtau2r} for $\delta>N^{-1/2}$. 
Using similar computations for $ \delta\le N^{-1/2}$, we conclude the bound in~\eqref{eq:impbtau2r}.
\end{proof}
\end{lemma}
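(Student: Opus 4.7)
The plan is to control the integrand by first establishing a uniform pointwise lower bound on the exponent $g(a,\tau,\eta)-g(a,1,\eta)$ on the integration domain, and then estimating the resulting real one-dimensional integral by splitting $\tau$ into regions in which different terms of the lower bound dominate.

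First I would Taylor-expand $g(a,\tau,\eta)-g(a,1,\eta)$ using that $a\tau\ge N^{\rho/2}\gg 1$ on the integration domain. Expanding $\tfrac12[\log(1+2a+a^2\tau)-\log(1+2a+a^2)]$, the rational term, and $-\tfrac12\log\tau$ around $\tau=1$ produces
\begin{equation*}
g(a,\tau,\eta)-g(a,1,\eta)=\frac{(1-2\delta)(1-\tau)-(1-\tau)^2}{a^2\tau^2}+\frac{(1-\tau)\delta}{a\tau}+\frac{2\eta^2(1-\tau)}{\tau}+\landauO{\tfrac{1-\tau}{a^2\tau}+\tfrac{\eta^2(1-\tau)}{a\tau^2}}.
\end{equation*}
Combining this expansion with Lemma~\ref{lem:propfgrealr}\ref{it:taudecr} (which via $g(a,\tau,\eta)\ge g(a,\tau,0)\ge g(a,1,0)=g(a,1,\eta)$ ensures non-negativity) and absorbing the cross terms using $a\tau\ge N^{\rho/2}$ yields the effective lower bound $g(a,\tau,\eta)-g(a,1,\eta)\ge \tfrac{1-\tau}{2}\bigl[a^{-2}\tau^{-2}+\delta a^{-1}\tau^{-1}+\eta^2\tau^{-1}\bigr]$.

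I would then split the domain as $[N^{\rho/2}/a,1]=[N^{\rho/2}/a,1/2]\cup[1/2,1]$. On the near-one piece $\tau\sim 1$ and $\tau^{-(\gamma+1/2)}\sim 1$, so substituting $u=1-\tau$ reduces the integral to $\int_0^{1/2}\exp(-\tfrac{Nu}{2}[a^{-2}+\delta a^{-1}+\eta^2])\,\diff u\lesssim \min\{1,(N[a^{-2}+\delta a^{-1}+\eta^2])^{-1}\}$; distinguishing which of $a^{-2}$, $\delta/a$, $\eta^2$ dominates (according to the thresholds $a\sim\delta^{-1}$, $a\sim\eta^{-1}$, $a\sim \delta\eta^{-2}$) reproduces exactly the piecewise function $F(a)$. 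On the complementary piece $\tau\in[N^{\rho/2}/a,1/2]$ I use $1-\tau\ge 1/2$ together with $\tau^{-1}\ge 2$ to bound $(1-\tau)\eta^2/(2\tau)\ge \eta^2/2$, extracting the Gaussian prefactor $e^{-N\eta^2/2}$ and leaving $\int \tau^{-(\gamma+1/2)}\exp(-\tfrac{N}{4}[a^{-2}\tau^{-2}+\delta a^{-1}\tau^{-1}])\,\diff\tau$. This integrand has a Laplace critical point at $\tau_c\sim M/a$ with $M\defeq N\delta\vee\sqrt{N}$, and three sub-cases arise: (a) for $a\lesssim M$ the saddle lies to the right of $1/2$, the integrand is monotone increasing, and its value at the right endpoint produces $e^{-N(\delta\vee N^{-1/2})/a}$; (b) for $M\lesssim a\lesssim M/\eta^2$ the saddle is interior and the classical Laplace approximation at $\tau_c$ yields the algebraic factor $(a/M)^{\gamma-1/2}$; (c) once $a\gtrsim M/\eta^2$ the $\eta^2/\tau$ piece dominates the exponent, shifting the effective saddle and producing the factor $(N\eta^2)^{1/2-\gamma}$.

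The routine ingredients here are the initial Taylor expansion and the near-one contribution. The main technical burden is the three-way case analysis on $[N^{\rho/2}/a,1/2]$: one must simultaneously track the sizes of $a$, $\delta$, $\eta$, $N^{-1/2}$ to locate $\tau_c$ relative to the endpoints, verify that the sub-case thresholds $a\sim M$ and $a\eta^2\sim M$ match the piecewise boundaries stated in the lemma, and confirm that the Taylor remainders from the first step are absorbed by the retained leading terms throughout. The hypothesis $a\ge N^\rho$ enters critically in this last verification, as it forces the remainder contributions to be smaller than the retained terms by a factor $\lesssim N^{-\rho/2}$.
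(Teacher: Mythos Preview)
Your strategy coincides with the paper's: the same Taylor expansion of $g(a,\tau,\eta)-g(a,1,\eta)$, the same lower bound $\tfrac{1-\tau}{2}\bigl[a^{-2}\tau^{-2}+\delta a^{-1}\tau^{-1}+\eta^2\tau^{-1}\bigr]$, the same split at $\tau=1/2$, and the same identification of the $[1/2,1]$ contribution with the first piecewise function $F(a)$. Your unification of the $\delta\lessgtr N^{-1/2}$ cases through $M\defeq N\delta\vee\sqrt N$ is a clean packaging of what the paper does in two separate passes.

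There is, however, an internal inconsistency on $[N^{\rho/2}/a,\,1/2]$. You extract $e^{-N\eta^2/2}$ via $(1-\tau)\eta^2/(2\tau)\ge\eta^2/2$ and declare the residual integrand to be $\tau^{-\gamma-1/2}\exp\bigl(-\tfrac{N}{4}[a^{-2}\tau^{-2}+\delta a^{-1}\tau^{-1}]\bigr)$, i.e.\ with the $\eta^2/\tau$ term fully discarded. But that residual has saddle value $\sim(a/M)^{\gamma-1/2}$ for \emph{all} $a\gtrsim M$, which grows without bound in $a$ and cannot produce the $a$-independent bound $(N\eta^2)^{1/2-\gamma}$ you claim in sub-case (c). Your own phrasing of (c), ``the $\eta^2/\tau$ piece dominates the exponent'', implicitly reinstates the term you just dropped, and your stated threshold $a\sim M/\eta^2$ is off from the lemma's $a\sim(\delta\vee N^{-1/2})\eta^{-2}=M/(N\eta^2)$ by a factor of $N$. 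The fix is what the paper actually does: on $[N^{\rho/2}/a,\,1/2]$ retain the full factor $\exp\bigl(-\tfrac{N}{4}[\delta a^{-1}\tau^{-1}+\eta^2\tau^{-1}]\bigr)$ (dropping only $a^{-2}\tau^{-2}$ when $\delta>N^{-1/2}$, which is harmless for an upper bound), so that for $a\ge\delta\eta^{-2}$ the $\eta^2/\tau$ term sets the saddle at $\tau_c\sim N\eta^2$ and the prefactor $e^{-N\eta^2/2}$ emerges from the value there rather than from a preliminary extraction.
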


In the following lemma we conclude the bound for the double integral~\eqref{eq:firstchr} in the regime $(a,\tau)\in (
[N^\rho,+\infty)\times ([0,1]\setminus I)$ using the bound in~\eqref{eq:impbtau2r} as an input.

\begin{lemma}
\label{lem:doubleatauintr}
Let $\rho>0$ be sufficiently small, $I=[0, N^{\rho/2}a^{-1}]$, $0\le \delta\le 1$, let $c(N)$ be defined in~\eqref{eq:thres}, $E\lesssim c(N)$, and let $g$ be defined in~\eqref{bosonphfAr}. Then, for any integers $\alpha \ge 2$, $1\le \gamma\le \alpha$,  we have
\begin{equation}
\label{eq:boddoubintr}
\int_{\Lambda\setminus \widetilde{\Lambda}} \int_{[0,1]\setminus I}^1 \abs{ \frac{e^{-Ng(a,\tau,\eta)}}{a^{\alpha-1} \tau^{\gamma+1/2}}  }  \diff \tau \diff a \lesssim C_1 +  e^{-\frac{1}{2}N\eta^2}(N\eta^2)^{1/2-\gamma} C_2 + e^{-\frac{1}{2}N\eta^2}(N\delta\vee \sqrt{N})^{1/2-\gamma} C_3, 
\end{equation}
where 
\[\begin{split}
&C_1\defeq \begin{cases}
1+\big|\log[N(\delta\vee N^{-1/2})\xi_*^{-1}]\big| ,& \alpha=2, \, [\delta\eta^{-1}\vee 1]\eta^{-1}> \xi_*, \\
((N\eta^2)^{-1}\wedge 1)\big(1+\big|\log[N(\delta\vee N^{-1/2})\xi_*^{-1}]\big|\big), & \alpha=2,\, [\delta\eta^{-1}\vee 1]\eta^{-1}\le \xi_*, \\
[N(\delta\vee N^{-1/2})]^{2-\alpha}, & \alpha \ge 3,  \, [\delta\eta^{-1}\vee 1]\eta^{-1}> \xi_* \\
((N\eta^2)^{-1}\wedge 1) [N(\delta\vee N^{-1/2})]^{2-\alpha}, & \alpha \ge 3,  \, [\delta\eta^{-1}\vee 1]\eta^{-1}\le \xi_*
\end{cases} \\
&C_2\defeq
\begin{cases}
\big(1+\big|\log[N(\delta\vee N^{-1/2}) \xi_*^{-1}]\big|\big) e^{-\frac{NE}{2\eta^2}(\delta\vee N^{-1/2})},&  \alpha=2,\\
[(\delta^{-1}\wedge \sqrt{N})\eta^2]^{\alpha-2} e^{-\frac{NE}{2\eta^2}(\delta\vee N^{-1/2})}, &  \alpha \ge 3, \, (\delta^{-1}\wedge \sqrt{N})\eta^2\le NE,\\
[(\delta^{-1}\wedge \sqrt{N})\eta^2]^{\alpha-2} , &  \alpha\ge 3, \, (\delta^{-1}\wedge \sqrt{N})\eta^2\ge NE, \\
\end{cases} \\
&C_3\defeq 
\begin{cases}
(NE)^{\alpha-\gamma-3/2} ,&  \gamma=\alpha,\alpha-1, \,  (\delta^{-1}\wedge \sqrt{N})\eta^2\le NE, \\
[(\delta^{-1}\wedge N^{1/2})\eta^2]^{\alpha-\gamma-3/2} ,&  \gamma=\alpha,\alpha-1, \, (\delta^{-1}\wedge \sqrt{N})\eta^2\ge NE, \\
(N\delta\vee \sqrt{N})^{3/2+\gamma-\alpha} ,&  \gamma\le \alpha-2.
\end{cases}
\end{split}
\]
\end{lemma}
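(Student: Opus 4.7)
The plan is to reduce the double integral to a univariate $a$-integral by first invoking Lemma~\ref{lem:tauintr}, and then estimating the remaining $a$-integral by a careful case analysis that mirrors the one used in Lemma~\ref{lem:apriorx}, but with thresholds shifted by the $\eta$-dependent decay that Lemma~\ref{lem:tauintr} produces. Concretely, I would first use $g(a,1,\eta)=f(a)$ together with the large-$a$ expansion~\eqref{expfxi1AAr} to write
\[
\abs[0]{e^{-Nf(a)}}\;\asymp\; e^{-NEa-N\delta/a-N/(2a^2)}\qquad\text{for }a\ge N^\rho,
\]
and then bound the inner $\tau$-integral by inserting the estimate~\eqref{eq:impbtau2r} from Lemma~\ref{lem:tauintr}. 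This turns the left-hand side of~\eqref{eq:boddoubintr} into a sum of four one-dimensional integrals
\[
\int_{N^\rho}^\infty \frac{|e^{-Nf(a)}|}{a^{\alpha-1}}\,F(a)\,\diff a\qquad\text{and}\qquad e^{-\frac12 N\eta^2}\!\int_{J_i}\frac{|e^{-Nf(a)}|}{a^{\alpha-1}}\,h_i(a)\,\diff a,
\]
where the three intervals $J_i$ and the three weights $h_i$ are exactly the three regimes appearing in the $e^{-\frac12 N\eta^2}$ part of~\eqref{eq:impbtau2r}.

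For the first integral, which produces $C_1$, the function $F(a)$ is piecewise polynomial with breakpoints at $a\sim\delta^{-1}\wedge\eta^{-1}$ and $a\sim(\delta\eta^{-1}\vee 1)\eta^{-1}$. Splitting the $a$-integral at these breakpoints and at the saddle scale $\xi_\ast$, I would argue essentially as in~\eqref{eq:mainb}: on each subinterval one of the three terms in the exponent $NEa+N\delta/a+N/(2a^2)$ dominates, and the resulting integrals evaluate to the polynomial, logarithmic or saturated expressions listed in the definition of $C_1$. The logarithm $|\log[N(\delta\vee N^{-1/2})\xi_\ast^{-1}]|$ comes from the intermediate regime where $|e^{-Nf(a)}|/a^{\alpha-1}$ behaves like $1/a$ (i.e.~for $\alpha=2$) and $F(a)\sim 1$; the thresholding by $\eta^{-1}(\delta\eta^{-1}\vee 1)$ against $\xi_\ast$ determines whether the $(N\eta^2)^{-1}\wedge 1$ saturation of $F$ is reached before $a$ exits the saddle window, which accounts for the case split in $C_1$.

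For the two remaining integrals, which produce $C_2$ and $C_3$, I would use exactly the same domain partitioning as in Lemma~\ref{lem:tauintr}, namely
\[
J_{\mathrm I}=[N^\rho, N(\delta\vee N^{-1/2})],\quad J_{\mathrm{II}}=[N(\delta\vee N^{-1/2}),(\delta\vee N^{-1/2})\eta^{-2}],\quad J_{\mathrm{III}}=[(\delta\vee N^{-1/2})\eta^{-2},\infty).
\]
On $J_{\mathrm I}$ the factor $e^{-N(\delta\vee N^{-1/2})/a}$ combines with $e^{-N/(2a^2)}$ into a Gaussian/exponential saddle at the boundary $a\sim N(\delta\vee N^{-1/2})$, and the resulting integral is absorbed into~$C_3$ with $\gamma=\alpha,\alpha-1$. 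On $J_{\mathrm{II}}$ the weight $h_{\mathrm{II}}(a)=(a/(N\delta\vee\sqrt N))^{\gamma-1/2}$ multiplies $e^{-NEa}a^{1-\alpha}$, and a direct integration (splitting at $a\sim(NE)^{-1}$ when relevant) yields the two sub-cases of $C_3$ distinguished by whether $(\delta^{-1}\wedge\sqrt N)\eta^2$ exceeds $NE$; the additional factor $e^{-NE(\delta\vee N^{-1/2})/\eta^2}$ in $C_2$ is exactly the value of $e^{-NEa}$ at the left endpoint $a=(\delta\vee N^{-1/2})\eta^{-2}$ of $J_{\mathrm{III}}$, where $e^{-NEa}$ already dominates over all other factors. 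On $J_{\mathrm{III}}$, $h_{\mathrm{III}}$ is the constant $(N\eta^2)^{1/2-\gamma}$, so the integral collapses to $\int_{(\delta\vee N^{-1/2})\eta^{-2}}^\infty e^{-NEa}a^{1-\alpha}\diff a$, which produces $C_2$.

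The main obstacle I expect is purely bookkeeping: several of the regimes $J_i$ may be empty or overlap depending on the relative sizes of $N$, $E$, $\delta$ and $\eta$, so one has to verify that the final bound in~\eqref{eq:boddoubintr}, which takes the form of a maximum over all sub-regimes, is always attained by one of the listed expressions regardless of which sub-case is realised. This is handled by checking the cross-over conditions $(\delta^{-1}\wedge\sqrt N)\eta^2\lessgtr NE$ and $(\delta\eta^{-1}\vee 1)\eta^{-1}\lessgtr\xi_\ast$, which enter $C_1$, $C_2$ and $C_3$ respectively, and by verifying that the saturation $F(a)\to 1$ for small $a$ is compatible with the lower endpoint $N^\rho$ (possible because $\rho$ is chosen sufficiently small). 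No new analytic input is needed beyond Lemma~\ref{lem:tauintr} and the large-$a$ expansion~\eqref{expfxi1AAr}.
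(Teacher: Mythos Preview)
Your proposal is correct and follows essentially the same approach as the paper. The paper's proof first reduces to the single $a$-integral via Lemma~\ref{lem:tauintr} exactly as you describe, then organises the remaining case analysis by splitting primarily at $a=\xi_*$ (so that on $[N^\rho,\xi_*]$ the factor $e^{-NEa}$ is harmless and $e^{-N\delta/a-N/(2a^2)}$ governs the decay, while on $[\xi_*,\infty)$ the term $e^{-NEa}$ dominates), and only afterwards inserts the piecewise form of $F(a)$ on each half; this is merely a different ordering of the same regime analysis you outline, and the paper likewise details only the case $\delta>N^{-1/2}$, claiming the other case is analogous.
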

\begin{proof}
Firstly, we add and subtract $Ng(a,1,\eta)=Nf(a)$ to the phase function in the exponent and conclude, by Lemma~\ref{lem:tauintr}, that
\begin{equation}
\label{eq:aintr}
\int_{N^\rho}^{+\infty} \int_{[0,1]\setminus I}^1 \left| \frac{e^{-Ng(a,\tau,\eta)}}{a^{\alpha-1} \tau^{\gamma+1/2}}  \right| \, \diff \tau \diff a \lesssim  \int_{N^\rho}^{+\infty} \frac{\left|e^{-Ng(a,1,\eta)}\right|\cdot F(a)}{a^{\alpha-1}} \, \diff a,
\end{equation}
with $F(a)$ defined in~\eqref{eq:impbtau2r}. In the following of the proof we often use that $NE\xi_*\lesssim 1$ by the definition of $\xi_*$ in~\eqref{eq:newstatpoi}, that implies $e^{NE\xi_*}\lesssim 1$. We split the computation of the integral in the r.h.s.~of~\eqref{eq:aintr} as the sum of the integrals over $[N^\rho,\xi_*]$ and $[\xi_*,+\infty)$. From now on we consider only the case $\delta> N^{-1/2}$, since the case $\delta\le N^{-1/2}$ is completely analogous. In the regime $\delta> N^{-1/2}$ we have $E\lesssim c(N)=\delta^{-1}N^{-2}\lesssim \delta^3$, therefore $\xi_*\sim\sqrt{\delta E^{-1}}$ from~\eqref{eq:newstatpoi}.

Then, using the expansion for large $a$-argument of $g(a,1,\eta)=f(a)$ in~\eqref{expfxi1AAr}, we start estimating the integral over $[N^\rho, +\infty)$ as follows
\begin{equation}
\label{eq:lema4r}
\begin{split}
& e^{NE\xi_*}\int_{N^\rho}^{\xi_*}  \frac{e^{-N\left[ \frac{\delta}{a}+\frac{1}{2a^2}\right]}F(a)}{a^{\alpha-1}} \, \diff a \\
&\lesssim
\chi(\delta\eta^{-2}\le \xi_*) e^{-\frac{1}{2}N\eta^2}(N\eta^2)^{1/2-\gamma}\times \begin{cases}
1+|\log(N\delta \xi_*^{-1})| ,& \alpha=2, \\
(\delta^{-1}\eta^2)^{\alpha-2}, & \alpha\ge 3.
\end{cases} \\
&\qquad\quad + e^{-\frac{1}{2}N\eta^2} \times
\begin{cases}
N^{1/2-\gamma}\delta^{2-\alpha} \eta^{2\alpha-2\gamma-3}, & \gamma=\alpha, \alpha-1, \delta\eta^{-2}\le \xi_* \\
(N\delta)^{1/2-\gamma}\xi_*^{3/2+\gamma-\alpha}, & \gamma=\alpha, \alpha-1, \delta\eta^{-2}\ge \xi_* \\
(N\delta)^{2-\alpha} ,& \gamma\le \alpha-2,
\end{cases} \\
&\qquad \quad +\begin{cases}
1+|\log(N\delta\xi_*^{-1})| ,& \alpha=2, \, [\delta\eta^{-1}\vee 1]\eta^{-1}> \xi_*, \\
((N\eta^2)^{-1}\wedge 1)(1+|\log(N\delta \xi_*^{-1})|) ,& \alpha=2,\, [\delta\eta^{-1}\vee 1]\eta^{-1}\le \xi_*, \\
(N\delta)^{2-\alpha}, & \alpha \ge 3 \, [\delta\eta^{-1}\vee 1]\eta^{-1}> \xi_*, \\
((N\eta^2)^{-1}\wedge 1) (N\delta)^{2-\alpha} & \alpha \ge 3 \, [\delta\eta^{-1}\vee 1]\eta^{-1}\le \xi_*,
\end{cases}
\end{split}
\end{equation}
To conclude the proof we are left with the estimate of the $a$-integral on $[\xi_*,+\infty)$ . In this regime we bound the r.h.s.~of~\eqref{eq:aintr} as follows
\begin{equation}
\label{eq:lema6r}
\begin{split}
&e^{NE\xi_*}\int_{\xi_*}^{+\infty} \frac{e^{-NE a}F(a)}{a^{\alpha-1}}\diff a \\
&\lesssim
e^{-\frac{1}{2}N\eta^2} (N\eta^2)^{1/2-\gamma}\times
\begin{cases}
\left(1+|\log(NE\xi_*)|\right) e^{-NE\delta\eta^{-2}/2}, & \alpha=2, \\
e^{-NE\delta\eta^{-2}} (\delta^{-1}\eta^2)^{\alpha-2}, & \alpha\ge 3, \, \delta^{-1}\eta^2\le NE\wedge \xi_*^{-1},\\
(\delta^{-1}\eta^2)^{\alpha-2}, &  \alpha\ge 3, \, NE\le \delta^{-1}\eta^2\le \xi_*^{-1}, \\
\xi_*^{2-\alpha}, & \alpha\ge 3, \, \delta^{-1}\eta^2\ge \xi_*^{-1},
\end{cases} \\
&\quad +
\chi(\xi_*\le\delta\eta^{-2}) e^{-\frac{1}{2}N\eta^2}(N\delta)^{1/2-\gamma}\times
\begin{cases}
(NE)^{\alpha-\gamma-3/2}, &\gamma=\alpha,\alpha-1, \,  \delta^{-1}\eta^2\le NE, \\
(\delta^{-1}\eta^2)^{\alpha-\gamma-3/2}, & \gamma=\alpha,\alpha-1, \, \delta^{-1}\eta^2\ge NE, \\
\xi_*^{3/2+\gamma-\alpha}, & \gamma\le \alpha-2,
\end{cases} \\
&\quad  + 
\begin{cases}
1+|\log(N\delta\xi_*^{-1})| ,& \alpha=2, \, [\delta\eta^{-1}\vee 1]\eta^{-1}> \xi_*, \\
((N\eta^2)^{-1}\wedge 1)(1+|\log(N\delta \xi_*^{-1})|) ,& \alpha=2,\, [\delta\eta^{-1}\vee 1]\eta^{-1}\le \xi_*, \\
(\xi_*)^{2-\alpha}, &\alpha \ge 3\, [\delta\eta^{-1}\vee 1]\eta^{-1}> \xi_*, \\
((N\eta^2)^{-1}\wedge 1) (\xi_*)^{2-\alpha}, &\alpha \ge 3\, [\delta\eta^{-1}\vee 1]\eta^{-1}> \xi_*.
\end{cases}
\end{split}
\end{equation}
Finally, combining~\eqref{eq:lema4r} and~\eqref{eq:lema6r} we conclude the bound in~\eqref{eq:boddoubintr}.
\end{proof}

In order to conclude the estimate of the leading order term of~\eqref{eq:firstchr}, in the following lemma, using the bounds in Lemma~\ref{lem:apriorbl} for the $\xi$-integral and the ones in Lemma~\ref{lem:doubleatauintr} for the $(a,\tau)$-integral, we prove that the contribution to~\eqref{eq:firstchr} in the regime when either $a\in [0,N^\rho]$ or $\xi\in\widetilde{\Gamma}$ and in the regime $\tau\in I$ is exponentially small.

\begin{lemma}
\label{lem:expsmallrealr}
Let $c(N)$ be defined in~\eqref{eq:thres}, $0\le \delta \le 1$, $I=I_a=[0,N^{\rho/2}a^{-1}]$,
and let $f$, $g$ and $G_N$ be defined in~\eqref{eq:fffr}--\eqref{eq:newbetG}, then, for any $E\lesssim c(N)$, we have that
\begin{equation}
\label{eq:expsmalltripintr}
\begin{split}
&\left|\left(\int_\Gamma\, \diff \xi\int_\Lambda\, \diff a \int_0^1\diff \tau-\int_{\Gamma\setminus\widetilde{\Gamma}}\, \diff \xi\int_{\Lambda\setminus\widetilde{\Lambda}}\, \diff a \int_{[0,1]\setminus I}\diff \tau\right)\left[ e^{N[f(\xi)-g(a,\tau,\eta)]} \frac{a\xi^2}{\tau ^{1/2}} G_N(a,\tau,\xi,z)\right]\right| \\
&\qquad \lesssim N^{5/2+\rho}(N^{1/2}+N\delta) e^{-\frac{1}{2}N^{1-2\rho}}\left(\frac{e^{-N\eta^2/2}}{\sqrt{E}}+(N^{1/2}+N\delta)[1+|\log(NE^{2/3})|]\right).
\end{split}
\end{equation}
\begin{proof}

We split the proof into three parts, we first prove that the contribution to~\eqref{eq:firstchr} in the regime $a\in\widetilde{\Lambda}=[0,N^\rho]$ is exponentially small uniformly in $\tau\in [0,1]$ and $\xi\in \Gamma$, then we prove that for $a \ge N^\rho$ the contribution to~\eqref{eq:firstchr} in the regime $\tau\in I$ is exponentially small uniformly in $\xi\in \Gamma$, and finally we conclude that also the contribution for $\xi\in\widetilde{\Gamma}$ is negligible.

Note that for any $a\in [0,+\infty)$, $\tau\in [0,1]$ we have that the map $\tau\mapsto g(a,\tau,0)$ is strictly decreasing by \ref{it:taudecr} of Lemma~\ref{lem:propfgrealr}, hence, using that $g(a,\tau,\eta)\ge g(a,\tau,0)$ and \ref{it:incfrealr}-\ref{it:decr} of Lemma~\ref{lem:propfgrealr}, it follows that
\begin{equation}
\label{eq:fgexpsma}
\sup_{\xi\in \widetilde{\Gamma}} |e^{Nf(\xi)}|+\sup_{a\in \widetilde{\Lambda}}| e^{-Ng(a,\tau,\eta)}|\le \sup_{\xi\in \widetilde{\Gamma}} |e^{Nf(\xi)}|+\sup_{a\in \widetilde{\Lambda}}| e^{-Ng(a,1,0)}|\lesssim e^{-Nf(N^\rho)},
\end{equation}
with
\begin{equation}
\label{eq:ovovag}
f(N^\rho)=\frac{\delta}{N^\rho}+\frac{1}{2N^{2\rho}}+\mathcal{O}(N^{-3\rho}+\delta N^{-2\rho}).
\end{equation}
In order to estimate the regime $a\in\widetilde{\Lambda}$, we split the computation into two cases: $(a,\xi)\in\widetilde{\Lambda}\times\widetilde{\Gamma}$ and $(a,\xi)\in\widetilde{\Lambda}\times(\Gamma\setminus\widetilde{\Gamma})$. Then, by~\eqref{eq:fgexpsma}--\eqref{eq:ovovag} it follows that the integral in the regime $(a,\tau,\xi)\in \widetilde{\Lambda}\times [0,1]\times \widetilde{\Gamma}$ is bounded by $N^{10\rho}e^{-N^{1-2\rho}/2}$. Note that in the regime $(a,\tau,\xi)\in \widetilde{\Lambda}\times [0,1]\times (\Gamma\setminus\widetilde{\Gamma})$ we have $|\xi|\ge N^\rho$. Hence, by the explicit form of $G_{1,N}, G_{2,N}$ in \eqref{eq:newbetG}, using the bound in \eqref{eq:fgexpsma} for $e^{-Ng(a,\tau,\eta)}$, that $|\xi|\ge N^\rho$ and so Lemma~\ref{lem:apriorbl} to bound the regime $\Gamma\setminus\widetilde{\Gamma}$, we conclude that the integral over $(a,\tau,\xi)\in \widetilde{\Lambda}\times [0,1]\times (\Gamma\setminus\widetilde{\Gamma})$ is bounded by $N^{2+10\rho}(N^{1/2}+(N\delta))e^{-N^{1-2\rho}/2}$.

Next, we consider the integral over $(a,\tau,\xi)\in (\Lambda\setminus \widetilde{\Lambda})\times [0,N^{\rho/2} a^{-1}]\times \Gamma$. Note that in this regime $a\ge N^\rho$. Since $g(a,\tau,\eta)\ge g(a,\tau,0)$ and $\tau\mapsto g(a,\tau,0)$ is strictly decreasing by \ref{it:taudecr} of Lemma~\ref{lem:propfgrealr}, we have that
\begin{equation}
\label{eq:hoplaes}
e^{-Ng(a,\tau,\eta)}\le e^{-Ng(a,N^{\rho/2}a^{-1},0)},
\end{equation}
where
\begin{equation}
\label{eq:ovtau}
g(a,N^{\rho/2}a^{-1},\eta)=Ea+\frac{\delta}{N^{\rho/2}}+\frac{1}{N^\rho}+\mathcal{O}(N^{-3\rho/2}+\delta N^{-\rho}).
\end{equation}
Additionally, using the explicit expression of $G_N$ in \eqref{eq:newbetG}, the bound \eqref{eq:fgexpsma} on $e^{Nf(\xi)}$ for the regime $\xi\in \widetilde{\Gamma}$, and Lemma~\ref{lem:apriorbl} for $\xi\in \Gamma\setminus \widetilde{\Gamma}$, we get
\begin{equation}
\label{eq:bassus}
\left| \int_\Gamma \diff \xi G_N(a,\tau,\xi,z)\xi^2 e^{Nf(\xi)}\right|\lesssim C(N,\eta,a,\tau),
\end{equation}
where
\[
C(a,\tau)=C(N,\eta,a,\tau)\defeq(N^{1/2}+N\delta)\left(N^2\eta^2+N^2\tau+\frac{N^2}{a}+\frac{N^2}{a^2\tau}\right).
 \]
Thus, using \eqref{eq:bassus} and the explicit form of $g(a,\tau,\eta)$ in \eqref{bosonphfAr}, for $\tau\in [0,N^{\rho/2} a^{-1}]$ we have
\begin{equation}
\label{eq:trick}
\begin{split}
\left| \frac{a}{\tau^{1/2}}e^{-Ng(a,\tau,\eta)}\int_\Gamma \diff \xi  G_N(a,\tau,\xi,z)\xi^2e^{Nf(\xi)}\right|&\lesssim C(a,\tau)e^{-(N-2)g(a,\tau,\eta)}\frac{a^3\tau}{\tau^{1/2}(1+2a+a^2\tau)} \\
&\quad \lesssim C(a,\tau)a^2\tau^{1/2} e^{-(N-2)g(a,\tau,0)} e^{-\frac{(N-2)\eta^2a^2}{1+2a+a^2\tau}} \\
&\quad \lesssim C(a,\tau)a^2\tau^{1/2} e^{-\frac{1}{2}N^{1-2\rho}} e^{-(N-2)Ea-\frac{(N-2)\eta^2a^2}{1+2a+a^2\tau}}.
\end{split}
\end{equation}
 
Hence, integrating~\eqref{eq:trick} with respect to $(a,\tau)$, and using that in the regime $\tau\in [0,N^{\rho/2}a^{-1}]$ it holds
\[
N^{-\rho/2}a\lesssim\frac{a^2}{1+2a+a^2\tau}\lesssim a,
\]
we conclude that  the integral over $(a,\tau,\xi)\in (\Lambda\setminus \widetilde{\Lambda})\times [0,N^{\rho/2} a^{-1}]\times\Gamma$ is bounded by $N^{5/2+5\rho} (N^{1/2}+N\delta) E^{-1/2} e^{-N^{1-2\rho}}e^{-N\eta^2/2}$.

Finally, in order to conclude the bound in~\eqref{eq:expsmalltripintr}, we are left with the estimate of the integral over $(a,\tau,\xi)\in (\Lambda\setminus \widetilde{\Lambda})\times [N^{\rho/2} a^{-1},1]\times\widetilde{\Gamma}$. In this regime, using the bound in \eqref{eq:fgexpsma}  on $e^{Nf(\xi)}$ for $\xi\in\widetilde{\Gamma}$, and Lemma~\ref{lem:doubleatauintr} to estimate the integral over $(a,\tau)\in (\Lambda\setminus \widetilde{\Lambda})\times [N^{\rho/2} a^{-1},1]$, we get the bound $N^{3/2+10\rho}  e^{-N^{1-2\rho}/2}(E^{-1/2}e^{-N\eta^2/2}+(N^{1/2}\vee(N\delta))(1+|\log|NE^{2/3}|))$. This concludes the proof of~\eqref{eq:expsmalltripintr}.
\end{proof}
\end{lemma}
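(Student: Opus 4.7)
The plan is to split the complementary region $\bigl(\Gamma\times\Lambda\times[0,1]\bigr)\setminus\bigl((\Gamma\setminus\widetilde\Gamma)\times(\Lambda\setminus\widetilde\Lambda)\times([0,1]\setminus I)\bigr)$ into three pieces according to which coordinate is forced into the small set: (i) $a\in\widetilde\Lambda$ with $(\tau,\xi)$ arbitrary; (ii) $\xi\in\widetilde\Gamma$ with $a\in\Lambda\setminus\widetilde\Lambda$ and $\tau$ arbitrary; and (iii) $\tau\in I$ with $(\xi,a)\in(\Gamma\setminus\widetilde\Gamma)\times(\Lambda\setminus\widetilde\Lambda)$. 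In each case the exponential decay will be extracted from a single one of the three exponential factors, and the remaining two-dimensional integral will be handled either trivially or via the previously established bounds in Lemmas~\ref{lem:apriorbl} and~\ref{lem:doubleatauintr}.

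For piece (i) I would combine the monotonicity properties of Lemma~\ref{lem:propfgrealr}---namely $g(a,\tau,\eta)\ge g(a,\tau,0)$, the decrease of $\tau\mapsto g(a,\tau,0)$ on $[0,1]$, and the decrease of $a\mapsto g(a,1,0)=f(a)$ on $[0,\xi_\ast/2]$---to deduce $\sup_{a\in\widetilde\Lambda,\,\tau\in[0,1]}\abs{e^{-Ng(a,\tau,\eta)}}\lesssim e^{-Nf(N^\rho)}$. The expansion in~\eqref{expfxi1AAr} gives $f(N^\rho)=\frac{1}{2}N^{-2\rho}\bigl(1+\landauO{N^{-\rho}+\delta}\bigr)$, yielding the decay factor $e^{-\frac{1}{2}N^{1-2\rho}}$. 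The remaining $\xi$-integral over $\Gamma$ is then split as $\widetilde\Gamma\cup(\Gamma\setminus\widetilde\Gamma)$: on $\widetilde\Gamma$ one uses the analogous exponential bound for $\abs{e^{Nf(\xi)}}$ coming from part~\ref{it:incfrealr} of Lemma~\ref{lem:propfgrealr}, and on $\Gamma\setminus\widetilde\Gamma$ one inserts the explicit polynomial structure of $G_N$ from~\eqref{eq:newbetG} and estimates each monomial by Lemma~\ref{lem:apriorbl}. Piece (ii) is essentially symmetric: Lemma~\ref{lem:propfgrealr}\ref{it:incfrealr} provides $\sup_{\xi\in\widetilde\Gamma}\abs{e^{Nf(\xi)}}\lesssim e^{-Nf(N^\rho)}$, while the surviving $(a,\tau)$-integral with polynomial weights falls directly into the scope of Lemma~\ref{lem:doubleatauintr}.

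The principal obstacle is piece (iii), the small-$\tau$ regime, where neither $a$ nor $\xi$ is restricted and the entire decay must be extracted from $\tau$ alone, even though $G_N$ carries negative powers of $\tau$ up to $\tau^{-3}$ and the integrand already contains an explicit factor $\tau^{-1/2}$. The approach is to use $g(a,\tau,\eta)\ge g(a,\tau,0)$ together with the monotonicity of $\tau\mapsto g(a,\tau,0)$ to bound $\abs{e^{-Ng(a,\tau,\eta)}}\le e^{-Ng(a,N^{\rho/2}/a,0)}$; a direct substitution into~\eqref{bosonphfAr} yields $g(a,N^{\rho/2}/a,0)=Ea+N^{-\rho}\bigl(1+\landauO{N^{-\rho/2}+\delta}\bigr)$, producing both the desired decay $e^{-N^{1-\rho}}\le e^{-\frac{1}{2}N^{1-2\rho}}$ and an $e^{-NEa}$ factor that keeps the $a$-integral convergent at infinity. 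What remains is then a careful bookkeeping: the $\xi$-integral over $\Gamma$ is handled by Lemma~\ref{lem:apriorbl} (contributing the $N^{1/2}+N\delta$ factor from~\eqref{eq:exactyint}); the negative powers of $\tau$ in $G_N$ are absorbed into the $a^2\tau/(1+2a+a^2\tau)$ structure coming from $\sdet(1+Q)$, which after the exponential bound is applied remains uniformly bounded by $a\tau^{1/2}N^{\rho/2}$-type factors on $I$; finally, the remaining $a$-integral $\int_{N^\rho}^{\infty} a^k e^{-NEa}\,da$ at the relevant $k$ produces the loss $E^{-1/2}$ up to a small power of $N^\rho$. Gathering the polynomial losses and the exponential gain yields the claimed estimate.
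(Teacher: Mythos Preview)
Your proposal follows essentially the same strategy as the paper: the same three-fold case split, the same monotonicity inputs from Lemma~\ref{lem:propfgrealr}, the same use of Lemma~\ref{lem:apriorbl} for the $\xi$-integral and Lemma~\ref{lem:doubleatauintr} for the $(a,\tau)$-integral, and the same mechanism in the small-$\tau$ regime of borrowing a factor $e^{-2g}\sim a^2\tau/(1+2a+a^2\tau)$ from the phase to cancel the $\tau^{-\gamma-1/2}$ singularity of $G_N$ (this factor comes from $\sdet Q$, not $\sdet(1+Q)$, but you clearly have the right object in mind).

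There is one genuine bookkeeping gap. In your piece~(ii) you take $\tau$ \emph{arbitrary} in $[0,1]$ and claim the resulting $(a,\tau)$-integral ``falls directly into the scope of Lemma~\ref{lem:doubleatauintr}''. It does not: that lemma is stated and proved only for $\tau\in[0,1]\setminus I$, and its proof (via Lemma~\ref{lem:tauintr}) uses the expansion~\eqref{eq:explargataur} which requires $a\tau\ge N^{\rho/2}$. The missing sliver is $\xi\in\widetilde\Gamma$, $a\in\Lambda\setminus\widetilde\Lambda$, $\tau\in I$, which is covered neither by your (ii) as argued nor by your (iii) (where you restrict to $\xi\in\Gamma\setminus\widetilde\Gamma$). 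The paper avoids this by ordering the cases differently: it treats the small-$\tau$ region with $\xi$ ranging over \emph{all} of $\Gamma$ (splitting the $\xi$-integral into $\widetilde\Gamma$ and $\Gamma\setminus\widetilde\Gamma$ inside the bound~\eqref{eq:bassus}), and only afterwards treats $\xi\in\widetilde\Gamma$ with $\tau$ already restricted to $[0,1]\setminus I$. You should either adopt that ordering, or note explicitly that on the missing sliver both the $\xi\in\widetilde\Gamma$ and $\tau\in I$ mechanisms give exponential decay, so the region is harmless.
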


\begin{proof}[Proof of Theorem~\ref{thm real} in the case \(\delta\ge 0\)]
Using Lemma~\ref{lem:expsmallrealr} we remove the regime $a\le N^\rho$, $|\xi|\le N^\rho$ or $\tau\in [0, N^{\rho/2}a^{-1}]$ in \eqref{eq:firstchr}. Then, using the expansion for $G_{1,N}$ and $G_{2,N}$ in \eqref{expfxi2AAr}-\eqref{newexpansionA1Ar} in the remaining regime of \eqref{eq:firstchr}, combining Lemma~\ref{lem:apriorbl} and Lemma~\ref{lem:doubleatauintr} we conclude Theorem~\ref{thm real}.
\end{proof}

\subsection{Case $-CN^{-1/2}\le \delta<0$.}
Now we summarize the necessary changes for the case \(\delta<0\). Similarly to the case $0\le \delta\le 1$, all along this section we assume that $E'<0$ in~\eqref{eq:fffr}--\eqref{bosonphfAr}, i.e.~$E'=-E$ with $0\le E\lesssim N^{-3/2}$.

Let $x_*$ be the real stationary point of $f$, i.e.~$x_*$ at leading order is given by 
\[
x_*\approx \begin{cases}
3|\delta|^{-1}/2 &\text{if}\quad E\ll |\delta|^3, \\
\mu(c)E^{-1/3} &\text{if}\quad E=c|\delta|^3, \\
E^{-1/3} &\text{if}\quad E\gg |\delta|^3,
\end{cases}
\]
for some function $\mu(c)>0$ and any fixed constant $c>0$ independent of $N$, $E$, and $\delta$. As in the complex case, we can treat the regime $0< -\delta \lesssim N^{-1/2}$ similarly to the regime $0\le \delta\lesssim  N^{-1/2}$, since for $|\delta|\lesssim N^{-1/2}$ the only $\delta$-dependent terms, i.e.~the term $\delta a^{-1}$ in the expansion of $g(a,1,\eta)=f(a)$ in~\eqref{expfxi1AAr} and the term $(1-\tau)\delta (a\tau)^{-1}$ in the expansion of $g(a,\tau,\eta)-g(a,1,\eta)$, do not play any role in the estimates of the $(a,\tau)$ integral in the regime $a\ge N^\rho$, $\tau\in [N^{\rho/2}a^{-1},1]$. Note that this is also the case for $0\le \delta\le N^{-1/2}$, when the estimates~\eqref{eq:impbtau2r} and~\eqref{eq:boddoubintr} were derived. For this reason, unlike the case $0\le \delta\le 1$, in the present case, $\delta<0$, $|\delta|\lesssim N^{-1/2}$ and $E\lesssim c(N)$, we do not deform the $\xi$-contour through the leading order of the saddle $x_*$, but we always deform it as $\Gamma=\Gamma_{\xi_*}\defeq\Gamma_{1,\xi_*}\cup \Gamma_{2,\xi_*}$, where $\xi_*\defeq E^{-1/3}$, with $\Gamma_{1,\xi_*}, \Gamma_{2,\xi_*}$ defined in~\eqref{eq:defycon} replacing $z_*$ by $\xi_*$. Note that we could have done the same choice in the case $0\le \delta\lesssim N^{-1/2}$, but not for the regime $N^{-1/2}\ll \delta\le 1$, hence, in order to treat the regime $0\le \delta\le 1$ in the same way for any $\delta$, in Section~\ref{sec:posredelta} we deformed the contour $\Gamma$ through~\eqref{eq:newstatpoi}. For any $E\lesssim c(N)$ and $0< -\delta \lesssim N^{-1/2}$ we have $\xi_* \gg 1$, hence we prove that the main contribution to~\eqref{eq:firstchr} comes from the regime when $a, |\xi|\ge N^{\rho}$. Moreover, similarly to the case $0\le \delta\le 1$, the contribution to~\eqref{eq:firstchr} in the regime $(a,\tau,\xi)\in (\Lambda\setminus \widetilde{\Lambda})\times [0, N^{\rho/2}a^{-1}]\times (\Gamma\setminus \widetilde{\Gamma})$ will be exponentially small. Hence, in order to estimate the leading order of~\eqref{eq:firstchr}, we expand $f$, $g(\cdot,1,\eta)$ and $G_N$ for large $a$ and $|\xi|$ arguments as in~\eqref{expfxi1AAr}--\eqref{newexpansionA1Ar}.

The phase functions $f$ and $g$, defined in~\eqref{eq:fffr} and~\eqref{bosonphfAr}, respectively, satisfy the properties \ref{it:reimfrealr} and \ref{it:incf} of Lemma~\ref{lem:propfgrealr}, but not the ones in \ref{it:taudecr} and \ref{it:decr}. Instead, it is easy to see that the following lemma holds true.

\begin{lemma}
\label{lem:negrealdeltafg}
Let $f$ and $g$ be the phase functions defined in~\eqref{eq:fffr} and~\eqref{bosonphfAr}, respectively. Then, the following properties hold true:

\begin{enumerate}[(i')]
\setcounter{enumi}{2}
\item \label{it:fin2} For any $a\in[0,+\infty)$, we have that $g(a,\tau,\eta)\ge g(a,\tau,0)$ and that
\[
e^{-Ng(a,\tau,0)}\lesssim e^{-Ng(a,\tau_0,0)},
\]
for any fixed $\tau_0\in [0,1]$ and any $\tau\in [0,\tau_0]$.

\item \label{it:fin1} The function $a\mapsto g(a,1, 0)$ is strictly decreasing on $[0, |\delta|^{-1}/2]$.

\end{enumerate}
\end{lemma}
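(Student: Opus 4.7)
For the first claim of (iii'), a direct subtraction using~\eqref{bosonphfAr} yields the identity
\[ g(a,\tau,\eta) - g(a,\tau,0) = \frac{2\eta^2 a^2 (1-\tau)}{1+2a+a^2\tau}, \]
which is manifestly $\ge 0$ for $a\ge 0$ and $\tau\in[0,1]$.

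For the exponential inequality in (iii'), the plan is to decompose
\[ g(a,\tau,0) = g_0(a,\tau) + \delta\cdot\frac{1+a}{1+2a+a^2\tau}, \]
where $g_0$ denotes the $\delta=0$ specialization (note $|z|^2=1-\delta$). Property~(iii) of Lemma~\ref{lem:propfgrealr}, applied at $\delta=0$, gives that $\tau\mapsto g_0(a,\tau)$ is strictly decreasing on $(0,1]$, hence $g_0(a,\tau)\ge g_0(a,\tau_0)$ for $\tau\le\tau_0$. The perturbation obeys $|\delta\cdot(1+a)/(1+2a+a^2\tau)|\le |\delta|$ uniformly in $(a,\tau)$, so
\[ g(a,\tau,0) \ge g(a,\tau_0,0) - 2|\delta|. \]
Since $|\delta|\lesssim N^{-1/2}$ by hypothesis, the resulting multiplicative factor $e^{2N|\delta|}$ is subexponential and is absorbed into the implicit constant in $\lesssim$. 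Alternatively, one may compute $\partial_\tau g(a,\tau,0)$ explicitly and observe that its unique zero $\tau_c(a) = (1+2a)^2/[a^2(1+2|\delta|(1+a))]$ satisfies $\tau_c(a)\ge 1$ precisely when $a\lesssim 1/|\delta|$; on this range $g$ is strictly decreasing in $\tau$ as in the $\delta=0$ case, and on the complementary range one controls the non-monotonic excursion $g(a,\tau_0,0)-g(a,\tau_c,0)$ by the same $O(|\delta|)$ bound.

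For (iv'), a direct differentiation of $g(a,1,0) = Ea + \log(1+a) - \log a - (1-\delta)/(1+a)$ gives
\[ \partial_a g(a,1,0) = E + \frac{1}{1+a} - \frac{1}{a} + \frac{1-\delta}{(1+a)^2} = E - \frac{1+\delta a}{a(1+a)^2}. \]
On $a\in(0,|\delta|^{-1}/2]$ and $\delta\le 0$, the numerator $1+\delta a = 1 - |\delta|a$ lies in $[1/2,1]$ and is in particular positive. A short check (by monotonicity in $a$) shows that the ratio $(1+\delta a)/[a(1+a)^2]$ is strictly decreasing in $a$ on this interval, attaining its minimum $\sim 4|\delta|^3$ at the right endpoint $a = |\delta|^{-1}/2$. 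Since $E\lesssim c(N) = N^{-3/2}$ while $|\delta|^3\gtrsim N^{-3/2}$ at the critical boundary $|\delta|\sim N^{-1/2}$, the strict inequality $E < (1+\delta a)/[a(1+a)^2]$ persists throughout the interval, so $\partial_a g(a,1,0)<0$ there.

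The main technical subtlety is the non-monotonicity in (iii') in the sub-regime $a\gtrsim 1/|\delta|$: here one cannot appeal to monotonicity of $\tau\mapsto g(a,\tau,0)$ directly, and the argument relies crucially on the smallness $|\delta|\lesssim N^{-1/2}$ to keep the $\delta$-perturbation controlled. The sub-exponential slack factor $e^{O(N|\delta|)}$ surviving from this estimate is the price paid for $\delta<0$; it is absorbed downstream by the much stronger decay factor $e^{-Nf(N^\rho)} = e^{-N^{1-2\rho}/2 + o(1)}$ appearing in the analogues of Lemma~\ref{lem:expsmallrealr}.
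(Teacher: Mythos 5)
The paper gives no proof of this lemma (it invokes ``it is easy to see''), so there is nothing to compare methods against; I can only check your argument on its own terms. The first claim of (iii') — the identity
\[ g(a,\tau,\eta)-g(a,\tau,0)=\frac{2\eta^2a^2(1-\tau)}{1+2a+a^2\tau} \]
and its nonnegativity — is correct, as is your formula \(\partial_a g(a,1,0)=E-\frac{1+\delta a}{a(1+a)^2}\) and the monotonicity of \(h(a):=\frac{1-|\delta|a}{a(1+a)^2}\) on \((0,|\delta|^{-1}/2]\). The rest, however, has two genuine gaps.

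\textbf{The exponential inequality in (iii') is not established by an \(O(|\delta|)\) bound.} Both your decomposition \(g(\cdot,\cdot,0)=g_0+\delta\cdot\frac{1+a}{D}\) and your alternative critical-point argument control the non-monotone excursion only by \(O(|\delta|)\), giving a multiplicative surplus of \(e^{O(N|\delta|)}\). With \(|\delta|\sim N^{-1/2}\) this is \(e^{cN^{1/2}}\), which is \emph{not} an \(O(1)\) factor and cannot be ``absorbed into the implicit constant in \(\lesssim\)'' — the statement you are proving would simply be false if the excursion were really of size \(|\delta|\). The lemma is in fact salvageable, but the mechanism is a cancellation you are not exploiting: writing \(\phi(\tau)=g(a,\tau,0)\), the contribution to \(\phi(\tau_0)-\phi(\tau_c)\) from the logarithmic part of \(g_0\) (i.e.\ \(\tfrac12\log\frac{D_{\tau_0}\tau_c}{D_{\tau_c}\tau_0}\)) and from the rational part \((1+a)[D_{\tau_c}^{-1}-D_{\tau_0}^{-1}]\) each contribute \(O(|\delta|)\) terms of opposite sign, cancelling to leave an excursion of size \(O(|\delta|^2)=O(N^{-1})\). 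Only after this cancellation is the multiplicative factor \(e^{O(N|\delta|^2)}=O(1)\). Your argument, as written, does not see this cancellation and therefore does not prove the stated \(\lesssim\). (Your observation that the slack would be swamped downstream by \(e^{-Nf(N^\rho)}\) is a comment on how the lemma is \emph{used}, not a proof of the lemma.)

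\textbf{The argument for (iv') is logically backwards.} You correctly locate the minimum of \(h\) at the right endpoint \(a=|\delta|^{-1}/2\), of size \(\approx 4|\delta|^3\), so strict decrease on the whole of \([0,|\delta|^{-1}/2]\) is equivalent to \(E<4|\delta|^3\) (up to constants). You then deduce this from \(E\lesssim N^{-3/2}\) and ``\(|\delta|^3\gtrsim N^{-3/2}\) at the critical boundary \(|\delta|\sim N^{-1/2}\).'' But the hypothesis is only an \emph{upper} bound \(|\delta|\le CN^{-1/2}\): the boundary \(|\delta|\sim N^{-1/2}\) is where \(|\delta|^3\) is \emph{largest} and the claim is easiest, while for \(|\delta|\ll N^{-1/2}\) (e.g.\ \(|\delta|=N^{-2/3}\), still admissible) one has \(|\delta|^3\ll N^{-3/2}\) and the required inequality \(E<4|\delta|^3\) can fail for \(E\sim N^{-3/2}\). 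So your reasoning only covers the least problematic case and does not establish (iv') on the stated interval. (What the paper actually needs downstream is decrease on \([0,N^\rho]\), where \(h(N^\rho)\sim N^{-3\rho}\gg N^{-3/2}\gtrsim E\) always holds; the interval \([0,|\delta|^{-1}/2]\) in the lemma statement is larger than supportable for very small \(|\delta|\), and in any case your argument needs to address that regime rather than dismiss it.)
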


Since $|\delta|\lesssim N^{-1/2}$, by \ref{it:fin1} of Lemma \ref{lem:negrealdeltafg} it clearly follows that the function $a\mapsto g(a,1,\eta)$ is strictly decreasing on $[0,N^\rho]$. Note that for $|\delta|\lesssim N^{-1/2}$ we have
\begin{equation}
\label{eq:convboundnegdel}
e^{-N(1-\tau)\left[\frac{1}{a^2\tau^2}+\frac{\delta}{a\tau} \right]}\lesssim e^{-\frac{(1-\tau)N}{2a^2\tau^2}}, \quad e^{-N[\frac{\delta}{a}+\frac{1}{2a^2}]}\lesssim e^{-\frac{N}{4a^2}},
\end{equation}
for any $a\in [N^\rho,+\infty)$ and $\tau\in [N^{\rho/2}a^{-1},1]$. Using~\eqref{eq:convboundnegdel}, inspecting the proof of~\eqref{eq:impbtau2r},~\eqref{eq:boddoubintr} in Lemma~\ref{lem:tauintr} and Lemma~\ref{lem:doubleatauintr}, respectively, and noticing that in the regime $0\le \delta \le N^{-1/2}$ the sign of $\delta$ did not play any role we conclude that the bounds ~\eqref{eq:impbtau2r},~\eqref{eq:boddoubintr} hold true for the case $\delta<0$, $|\delta|\lesssim N^{-1/2}$ as well. Then, similarly to the case $0\le \delta\le 1$, by \ref{it:reimfrealr}--\ref{it:incf} of Lemma~\ref{lem:propfgrealr} and \ref{it:fin2}--\ref{it:fin1} of Lemma~\ref{lem:negrealdeltafg}, using the bound in~\eqref{eq:boddoubintr} to estimate the $(a,\tau)$-integral in the regime $(a,\tau)\in [N^\rho,+\infty)\times [N^{\rho/2}a^{-1},1]$ and the ones in Lemma~\ref{lem:apriorbl} to estimate the $\xi$-integral in the regime $|\xi|\ge N^\rho$ we conclude that the contribution to~\eqref{eq:firstchr} in the regime when either $a\in [0,N^\rho]$ or $|\xi|\le N^\rho$ and in the regime $[0,N^{\rho/2}a^{-1}]$ is exponentially small, i.e.~Lemma~\ref{lem:expsmallrealr} holds true once $\delta$ is replaced by $|\delta|$ everywhere.
\begin{proof}[Proof of Theorem~\ref{thm real} in the case \(-C N^{-1/2}\le \delta<0\)]
By combining~\eqref{eq:boddoubintr}, Lemma~\ref{lem:apriorbl} and Lemma~\ref{lem:expsmallrealr}, using the expansion of $G_N$ in~\eqref{expfxi2AAr}--\eqref{newexpansionA1Ar}, we conclude the proof of Theorem \ref{thm real} also in the case \(-C N^{-1/2}\le \delta<0\).
\end{proof}

\appendix

\section{Superbosonisation formula for meromorphic functions}\label{app:reg}
The superbosonisation formulas~\cite[Eq.~(1.10) and (1.13)]{MR2430637}
(see also~\cite[Corollary 2.6]{MR3264247} for more precise conditions)
 are stated under the condition that 
  $$
   F( \Phi^*\Phi) = F\begin{pmatrix} \langle s, s\rangle &  \langle s, \chi \rangle \cr 
   \langle \chi, s\rangle &  \langle \chi, \chi \rangle \end{pmatrix}, 
 $$
    viewed as a function of  four independent variables, is holomorphic and decays faster than any 
inverse power at real $+\infty$ in the $\langle s, s\rangle$ variable (for definiteness, we discuss the complex case;
the argument for the real case is analogous). Our function $F$ defined in~\eqref{susyused} 
has a pole at $\langle s, s\rangle = \ii N$ and $\langle \chi, \chi\rangle = \ii N$ using 
the definitions~\eqref{def:supertrace}--\eqref{def:superinverse} after expanding the inverse of the  matrix $1+ \frac{\ii}{N} \Phi^*\Phi$
in the Grassmannian variables but this pole is far away from the integration domain on both sides of~\eqref{eq cplx superbosonization}.
We now outline a standard approximation procedure to verify the superbosonisation formula for such meromorphic functions; for simplicity
we consider only our concrete function from~\eqref{susyused}.
 
In the first step notice that the integration at infinity on the non-compact domain for the boson-boson variable is absolutely convergent on both sides
as guaranteed by the $\exp( \ii w\sTr \Phi^\ast\Phi)$ regularization, since $\Im w>0$.

Second, in the LHS of~\eqref{eq cplx superbosonization} using Taylor expansions, we expand $F$  into a finite polynomial
in the  Grassmannian variables with meromorphic coefficient
functions in the variable  $\langle s, s\rangle$. Algebraically, we perform  exactly the same expansion in the RHS of~\eqref{eq cplx superbosonization}.
For the fermionic variables $\sigma, \tau$ these expansions naturally terminate after finitely many terms. From the 
formulas~\eqref{H-z cplx superbos} it is clear that 
only  the geometric expansion $(1+y)^{-1} = 1- y + y^2 - \ldots$ may result in an  infinite power series instead of a finite polynomial.
However, owing  to the contour integral in $y$ and that the integrand has a pole of at most finite order ($\approx N$) at zero,
we may replace this power series with its  finite truncation without changing the value of the RHS of~\eqref{eq cplx superbosonization}.
We choose the order of truncation sufficiently large that the remaining formula contains all non-zero terms on both sides.
We denote this new truncated function by $\widetilde F$.

Now we are in the situation where on both sides of~\eqref{eq cplx superbosonization}, with $\widetilde F$ replacing $F$, we have the same
 finite polynomial in the variables $\langle s, \chi\rangle, \langle \chi, s\rangle$ and $\langle \chi, \chi\rangle$ in the LHS as 
 in the variables  $\sigma, \tau, y$ in the RHS,
 with coefficients that are meromorphic in $\langle s, s\rangle$, resp. in $x$. All coefficient functions $h_k(x)$
 are analytic in a neighborhood of the positive real axis (their possible pole is at $-1$) and they
 have an exponential decay
 $\approx \exp{(-(\Im w) \langle s, s\rangle)}$ in the LHS, resp. $\exp{(-(\Im w) x)}$ in the RHS, at infinity from the regularization
 observed in the first step.
  
 Finally, in the third step, dropping the $k$ index temporarily, we write each coefficient function  as $h(x)= g(x) e^{- \alpha x}$ with $\alpha =\frac{1}{2}\Im w$.
 For any given $\epsilon>0$
 we approximate $g(x)$ via classical (rescaled) Laguerre polynomials $p_{n}(x)$ of degree $n$ with weight function $e^{- \alpha x}$ such that 
 $\int_0^\infty | g(x)- p_n(x)|^2 e^{- \alpha x} \diff x \le (\epsilon \alpha)^2$, where $n$ depends on $\epsilon$ and $\Im w$. By completeness
 of the Laguerre polynomials in $L^2(\mathbb{R}_+,  e^{- \alpha x} \diff x)$
 and by $\int |g(x)|^2 e^{- \alpha x} \diff x = \int |h(x)|^2 e^{\alpha x} \diff x<\infty$ such approximating polynomial exists.
 Therefore, with a Schwarz inequality, we have
 $$
    \int_0^\infty\big| h(x) - p_n (x) e^{-\alpha x}\big| \diff x=  \int_0^\infty | g(x)- p_n(x)| e^{- \alpha x} \diff x \le \epsilon.
 $$
 Since there are only finitely many coefficient functions $h(x)=h_k(x)$ in $\widetilde F$,
 we can replace each of them with an entire function (namely with a polynomial
 times $e^{-\alpha x}$) with at most an $\epsilon$ error in the RHS of~\eqref{eq cplx superbosonization}. The same estimates
 hold on the LHS.  But for these replacements the superbosonisation formula~\cite[Eq.~(1.10)]{MR2430637} is applicable since 
 the new functions are entire. The error is at most $\epsilon$ on both sides, but this argument is valid for arbitrary $\epsilon>0$.
 This proves the superbosonisation formula for the function~\eqref{susyused}.

\section{Explicit formulas for the real symmetric integral representation}\label{appendix poly}
Here we collect the explicit formulas for the polynomials of \(a,\xi,\tau\) in the definition of \(G_N\) in~\eqref{realsusyexplAAr}.
\[ \begin{split}
p_{2,0,0}&\defeq a^4 \tau ^2+2 a^3 \xi  \tau +4 a^3 \tau -a^2 \xi ^2 \tau +4 a^2 \xi ^2+8 a^2 \xi +2 a^2 \tau \\
&\qquad +4 a^2+2 a \xi ^3+8 a \xi ^2+10 a \xi +4 a+\xi ^4+4 \xi ^3+6 \xi ^2+4 \xi +1   \\
p_{1,0,0}&\defeq - a^4 \xi \tau ^2+a^4 \tau ^2-2 a^3 \xi ^2 \tau -2 a^3 \xi  \tau +4 a^3 \tau -a^2 \xi ^3 \tau -3 a^2 \xi ^2 \tau \\
&\qquad -2 a^2 \xi  \tau +4 a^2 \xi +2 a^2 \tau +4 a^2+2 a \xi ^2+6 a \xi +4 a+\xi ^3+3 \xi ^2+3 \xi +1\\
p_{2,2,0}&\defeq 4 (a+1) \left(a^2 \tau +a \xi  \tau +2 a \tau +\xi ^2+2 \xi +1\right) \\
p_{1,2,0}&\defeq 4 (a+1)  \left(a^2 \tau +a \xi  \tau +2 a \tau +\xi +1\right) \\
p_{2,0,1}&\defeq 2  \bigl(a^3 \tau ^2+2 a^2 \xi  \tau +4 a^2 \tau +2 a \xi ^2+2 a \xi  \tau  \\
&\qquad\qquad+4 a \xi +3 a \tau +2 a+\xi ^3+4 \xi ^2+5 \xi +2\bigr)\\
p_{1,0,1}&\defeq 2 \bigl(a^3 \tau ^2+2 a^2 \xi  \tau +4 a^2 \tau +a \xi ^2 \tau +3 a \xi  \tau \\
&\qquad\qquad+2 a \xi +3 a \tau  +2 a+\xi ^2+3 \xi +2\bigr) \\
 p_{2,2,1}&\defeq 4 (a+1)  (a+\xi +2) \\
 p_{2,0,2}&\defeq a^2 \tau +2 a \xi +4 a+\xi ^2+4 \xi +4 
\end{split} \]

\section{Comparison with the contour-integral derivation}\label{app}
\begin{figure}
    \centering
    \setlength{\figurewidth}{.8\linewidth}
    \setlength{\figureheight}{.4\linewidth}
    \pgfplotsset{xmin=0.001, xmax=15, ymin=0, ymax=.45}
\begin{tikzpicture}

\begin{axis}[%
width=0.951\figurewidth,
height=\figureheight,
at={(0\figurewidth,0\figureheight)},
scale only axis,
xmin=0.001,
xmax=15,
ymin=0.001,
ymax=.45,
axis background/.style={fill=white},
legend style={legend cell align=left, align=left, draw=white!15!black},
samples=100
]

\addplot [color=black, dotted,thick,domain=.1:15] {sqrt(3)*x^(-1/3)/(2*pi)};
\addlegendentry{\(\sqrt{3}\lambda^{-1/3}/2\pi\)};

\addplot [color=black, dashed,thick,domain=0:2] { (1- x^2/4)/sqrt(2*pi) };
\addlegendentry{\((1-\lambda^2/4)/\sqrt{2\pi}\)};

\addplot [color=black,thick]
  table[row sep=crcr]{%
0.00316954644277246	0.398941278937307\\
0.0712327891289343	0.398451109655666\\
0.151515151515152	0.396794311332204\\
0.229171816706073	0.394188520756326\\
0.303030303030303	0.390890793049353\\
0.375806063146358	0.386944076552055\\
0.454545454545455	0.381983806091781\\
0.536387286115301	0.376164224438987\\
0.606060606060606	0.370748711456848\\
0.688727728456924	0.3638551079348\\
0.757575757575758	0.357786382408501\\
0.829132331411569	0.351218389555917\\
0.909090909090909	0.343626086977689\\
0.980394150934863	0.336681560794617\\
1.06060606060606	0.328728891790419\\
1.1433466978198	0.3204272465716\\
1.21212121212121	0.313491474305574\\
1.29167188203165	0.30547074770385\\
1.36363636363636	0.298250242801581\\
1.44170772990087	0.290489276952188\\
1.51515151515152	0.283285641669316\\
1.58863630660007	0.276197615930242\\
1.66666666666667	0.268826610938283\\
1.74085891750134	0.261987626855268\\
1.81818181818182	0.255055080823846\\
1.89530290356778	0.248355737329479\\
1.96969696969697	0.242110450528843\\
2.04981469590853	0.235635770734964\\
2.12121212121212	0.230094015797155\\
2.19411210751799	0.224664234722613\\
2.27272727272727	0.21907327648684\\
2.34738705833692	0.214021559063475\\
2.42424242424242	0.209086075682618\\
2.49847254571465	0.204574942531612\\
2.57575757575758	0.200144578609279\\
2.65534067859418	0.195864432518707\\
2.72727272727273	0.192239029486012\\
2.79794707114122	0.188898476791331\\
2.87878787878788	0.185341279819627\\
2.95662137660303	0.182176146437653\\
3.03030303030303	0.179408074021196\\
3.08628945872237	0.177449008423168\\
3.18181818181818	0.17438408538788\\
3.25461307958797	0.172274964542054\\
3.33333333333333	0.170204690430444\\
3.41507276735304	0.168275823830153\\
3.48484848484848	0.166798487091054\\
3.56186167916055	0.16533932452182\\
3.63636363636364	0.164089547545964\\
3.70840974501459	0.163023381011652\\
3.78787878787879	0.161999435290446\\
3.8700015060841	0.161097119828106\\
3.93939393939394	0.160448957127268\\
4.01006658699306	0.159888195628167\\
4.09090909090909	0.159359693331392\\
4.16633616421699	0.158965578634425\\
4.24242424242424	0.15865530197137\\
4.32450007058867	0.158408398931878\\
4.39393939393939	0.158262601745023\\
4.46561926103384	0.158165398717483\\
4.54545454545455	0.158112463974901\\
4.62176771571574	0.158108151247447\\
4.6969696969697	0.158140516673681\\
4.76367692628445	0.158193924887049\\
4.84848484848485	0.158287678628666\\
4.92654327766151	0.158392272189544\\
5	0.158500536458296\\
5.07767575564152	0.158619467308257\\
5.15151515151515	0.158731582666969\\
5.22119388375511	0.158832297216841\\
5.3030303030303	0.158939325832757\\
5.3892618372432	0.159033343922643\\
5.45454545454545	0.159088294561074\\
5.52736959969308	0.159130063734969\\
5.60606060606061	0.159148934566333\\
5.68332019653026	0.159137962540558\\
5.75757575757576	0.159097431459465\\
5.81099920989544	0.159048943029498\\
5.90909090909091	0.158915458439833\\
5.97554797805416	0.15879092851942\\
6.06060606060606	0.158589866730421\\
6.11926897547302	0.158423309259289\\
6.21212121212121	0.15811232738878\\
6.29218396051879	0.157797137503193\\
6.36363636363636	0.157478942920267\\
6.44157299053722	0.157092330974256\\
6.51515151515152	0.156689823478182\\
6.58990910186298	0.156244103232718\\
6.66666666666667	0.155748656072247\\
6.73420850091908	0.155281797371933\\
6.81818181818182	0.154662257676725\\
6.89348082544772	0.154071173092174\\
6.96969696969697	0.153440131030522\\
7.03933523743575	0.152836057926373\\
7.12121212121212	0.152094025017038\\
7.19545064115951	0.151393262686087\\
7.27272727272727	0.150637504602817\\
7.34339227133163	0.149924608884336\\
7.42424242424242	0.149085539059313\\
7.50108120084363	0.148267080480128\\
7.57575757575758	0.147454108720346\\
7.65071204886719	0.146622757004281\\
7.72727272727273	0.145759835993522\\
7.80939240518357	0.144821285255218\\
7.87878787878788	0.144019640993714\\
7.95543721323693	0.143127197370219\\
8.03030303030303	0.142250430612226\\
8.11294423821129	0.141279176098976\\
8.18181818181818	0.140468811767713\\
8.26081035043517	0.13954043182483\\
8.33333333333333	0.138690839040409\\
8.41689753256178	0.137717402830575\\
8.48484848484848	0.13693179338009\\
8.55330427860404	0.136147126360418\\
8.63636363636364	0.135205995676804\\
8.71267533112345	0.13435358877737\\
8.78787878787879	0.133526626211791\\
8.86192057589164	0.132726506303954\\
8.93939393939394	0.131905639529627\\
9.0035593293496	0.131239389573767\\
9.09090909090909	0.130353622333005\\
9.16185749874208	0.129653232848459\\
9.24242424242424	0.128879743421511\\
9.30650316440612	0.128281843800002\\
9.39393939393939	0.127491703448038\\
9.46747152562203	0.126850921280846\\
9.54545454545454	0.12619571569279\\
9.62693925906048	0.12553851310968\\
9.6969696969697	0.124996508819436\\
9.76222563941117	0.124510707076459\\
9.84848484848485	0.123897349238282\\
9.9106642581695	0.123475702060891\\
10	0.122900083687193\\
10.0798773341579	0.122415570547981\\
10.1515151515152	0.122005197641113\\
10.2270909393546	0.121596895910065\\
10.3030303030303	0.121211888310146\\
10.3768301258959	0.120861712987609\\
10.4545454545455	0.120518149954577\\
10.5270167682438	0.120220660643031\\
10.6060606060606	0.11992086942707\\
10.6778773962474	0.11967030865688\\
10.7575757575758	0.119415930527681\\
10.8419649882792	0.119172914737907\\
10.9090909090909	0.118998321423357\\
10.9798833581984	0.118831529958095\\
11.0606060606061	0.118662253110464\\
11.1356550447285	0.118524022767733\\
11.2121212121212	0.118401272930652\\
11.2840220541474	0.118301695217276\\
11.3636363636364	0.118208383906229\\
11.4402515124343	0.118134421402639\\
11.5151515151515	0.118076198336847\\
11.5979388056712	0.118026771317847\\
11.6666666666667	0.117996870731738\\
11.7423583154329	0.117974613922126\\
11.8181818181818	0.117962574352456\\
11.8947727225492	0.117959807599018\\
11.969696969697	0.117965243249949\\
12.031526201574	0.117975100932561\\
12.1212121212121	0.117996859657218\\
12.2015864310618	0.118022665581779\\
12.2727272727273	0.118049510202769\\
12.3532828107084	0.118083405051691\\
12.4242424242424	0.118115477026102\\
12.5026289554529	0.118152367697521\\
12.5757575757576	0.118187318718556\\
12.6488367044864	0.118221956964656\\
12.7272727272727	0.118257943936587\\
12.7972487546594	0.118288284422388\\
12.8787878787879	0.118320676267848\\
12.9537058684846	0.118346852865196\\
13.030303030303	0.118369311490737\\
13.1049810786162	0.118386337245938\\
13.1818181818182	0.118398153245947\\
13.2586908469643	0.118403576622013\\
13.3333333333333	0.11840211025426\\
13.4070874158504	0.118393632109291\\
13.4848484848485	0.118376614527203\\
13.5590910758182	0.118352160954669\\
13.6363636363636	0.118317753650871\\
13.7110753804269	0.118275414171149\\
13.7878787878788	0.118222230248301\\
13.866281099138	0.118157505707488\\
13.9393939393939	0.118087383647547\\
14.008118774177	0.11801268530589\\
14.0909090909091	0.117911182755464\\
14.1681731642486	0.117804931262157\\
14.2424242424242	0.117692219857971\\
14.3270300728278	0.117551025654636\\
14.3939393939394	0.117429699145836\\
14.4696352776963	0.117282142611324\\
14.5454545454545	0.117123414438317\\
14.6295440598239	0.116934654882497\\
14.6969696969697	0.116773723599187\\
14.7718205248304	0.11658520707094\\
14.8484848484848	0.116381519872966\\
14.939248476034	0.116126770297237\\
15	0.115948194145057\\
};
\addlegendentry{\(K(\lambda,\lambda)\)}

\end{axis}
\end{tikzpicture}
\end{figure}
In~\cite{MR2162782} the correlation kernel of \((X-z)(X-z)^\ast\) for complex Ginibre matrices \(X\) has been derived using contour-integral methods. Earlier, the joint eigenvalue density for the general \emph{Laguerre ensemble} had been obtained in the physics literature~\cite{MR1419177,MR1413913} via supersymmetric methods, see also~\cite{10053982} with orthogonal polynomials. Adapting~\cite{MR2162782} to our scaling, and choosing \(y_i=\pm1\), it follows from~\cite[Theorem~7.1]{MR2162782} that for \(\abs{z}=1\) the rescaled kernel \(K_N(N^{-3/2}\lambda,N^{-3/2}\mu)\) is given by 
\begin{equation}\label{peche kernel} 
\begin{split} 
    \frac{N^3}{\ii \pi}\int_\Gamma \diff x\int_\gamma \diff y K_B(2N^{1/4} x\sqrt{\lambda},2N^{1/4} y\sqrt{\mu}) e^{N(h(y)-h(x))}\Bigl(1-\frac{1}{1-x^2}\frac{1}{1-y^2}\Bigr)xy,\\
    K_B(x,y)\defeq\frac{x I_1(x)I_0(y)-y I_0(x)I_1(y)}{x^2-y^2},\quad h(x)=x^2+\log(1-x^2)=-\frac{x^4}{2}+\landauO{x^5},
\end{split}
\end{equation}
where \(I_0,I_1\) are the modified Bessel function of \(0\)-th and \(1\)-st kind. The contour \(\Gamma\) is any contour encircling \([-1,1]\) in a counter-clockwise direction (in contradiction to the contours depicted in~\cite[Figure 8.1]{MR2162782}) and the contour \(\gamma\) is composed of two straight half-lines \([0,\ii\infty)\) and \([0,-\ii\infty)\). The main contribution in~\eqref{peche kernel} comes from the \(\abs{x}\lesssim N^{-1/4}\) and \(\abs{y}\lesssim N^{-1/4}\) regime which motivates the change of variables \(x\mapsto N^{-1/4}x,y\mapsto N^{-1/4}y\). Together with the expansion of \(1-(1-x^2)^{-1}(1-y^{2})^{-1}=-x^2-y^2+\landauO{x^4+y^4}\) it follows that
\[K_N(N^{-3/2}\lambda,N^{-3/2}\mu)\approx N^{3/2} K(\lambda,\mu), \]
where 
\[K(\lambda,\mu) \defeq \frac{\ii}{\pi} \int_{\Gamma'}\diff x\int_{\gamma} \diff y K_B(2x\sqrt{\lambda},2y\sqrt{\mu}) e^{x^4/2-y^4/2} xy(x^2+y^2) \]  
and \(\Gamma'\) consists of four straight half-lines \((e^{\ii\pi/4}\infty,0]\), \([0,e^{3\ii\pi/4}\infty)\), \((e^{5\ii\pi/4}\infty,0]\), \([0,e^{7\ii\pi/4}\infty)\). 

We now compare the limiting \(1\)-point function \(K(\lambda,\lambda)\) with the asymptotic expansion we derived in Theorem~\ref{thm cplx}, which in the case \(\abs{z}=1\), i.e.~\(\delta=0\), simplifies to 
\[q_0(\lambda)=\frac{\lambda^{1/3}}{2\pi\ii}\int \diff x\oint \diff y e^{\lambda^{2/3}\bigl(- y +1/(2y^2) + x -1/(2x^2)\bigr)  }\Bigl(\frac{1}{x^3}+\frac{1}{x^2 y}+\frac{1}{xy^2}\Bigr).\]
The resulting \(1\)-point function, given by \(\pi^{-1}\Im q_0(\lambda)\), coincides precisely with \(K(\lambda,\lambda)\) and is plotted in Figure~\ref{fig 1pt}.

\printbibliography%
\end{document}